\newtheorem{defi}{Definition}[section] 
\newtheorem{theo}[defi]{Theorem}
\newtheorem{theorem}[defi]{Theorem}
\newtheorem{coro}[defi]{Corollary} 
\newtheorem{lemme}[defi]{Lemma}
\newtheorem{prop}[defi]{Proposition}
\theoremstyle{remark}
\newcommand{\R}{\mathbb{R}} 
\newcommand{\Q}{\mathbb{Q}}
\newcommand{\N}{\mathbb{N}} 
\newcommand{\e}{\varepsilon}
\newcommand{\p}{\varphi}
\newcommand{\PP}{\mathcal{P}}
\newcommand{\intoo}[2]{\mathopen{]}#1\,,#2\mathclose{[}}
\newcommand{\intff}[2]{\mathopen{[}#1\,,#2\mathclose{]}}
\newcommand{\intfo}[2]{\mathopen{[}#1\,,#2\mathclose{[}}
\renewcommand{\tilde}{\widetilde}
\title{Attractors in spacetimes and time functions} 
\author{Daniel Monclair}
\thanks{Project supported by the National Research Fund, Luxembourg.}
\begin{document}

\begin{abstract}
We develop a new approach to the existence of time functions on Lorentzian manifolds, based on Conley's work regarding Lyapunov functions for dynamical systems. We recover Hawking's result that a stably causal admits a time function through a more general result giving the existence of a continuous function that is non decreasing along all future directed causal curves, and increasing along such curves that lie outside a special region of the spacetime, called the chain recurrent set, which is empty for stably causal spacetimes.\\
The construction is based on a notion of attractive sets in spacetimes.
\end{abstract}

\maketitle

\setcounter{tocdepth}{1}
\tableofcontents
\section{Introduction}

This paper is devoted to the relationship between causality theory and dynamical systems, with an emphasis on using dynamical techniques to prove Lorentzian results. One of the most striking similarities is the notion of time functions for Lorentzian manifolds and Lyapunov functions for dynamical systems.\\
\indent Causality theory is the study of the relationship between points of a Lorentzian manifold that are linked by causal trajectories. One of the classic problems in causality theory is the existence of a (global) time function, i.e. a continuous real-valued function that increases along future directed causal curves. This is now well understood: the first existence result was Geroch's Theorem for globally hyperbolic spacetimes \cite{geroch}, then Hawking \cite{hawking} showed that the existence of a time function is equivalent to stable causality (i.e. no nearby metrics have closed causal curves). It is well known that there was a gap in Hawking's proof, as it uses the existence of a temporal function (a smooth function with timelike gradient) in order to assure stable causality. The smoothness problem was only solved in 2005 by Bernal and Sanchez \cite{BS_fonction_lisse}. Other successful techniques for the existence of temporal functions have since been developed (see \cite{fathi_siconolfi}, \cite{chrusciel}, \cite{muller_sanchez}, \cite{suhr}, \cite{minguzzi}).\\
\indent In classical dynamical systems, a Lyapunov function for a flow is a continuous function that is non increasing along orbits (physically, it can be thought of as the mechanical energy). In general, one does not look for a Lyapunov function that is decreasing along all orbits (as they only exist for very simple dynamics, which never occurs on a compact manifold). Instead, the goal is to understand the orbits on which such a function is necessarily constant, and to find an "optimal" Lyapunov function: one that is constant along these orbits, and decreasing along all others. This was achieved by Conley, who linked the existence of such a function to two important concepts in topological dynamics: chain recurrence and attractive sets.\\

\indent Our goal in this paper is to use techniques that are close to the ones developed by Conley \cite{conley} in order to construct time functions. We will define the chain recurrent set $R(g)\subset M$ of a Lorentzian manifold $(M,g)$ (see Definition \ref{def_chain_rec}), and get a result that is similar to Conley's.\\
\indent Given a point $p\in M$, we denote by $J^+(p)$ the \textbf{causal future} of $p$, i.e. the set of endpoints of future directed causal curves starting at $p$ (see section \ref{sec:causality} for basic definitions of causality theory).

\begin{theo} \label{chain_recurrent_partial_time} Let $(M,g)$ be a time oriented Lorentzian manifold. There is a continuous function $\tau :M\to\R$ such that:
\begin{itemize}
\item For all $p\in M$ and $q\in J^+(p)$, $f(q)\ge f(p)$,
\item For all $p\notin R(g)$ and $q\in J^+(p)\setminus\{p\}$, $f(q)>f(p)$.
\end{itemize}
\end{theo}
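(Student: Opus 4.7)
The plan is to transpose Conley's complete Lyapunov function theorem into the Lorentzian causality setting, under the dictionary ``flow $\leftrightarrow$ family of future directed causal curves, Lyapunov function $\leftrightarrow$ time function, attractor $\leftrightarrow$ attractive set in a spacetime''. As an organising principle I would first establish the Lorentzian analogue of Conley's fundamental identity
\[
R(g) \;=\; \bigcap_{A} \bigl(A \cup A^{*}\bigr),
\]
where $A$ ranges over the attractive sets of $(M,g)$ and $A^{*}$ denotes a suitably defined dual repeller. This identity turns the strict monotonicity condition at a point $p\notin R(g)$ into the existence of at least one attractor $A$ with $p\notin A\cup A^{*}$.

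Next, for each attractive set $A$ I would build a continuous ``elementary'' Lyapunov function $\tau_A\colon M\to [0,1]$ satisfying: $\tau_A\equiv 0$ on $A^{*}$, $\tau_A\equiv 1$ on $A$, $\tau_A(q)\ge \tau_A(p)$ for every $q\in J^{+}(p)$, and the inequality is strict whenever the causal segment from $p$ to $q$ passes through $M\setminus(A\cup A^{*})$. The natural way to enforce causal monotonicity is to start from a preliminary separating function (for instance a normalised distance to $A^{*}$ compared with a distance to the complement of the basin) and then regularise it by a sup over past-causal curves, that is
\[
\tau_A(p) \;=\; \sup\bigl\{\,\varphi_A(q)\;\big|\; q\in J^{-}(p)\bigr\},
\]
for an appropriate $\varphi_A$. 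Continuity of $\tau_A$ should follow from the openness of $I^{\pm}$ and the behaviour of $J^{\pm}$ near the closed sets $A$, $A^{*}$; strict monotonicity outside $A\cup A^{*}$ should follow from the fact that $\varphi_A$ can be chosen strictly larger on chronologically later points inside the relevant ``transit region''.

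Finally, by second countability of $M$ I would extract a countable family $(A_n)_{n\in\N}$ of attractive sets such that, for every $p\notin R(g)$, there exists $n$ with $p\notin A_n\cup A_n^{*}$; the existence of such a family is exactly where the identity from the first step is used. One then sets
\[
\tau \;=\; \sum_{n\in\N} 2^{-n}\,\tau_{A_n}.
\]
Uniform convergence yields continuity, the monotonicity $\tau(q)\ge \tau(p)$ for $q\in J^{+}(p)$ is preserved under positive sums, and for $p\notin R(g)$ and any $q\in J^{+}(p)\setminus\{p\}$ the chosen index $n$ provides a summand strictly increasing from $p$ to $q$, giving $\tau(q)>\tau(p)$.

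The principal obstacle will be the construction of the elementary functions $\tau_A$. Unlike in Conley's deterministic flow setting, there is no canonical ``time spent in the basin'' because causal curves branch and can be reparametrised arbitrarily; one has only the partial order induced by $J^{+}$. Guaranteeing simultaneously continuity on all of $M$ and strict increase along every non-degenerate causal segment crossing the region $M\setminus(A\cup A^{*})$ will require a delicate use of the local structure of causal diamonds together with the closedness and forward/backward invariance properties that the paper will need to impose on attractive sets.
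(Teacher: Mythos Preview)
Your high-level architecture matches the paper: one shows that every $p\notin R(g)$ lies in $B(A)\setminus A$ for some attractor $A$ (the paper's Proposition~\ref{chains_attractors}, which is one inclusion of your Conley identity, and the only one needed), builds for each attractor an elementary $B(A)\setminus A$-time function $\tau_A$, extracts a countable family by Lindel\"of, and forms $\sum 2^{-n}\tau_{A_n}$. The paper works with the pair $(A,B(A))$ rather than $(A,A^*)$, but since in the non-compact setting the dual repeller is essentially $M\setminus B(A)$, this is only a difference of language.

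The genuine gap is in the construction of $\tau_A$. Your ansatz $\tau_A(p)=\sup\{\varphi_A(q):q\in J^-(p)\}$ is non-decreasing along future causal curves, but it will \emph{not} be strictly increasing on $B(A)\setminus A$: if the supremum is approached at points far in the common past of $p$ and of $q\in J^+(p)\setminus\{p\}$, one gets $\tau_A(p)=\tau_A(q)$, and nothing in the causal order alone prevents this. The paper's key device, absent from your proposal, is an auxiliary \emph{adapted} Riemannian metric $h$ (Section~\ref{sec:adapted_metrics}) giving causal curves a meaningful arc length. One then defines length-restricted futures $J^+_t(x)$ and a whole one-parameter family $g_t(x)=\sup_{J^+_t(x)} f$; strict monotonicity (Proposition~\ref{decreasing}) comes from locating an open interval of parameters $t$ on which $g_t(y)<g_t(x)$, and $\tau_A=\sum_{q\in\Q_+^*}u_q(1-g_q)$. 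Continuity is equally dependent on this machinery: the paper spends Sections~\ref{sec:adapted_metrics}--\ref{sec:continuity_future} proving that $p\mapsto\overline{J^+_{t,T}(p)}$ is Hausdorff-continuous, which is what makes each $g_t$ continuous. Your hope that continuity follows from openness of $I^\pm$ and the behaviour of $J^\pm$ does not survive: in general $J^\pm(p)$ is neither open nor closed and does not vary continuously with $p$; the length cutoff together with the adapted metric is precisely the tool that restores a continuous set-valued future.
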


In the stably causal case, we get a new construction of time functions.

\begin{theo} \label{stable_no_chains} If $(M,g)$ is stably causal, then $R(g)=\emptyset$. \end{theo}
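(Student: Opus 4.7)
Proof proposal. The plan is to use stable causality to exhibit a Lorentzian metric $g' \succ g$ with strictly wider light cones and no closed causal curves, and then to show that any $p \in R(g)$ would produce exactly such a closed curve, contradicting the choice of $g'$.

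Recall that, by definition, stable causality of $(M,g)$ provides a Lorentzian metric $g'$ on $M$ with $g \prec g'$ (every nonzero $g$-causal tangent vector is $g'$-timelike) such that $(M,g')$ is causal, i.e.\ has no closed causal curves. Anticipating Definition \ref{def_chain_rec}, chain recurrence at a point $p$ is expressed by the existence, for arbitrarily fine scale $\epsilon > 0$, of a closed \emph{$\epsilon$-chain} based at $p$: a finite concatenation of future-directed $g$-causal arcs glued by $\epsilon$-small perturbations — either $\epsilon$-jumps measured in a fixed auxiliary Riemannian metric, or short timelike arcs for metrics $g^{\epsilon}$ slightly wider than $g$, whichever formulation is adopted in \ref{def_chain_rec}.

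The argument proceeds by converting a sufficiently fine chain into an actual $g'$-causal closed curve. Every $g$-causal segment is automatically $g'$-timelike because $g \prec g'$. For the perturbations, one uses that $g \prec g'$ is an open condition on cones: for any compact set $K \subset M$, there exists $\epsilon_K > 0$ such that, for $\epsilon < \epsilon_K$, any perturbation of scale $\epsilon$ between points of $K$ can be realized by a genuine $g'$-timelike curve contained in a normal convex neighborhood (not merely by an admissible tangent vector). Given $p \in R(g)$, I would pick one closed $\epsilon$-chain at $p$, let $K$ be a compact neighborhood containing it, choose $\epsilon < \epsilon_K$, and invoke chain recurrence a second time to get a closed $\epsilon$-chain at $p$ inside $K$; replacing each perturbation by a $g'$-timelike connector and keeping the causal arcs as they are produces a closed future-directed $g'$-causal curve through $p$, contradicting causality of $(M,g')$.

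The main obstacle is the uniformity of the perturbation-to-curve conversion. One must quantify $g \prec g'$ uniformly on $K$ and align this quantification with whatever notion of $\epsilon$-perturbation appears in Definition \ref{def_chain_rec}. This is a standard openness/compactness argument: cover $K$ by finitely many normal convex neighborhoods in which the $g'$-cone strictly contains the closed $g$-cone, and take $\epsilon_K$ adapted to a Lebesgue number of the cover. Once the precise definition is in hand this step should be routine, and the rest of the proof is immediate from $g \prec g'$.
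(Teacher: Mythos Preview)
Your direct approach is genuinely different from the paper's, and it has a real gap.

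\textbf{What the paper does.} The paper does not argue directly. It first shows (Proposition~\ref{stable_attractors} together with Corollary~\ref{all_attractors}) that a stably causal spacetime carries a time function, and then invokes Theorem~\ref{time_no_chains}, which says that the existence of a time function forces $R(g)=\emptyset$. So Theorem~\ref{stable_no_chains} is obtained as a corollary of the whole machinery of attractors and $E$-time functions, not by a direct cone-widening argument.

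\textbf{The gap in your argument.} Your compactness step does not go through. You pick a first chain, enclose it in a compact $K$, find $\e_K$, and then ``invoke chain recurrence a second time to get a closed $\e$-chain at $p$ inside $K$''. Nothing in the definition forces the second, finer chain to remain in $K$: the $g$-causal arcs may wander arbitrarily far, and then your uniform bound $\e_K$ is useless at the jump points. This is exactly the non-compactness difficulty that the paper (following Hurley) addresses by letting $\e$ be an element of $\PP(M)$, i.e.\ a positive continuous \emph{function} on $M$, not a constant. You treat $\e$ as a number throughout, which misreads Definition~\ref{def_chain_rec}.

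\textbf{A second subtlety.} Even locally, you cannot simply ``replace each perturbation by a $g'$-timelike connector'': the jump from $\gamma_i(1)$ to $\gamma_{i+1}(0)$ is only metrically small and may point in a $g'$-spacelike or past direction. What one can do is absorb the jump into the preceding long $g$-causal arc: since $g\prec g'$, the endpoint $\gamma_i(1)$ lies in $I^+_{g'}(\gamma_i(0))$, and by openness so does any sufficiently close $\gamma_{i+1}(0)$. Making this uniform requires choosing, for every $q\in M$, a threshold $\e(q)>0$ depending on the local cone gap and the minimal length $T$, and then assembling a continuous $\e\in\PP(M)$. That is not the ``standard openness/compactness argument'' you sketch, and it is precisely the kind of non-compact refinement the paper builds its whole framework to handle. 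If you want to salvage the direct route, this is where the work lies.
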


An interesting difference between this construction and the classical one using volumes of past sets is that the continuity of the time function does not depend on the causal structure, which is the case in Hawking's construction \cite{hawking} (which is itself a variation of the construction of Geroch \cite{geroch}).

\subsection{Lorentzian conformal classes as multi-valued dynamical systems}  If $(M,g)$ is a time oriented Lorentz manifold,  let $C(p)$ be the set of future oriented causal vectors tangent at $p\in M$ (these definitions are recalled in section \ref{sec:causality}). Integral curves of $C$ (i.e. Lipschitz curves $\gamma$ such that $\dot\gamma(t)\in C(\gamma(t))$ a.e.) are exactly future directed causal curves (in the topological sense). The data of $C$ is equivalent to the data of the conformal structure $[g]$ (two non degenerate quadratic forms of non definite signature share the same isotropic cone if and only if they differ by a multiplicative constant). \\
\indent Starting from a point $p\in M$, the set of endpoints of integral curves starting at $p$ is the \textbf{causal future} $J^+(p)$.\\
\indent If we compare this situation to the classical setting of a vector field on a manifold, we realise that we are missing something: special parametrisations of integral curves. Since we consider all causal curves, as opposed to only timelike curves, we cannot use Lorentzian arc length. For this purpose, we will always consider an auxiliary Riemannian metric $h$ on $M$, and consider the set $J^+_{t,T}(p)$ of endpoints  $\gamma(1)$ of future directed causal curves $\gamma :  \intff{0}{1} \rightarrow M$ such that $\gamma(0)=p$ and $t< \ell_h(\gamma)< T$, for some fixed $0<t<T$ and $p\in M$ (here $\ell_h$ denotes the Riemannian length with respect to $h$).

%and denote by $J^+_t(p)$ the subset of $M$ consisting of endpoints $\gamma(1)$ of future directed causal curves $\gamma :  \intff{0}{1} \rightarrow M$ such that $\gamma(0)=p$ and $\ell_h(\gamma)> t$.\\ 
%\indent When there is an ambiguity about the Riemannian metric, we will call this set $J^+_{t,h}(E)$.  Let us remark that this definition depends strongly on the auxiliary Riemmanian metric, and so will the definitions to come.  With a little more work and some slightly different definitions, some of these notions would not depend on such a choice (if we replace constants $T$ by positive continuous functions then we avoid the scale problem), but it is not necessary for the applications that we propose.  As we will see later, it will be important to choose a Riemannian metric with some special properties.\\
%\indent If $E$ is a subset of $M$, then we let $J^+_t(E)=\bigcup_{x\in E}J^+_t(x)$. If $0<t<T$, we will also consider the set $J^+_{t,T}(x)$ of endpoints  $\gamma(1)$ of future directed causal curves $\gamma :  \intff{0}{1} \rightarrow M$ such that $\gamma(0)=x$ and $t< \ell_h(\gamma)< T$.\\
The aim of sections \ref{sec:adapted_metrics} and \ref{sec:continuity_future} is to prove the following continuity result.

\begin{theorem} \label{theo:continuity_future}
Let $(M,g)$ be a spacetime. There is a complete Riemannian metric $h$ on $M$ such that the map $\overline{J^+_{t,T}}:M\to \mathrm{Comp}(M)$ is continuous, where $\mathrm{Comp}(M)$ is the set of compact subsets of $M$ endowed with the Hausdorff topology.
\end{theorem}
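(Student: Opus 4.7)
The plan is to exploit the adapted metric $h$ built in Section~\ref{sec:adapted_metrics}. Since $h$ is complete, Hopf--Rinow makes every closed ball $\overline{B_h(p,T)}$ compact; any future directed causal curve of $h$-length at most $T$ starting at $p$ stays in that ball, so $\overline{J^+_{t,T}(p)}$ is automatically compact and the map $\overline{J^+_{t,T}}\colon M \to \mathrm{Comp}(M)$ is well defined. To prove continuity for the Hausdorff topology I would verify upper and lower semicontinuity separately.

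For upper semicontinuity, let $p_n \to p$ and $q_n \in \overline{J^+_{t,T}(p_n)}$ with $q_n\to q$. After a diagonal extraction one can assume $q_n = \gamma_n(1)$ for a future directed causal curve $\gamma_n$ from $p_n$ with $t < \ell_h(\gamma_n) < T$. Reparametrising by $h$-arclength places the $\gamma_n$ inside a common compact ball with uniform Lipschitz bound, so a standard limit curve argument yields a subsequence converging uniformly to a future directed causal curve $\gamma$ from $p$ to $q$ with $\ell_h(\gamma)\in[t,T]$. If $\ell_h(\gamma)$ hits an endpoint (the only non-automatic case), I would either extend $\gamma$ slightly beyond $q$ along a short future directed causal arc (available since $C$ is nonempty at every point), or truncate it just before $q$, to realise $q$ as a limit of endpoints of curves with length strictly in $(t,T)$.

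For lower semicontinuity, take $q\in J^+_{t,T}(p)$ with witness curve $\gamma$ of length $\ell_h(\gamma)\in(t,T)$ and produce, for each $p_n\to p$, a causal curve $\gamma_n$ from $p_n$ of $h$-length in $(t,T)$ whose endpoint tends to $q$ (the general case $q\in\overline{J^+_{t,T}(p)}$ follows by diagonalisation). The difficulty is that $\gamma$ may be null, so $\gamma(\varepsilon)\notin I^+(p)$ for small $\varepsilon$, and direct concatenation with a short curve from $p_n$ to a point on $\gamma$ is unavailable. The approach I would take is to widen the cones: choose a smooth family of Lorentzian metrics $g_\delta$ sharing the time orientation of $g$, with $C(x)\setminus\{0\}$ contained in the interior of $C_{g_\delta}(x)$ for every $x$, so that $\gamma$ becomes $g_\delta$-timelike. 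Openness of the $g_\delta$-chronological future then gives, for $n$ large, a $g_\delta$-timelike curve from $p_n$ ending arbitrarily close to $q$; choosing it $C^0$-close to $\gamma$ keeps its $h$-length close to $\ell_h(\gamma)$, hence still in $(t,T)$. Diagonalising in $(n,\delta)$ and invoking the upper-semicontinuity limit curve argument one more time promotes the $g_\delta$-timelike approximations to genuine $g$-causal curves $\gamma_n$ from $p_n$ with the required properties.

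The main obstacle is precisely this lower semicontinuity step. Null geodesics are rigid under timelike perturbations, and the delicate point is controlling the $h$-length throughout the widening-and-limit procedure so that the approximating curves stay strictly inside $(t,T)$; this is exactly what the adapted character of $h$ from Section~\ref{sec:adapted_metrics} is meant to ensure, by giving a uniform relation between the width of the cones and the $h$-length of short causal perturbations.
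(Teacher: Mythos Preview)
Your proposal has two genuine gaps, one in each semicontinuity direction.

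\textbf{Upper semicontinuity (outer continuity).} The assertion that the limit curve $\gamma$ satisfies $\ell_h(\gamma)\in[t,T]$ is not justified. Riemannian length is lower semicontinuous under uniform convergence, which gives $\ell_h(\gamma)\le\liminf\ell_h(\gamma_k)\le T$, but there is no corresponding lower bound: a sequence of causal curves can limit to a strictly shorter curve. For an adapted metric the paper's Proposition~\ref{length_limit} yields only $\ell_h(\gamma)\ge \tfrac{1}{\sqrt 2}\limsup\ell_h(\gamma_k)\ge t/\sqrt 2$, so one may well have $\ell_h(\gamma)<t$. Your ``slight extension'' remedy then fails: extending $\gamma$ to length just above $t$ moves the endpoint by roughly $t-\ell_h(\gamma)$, which need not be small. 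The paper handles this with Proposition~\ref{long_curve}, a nontrivial local construction using the adapted coordinate charts: it produces a \emph{different} causal curve $\eta_\e$ between the same endpoints whose length is within $\e$ of $\lim\ell_h(\gamma_k)\in[t,T]$. Only then can a short extension/truncation place the endpoint $\e$-close to $q$.

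\textbf{Lower semicontinuity (inner continuity).} The cone-widening step gives, for each $\delta>0$ and large $n$, a $g_\delta$-timelike curve from $p_n$ ending near $q$; but your ``promotion'' to $g$-causal curves from $p_n$ via a limit argument does not work. Taking $\delta_n\to 0$ along with $n\to\infty$, the limit of such curves is a $g$-causal curve from $p$, not from $p_n$. Fixing $n$ and letting $\delta\to 0$ fails as well: if $q\notin J^+_g(p_n)$ (which happens e.g.\ in Minkowski space when $\gamma$ is a null segment and $p_n$ sits on the spacelike side), then for all sufficiently small $\delta$ one has $q\notin J^+_{g_\delta}(p_n)$, so there is no sequence to take a limit of. The paper avoids this entirely: given a curve $\eta$ from $p$ to $q$ with controlled length (again supplied by Proposition~\ref{long_curve}), it extends $\dot\eta$ to a time-dependent locally Lipschitz causal vector field of constant $h$-norm and uses its time-one flow $\varphi_1$. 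Continuity of $\varphi_1$ gives $\varphi_1(p_n)\to\varphi_1(p)=q$, and the integral curve $s\mapsto\varphi_s(p_n)$ is a genuine $g$-causal curve from $p_n$ with $h$-length equal to that of $\eta$, hence in $(t,T)$ after a small correction.

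In both directions the missing ingredient is Proposition~\ref{long_curve}; the adapted metric is used precisely there, not merely to bound how much length can be lost in a limit.
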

Note that the completeness of $h$ guarantees that $\overline{J^+_{t,T}(p)}$ is compact for all $p\in M$.

\subsection{Time function vs Lyapunov function} A time function in Lorentzian geometry is a function that increases along a future directed causal curve.  Hawking's Theorem states that  the existence of such a function is equivalent to stable causality.  A Lyapunov function in classical dynamical systems is  a function that is non increasing along orbits, and decreasing along certain orbits (physically it is seen as an energy function).  One of the main differences is that in dynamical systems, a flow that carries a function decreasing along all orbits is said to have poor dynamics, and isn't very interesting.  On the opposite, a Lorentzian manifold is physically relevant if it satisfies a certain number of causality conditions.  Since the goal of our work  is to use the techniques of construction of Lyapunov functions in order to construct time functions, our work will take place in a general setting with no particular causality conditions.  Therefore we cannot construct time functions right away, and we need to define some notion of  partial time function.

\begin{defi} Let $(M,g)$ be a spacetime (i.e.  a time oriented Lorentzian manifold), and $E$ a subset of $M$.  We call  $E$-time function   a continuous map $\tau :  M \rightarrow \R$ such that :  \begin{enumerate}
\item $\forall x\in M~\forall y\in J^+(x)~~\tau(y) \ge \tau(x)$ \item $\forall x\in E~\forall y\in J^+(x) \setminus \{ x
\}~~\tau(y) > \tau(x)$ \end{enumerate} \end{defi}

With this terminology, a time function is a $M$-time function.\\
\indent In \cite{conley}, Charles Conley proved that the existence of Lyapunov functions is related to attractors and chain recurrence.  His work was done in the case of a flow on a compact metric space, and it was extended to separable metric spaces by Hurley (see \cite{hurley_92},\cite{hurley_95},\cite{hurley_98}, and \cite{choi} for some corrections).  Since compact Lorentzian manifolds cannot carry time functions (see Proposition \ref{compact_not_causal}), we will not restrict ourselves to the compact case.  This will lead to some technical difficulties similar to the ones that were tackled in Hurley's work.
%\indent It is noteworthy that Hawking's method for constructing time functions could not provide $E$-time functions, as some causality is important in order to prove the continuity  of the function  in \cite{hawking}.

\newpage

\subsection{Chain recurrence and attractors in spacetimes}

\subsubsection{The chain recurrent set}
Closed causal curves are an obvious obstruction to the existence of a time function: a time function would have to be constant along such a curve. However, the absence of such curves is not enough to guarantee the existence of a time function, and Hawking stated that the right condition is stable causality, i.e. the absence of closed causal curves for nearby metrics. Chain recurrence is a different approach to this problem: instead of looking at causal curves for nearby metrics, we try to figure out if there are almost closed curves for a given metric.\\
\indent We follow \cite{hurley_92} and define $\PP(M)$ as the set of continuous functions from $M$ to $\intoo{0}{+\infty}$.

\begin{defi} \label{def_chain_rec} Let $\e \in \PP(M)$, $T>0$ and $p,q\in M$.\\ An $(\e,T)$-chain from $p$ to $q$ is a finite sequence of future directed causal curves $(\gamma_i:[0,1]\to M)_{i=1,\dots ,k}$ of length at least $T$ such that: 
\begin{enumerate} \item $d(p,\gamma_1(0))\le \e( p)$
\item $d(\gamma_i(1),\gamma_{i+1}(0))\le \e(\gamma_i(1))$ for all $i<k$
\item $d(\gamma_k(1),q)\le \e(q)$
\end{enumerate}

A point $p\in M$ is said to be chain recurrent if for any $\e \in \PP(M)$ and $T>0$ there is an $(\e,T)$-chain from $p$ to $p$. We will denote by $R(g)$ the set of chain recurrent points.
\end{defi}

Note that this definition involves the choice of an auxiliary Riemannian metric in order to define lengths of curves and distances between points.

\subsubsection{Attractors and partial time functions}

\indent At the heart of Conley's proof of the existence of Lyapunov functions lies the notion of attractors. Sets that attract orbits will be natural candidates to be the place where a Lyapunov functions reaches its minimum, and the Lyapunov function can be thought of as a distance to an attractor.\\
Given an auxiliary Riemannian metric $h$, a point $p\in M$ and $t>0$, we denote by $J^+_t(p)$ the subset of $M$ consisting of endpoints $\gamma(1)$ of future directed causal curves $\gamma :  \intff{0}{1} \rightarrow M$ such that $\gamma(0)=p$ and $\ell_h(\gamma)> t$.\\ 
\indent When there is an ambiguity about the Riemannian metric, we will call this set $J^+_{t,h}(E)$.  Let us remark that this definition depends strongly on the auxiliary Riemmanian metric, and so will the definitions to come.  With a little more work and some slightly different definitions, some of these notions would not depend on such a choice (replacing constants $t$ by positive continuous functions could be a way of avoiding the scale problem), but it is not necessary for the applications that we propose.  As we will see later, it will be important to choose a Riemannian metric with some special properties.\\
\indent If $E$ is a subset of $M$, then we let $J^+_t(E)=\bigcup_{p\in E}J^+_t(p)$.\\

\begin{defi} \label{defi_attractor}  An open set $U\subset M$ is said to be a pre attractor if there is  $t_0>0$ such that $\overline{J^+_{t_0}(U)} \subset U$.\\ The set $A=\bigcap_{t\ge t_0}\overline{J^+_t(U)}$ is called the attractor.\\ The set $B(A,U)=\bigcup_{t\ge 0} \{p\in M \vert \overline{J^+_t(p)}\subset U\}$ is called the basin of $U$-attraction.\\ If $\mathcal U$ is the set of pre attractors sharing the same attractor $A$, then the basin of attraction is  $B(A)=\bigcup_{U\in\mathcal U}B(A,U)$  \end{defi}

%\indent Note that the distinction between $B(A,U)$ and $B(A)$ has to be made: in the Minkowski space $\R^{1,n-1}$, the empty set is an attractors, with pre attractor $I^+(x)$ for any point $x\in \R^{1,n-1}$. In this case, we see that $B(\emptyset, I^+(x))=I^+(x)$, but $B(\emptyset)=\R^{1,n-1}$.\\
\indent Note that the attractor $A$ may be empty. The central result of this paper is the following theorem, which is the equivalent in Lorentzian
geometry of Conley's Theorem for flows.

\begin{theo} \label{mainthm} If $A$ is  an attractor in a spacetime $(M,g)$ (for an adapted Riemannian metric), then there is a $B(A)\setminus A$-time function.  \end{theo}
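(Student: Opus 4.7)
First I would fix a pre-attractor $U$ with $\overline{J^+_{t_0}(U)}\subset U$ and follow the philosophy of Conley's Lyapunov function construction, with $U$ playing the role of a trapping neighbourhood of $A$. Using the fact that $A\subset U$, build a continuous auxiliary function $f:M\to \intff{0}{1}$ with $\{f=0\}=A$ and $f\equiv 1$ on $M\setminus U$, via the Urysohn-type formula $f(p)=d_h(p,A)/(d_h(p,A)+d_h(p,M\setminus U))$. The Lyapunov-type candidate associated with $U$ is
$$ A_U(p)=\int_0^{+\infty}\sup_{q\in\overline{J^+_{t,t+1}(p)}} f(q)\,dt,\qquad \tau_U(p)=\frac{1}{1+A_U(p)},$$
with the convention $\tau_U(p)=0$ when $A_U(p)=+\infty$, so $\tau_U$ takes values in $\intff{0}{1}$. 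Theorem \ref{theo:continuity_future} provides Hausdorff continuity of $p\mapsto \overline{J^+_{t,t+1}(p)}$ with compact values, so the integrand is jointly continuous in $(t,p)$, and dominated convergence yields continuity of $A_U$ in the extended sense, hence of $\tau_U$ on $M$.

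The monotonicity input is the inclusion $\overline{J^+_{t,t+1}(q)}\subset \overline{J^+_{t+s,t+s+1}(p)}$ whenever $q$ is the endpoint of a future causal curve from $p$ of $h$-length $s$; after the substitution $u=t+s$ this yields
$$ A_U(q)\le A_U(p)-\int_0^s \sup_{q'\in\overline{J^+_{u,u+1}(p)}} f(q')\,du,$$
so $A_U$ is non-increasing along causal curves and $\tau_U$ is non-decreasing. Strict inequality for $p\in B(A,U)\setminus A$ and $q\ne p$ follows from $f(p)>0$ and continuity of $f$: on a small interval of $u>0$ the set $\overline{J^+_{u,u+1}(p)}$ contains points close to $p$, so the subtracted integral is strictly positive. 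Finiteness of $A_U(p)$ on $B(A,U)$, needed so that $\tau_U$ is genuinely bigger than $0$ on the basin, uses the full attractor characterization $A=\bigcap_{t\ge t_0}\overline{J^+_t(U)}$: for $p\in B(A,U)$ the compact sets $\overline{J^+_{t,t+1}(p)}$ become arbitrarily close to $A$ in Hausdorff distance, so $\sup_{\overline{J^+_{t,t+1}(p)}}f\to 0$, and if necessary the integrand is multiplied by a cut-off so that the integral converges.

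Finally, since $M$ is second countable and $B(A)=\bigcup_{U'\in\mathcal U}B(A,U')$ with each $B(A,U')$ open, one extracts a countable subfamily $(U_n)$ of pre-attractors with $B(A)=\bigcup_n B(A,U_n)$ and sets
$$\tau=\sum_{n\ge 1} 2^{-n}\tau_{U_n}.$$
Uniform convergence gives continuity, the sum inherits monotonicity along causal curves, and any $p\in B(A)\setminus A$ lies in some $B(A,U_n)\setminus A$ so $\tau_{U_n}$ contributes a strict increase, making $\tau$ a $B(A)\setminus A$-time function. The hardest step will be the decay $\sup_{\overline{J^+_{t,t+1}(p)}} f\to 0$ for $p\in B(A,U)$: the attractor equality does not by itself give uniform convergence of the sets $\overline{J^+_t(p)}$ to $A$, so one must combine the compactness from Theorem \ref{theo:continuity_future} with the trapping property $\overline{J^+_{t_0}(U)}\subset U$ to obtain the quantitative decay that keeps $A_U(p)$ finite on the basin.
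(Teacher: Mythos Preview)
Your overall architecture matches the paper's: build a $B(A,U)\setminus A$-time function for each pre-attractor $U$ via a Conley-type construction, then combine countably many of these by Lindel\"of. The paper averages a countable family $\sum_{q\in\Q_+^*} u_q(1-g_q)$ with $g_q(x)=\sup_{J^+_q(x)} f$ rather than your integral in $t$, but that is cosmetic; your monotonicity computation via the substitution $u=t+s$ is the continuous analogue of the paper's Proposition~\ref{decreasing}.

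The genuine gap is precisely the step you flag as hardest. Your Urysohn function $f(p)=d_h(p,A)/\bigl(d_h(p,A)+d_h(p,M\setminus U)\bigr)$ need not tend to $0$ along future causal curves that escape to infinity while remaining inside $U$: such curves can stay at bounded distance from $M\setminus U$ while $d_h(\cdot,A)\to\infty$, so $f\to 1$ along them. Lemma~\ref{attractor} only controls curves whose endpoints stay in a fixed compact set, so it does not force $\sup_{\overline{J^+_{t,t+1}(p)}} f\to 0$. Consequently $A_U(p)$ can be $+\infty$ for $p\in B(A,U)\setminus A$, whence $\tau_U\equiv 0$ on the basin and strict monotonicity is lost. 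Your suggested fix (``multiply the integrand by a cut-off'') does not close this: a cut-off in $t$ destroys the shift identity $u=t+s$ on which your monotonicity inequality rests, while a fixed spatial cut-off kills $f$ near points $p$ outside its support, so the subtracted integral $\int_0^s$ vanishes there and strictness fails. The paper's remedy is Hurley's weight: replace the denominator term by $\Psi(p)\,d_h(p,M\setminus B(A,U))$ with a continuous $\Psi\ge 1$ chosen so that $\Psi\cdot d_h(\cdot,M\setminus B(A,U))\ge n$ on $\overline{J^+_T(U)}\setminus K_n$ for a compact exhaustion $(K_n)$. This forces $f<\varepsilon$ both near $A$ and near infinity (property~(3) of Lemma~\ref{construction}), which is exactly the uniform decay you need; it simultaneously repairs your dominated-convergence claim (as written there is no integrable majorant) and handles the case $A=\emptyset$, where your $f$ is undefined.
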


Adapted Riemannian metrics will be defined and discussed in section \ref{sec:adapted_metrics}.\\
\indent When looking for chain recurrent points, the best places to look into are attractors and repellers (i.e. attractors for the same metric with reversed time orientation).

\begin{prop} \label{chains_attractors} Let $(M,g)$ be a spacetime, and let $p\in M$. If $p\notin R(g)$, then there is an attractor $A$ such that $p\in B(A)\setminus A$.
\end{prop}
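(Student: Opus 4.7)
The hypothesis yields $\varepsilon \in \PP(M)$ and $T>0$ for which there is no $(\varepsilon,T)$-chain from $p$ to $p$. The strategy, mirroring Conley's classical construction, is to take the set of endpoints of $(\varepsilon,T)$-chains starting at $p$, thicken it into an open set $U$, and show that $U$ is a pre-attractor whose associated attractor avoids $p$ while its basin of attraction contains $p$.

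Set
$$\mathcal{R} = \bigl\{ \sigma_k(1) : (\sigma_1,\dots,\sigma_k) \text{ is an } (\varepsilon,T)\text{-chain starting at } p \bigr\},$$
and pick a positive continuous function $\delta:M\to \intoo{0}{+\infty}$ with $\delta\le \varepsilon/4$, chosen slowly varying enough that $\varepsilon$ changes by a factor of at most $2$ across any ball of radius $\delta(x)+\delta(y)$. Let
$$U = \{ x\in M : d(x,\overline{\mathcal{R}}) < \delta(x) \},$$
an open neighborhood of $\overline{\mathcal{R}}$. The first observation is that $p\notin U$: a point $q\in \overline{\mathcal{R}}$ within $\delta(p)$ of $p$ could be approximated arbitrarily well by some $q' = \sigma_k(1)\in \mathcal{R}$, and condition (3) of Definition \ref{def_chain_rec} would then read $d(\sigma_k(1),p)\le \delta(p)+d(q,q') \le \varepsilon(p)$, so $(\sigma_1,\dots,\sigma_k)$ would become an $(\varepsilon,T)$-chain from $p$ to $p$, contradicting the hypothesis.

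Next, I claim $U$ is a pre-attractor with $t_0=T$. Take $y\in \overline{J^+_T(U)}$, written $y=\lim_n \gamma_n(1)$ where $\gamma_n$ is a future directed causal curve of length $>T$ starting at some $x_n\in U$. Pick $q_n\in \mathcal{R}$ with $d(x_n,q_n)<\delta(x_n)+1/n$, realized by a chain $(\sigma^{(n)}_i)$ ending at $\sigma^{(n)}_{k_n}(1)=q_n$. The curve $\gamma_n$ can be appended to this chain provided $d(q_n,x_n)\le \varepsilon(q_n)$, and this follows from $\delta(x_n)\le \varepsilon(x_n)/4$ combined with the slow variation of $\varepsilon$ between $x_n$ and $q_n$. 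Thus $\gamma_n(1)\in \mathcal{R}$, so in the limit $y\in \overline{\mathcal{R}}\subset U$.

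Now let $A=\bigcap_{t\ge T}\overline{J^+_t(U)}$ be the attractor determined by $U$. Any future directed causal curve of length $>T$ issued from $p$ is, by itself, an $(\varepsilon,T)$-chain with $k=1$, so $J^+_T(p)\subset \mathcal{R}$ and hence $\overline{J^+_T(p)}\subset \overline{\mathcal{R}}\subset U$, placing $p$ in $B(A,U)\subset B(A)$. Conversely $A\subset \overline{J^+_T(U)}\subset \overline{\mathcal{R}}$, which stays away from the $\delta(p)$-neighborhood of $p$ established above, so $p\notin A$. The main obstacle is the careful interplay between the thickness $\delta$ and the asymmetric gap conditions in the chain definition: one must pick $\delta$ small enough both to keep $p$ out of $U$ and to make the triangle-inequality estimate $d(q_n,x_n)\le \varepsilon(q_n)$ valid in the extension step. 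Once $\delta$ is calibrated to the local variation of $\varepsilon$, the remaining verifications are routine.
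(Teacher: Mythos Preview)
Your argument is correct and follows essentially the same route as the paper: both build the pre-attractor from the set of points reachable from $p$ by $(\varepsilon,T)$-chains, verify it is a pre-attractor by appending a curve to an existing chain (using a calibrated auxiliary function $\delta$ to control the jump), and conclude $p\in B(A,U)\setminus A$ from $J^+_T(p)\subset U$ and $p\notin U$. The only cosmetic difference is that the paper takes $U$ to be the set of \emph{targets} of chains from $p$ (relying on condition~(3) to make it open) rather than an explicit $\delta$-thickening of the endpoint set $\mathcal R$; one small slip in your write-up is the error term $1/n$ in the choice of $q_n$, which should instead be tied to $\varepsilon(x_n)$ (e.g.\ $\delta(x_n)/2$) so that the inequality $d(q_n,x_n)\le \varepsilon(q_n)$ actually follows.
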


A similar result can be obtained in stably causal spacetimes.

\begin{prop} \label{stable_attractors} Let $(M,g)$ be a stably causal spacetime. Given any point $p\in M$, there is an attractor $A$ such that $p\in B(A)\setminus A$.
\end{prop}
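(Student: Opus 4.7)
The plan is to derive this statement immediately from Theorem \ref{stable_no_chains} and Proposition \ref{chains_attractors}. Since $(M,g)$ is stably causal, Theorem \ref{stable_no_chains} gives $R(g)=\emptyset$, so any chosen $p\in M$ lies outside the chain recurrent set. Applying Proposition \ref{chains_attractors} at this $p$ then produces an attractor $A$ with $p\in B(A)\setminus A$, which is exactly what is required.

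In this setup the proposition is essentially a corollary, so the ``hard step'' is really distributed across the two invoked results: Theorem \ref{stable_no_chains} does the work of turning stable causality into the non-existence of $(\e,T)$-chains, while Proposition \ref{chains_attractors} does the work of converting $p\notin R(g)$ into an attractor block via a Hurley-style construction adapted to the Lorentzian setting (relying on the adapted Riemannian metrics and the continuity of $\overline{J^+_{t,T}}$ from Theorem \ref{theo:continuity_future}). The only real check for the combination itself is that Proposition \ref{chains_attractors} is stated in exactly the generality we need, namely for an arbitrary spacetime and an arbitrary $p\notin R(g)$, which matches our hypothesis verbatim.

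A more direct route, bypassing chain recurrence, would use stable causality to produce a chain $g<g_1<g_2$ of conformally related metrics with no closed causal curves, and try $U=I^+_{g_2}(p)$ as a candidate pre-attractor. The set $U$ is open and avoids $p$ by causality of $g_2$, and one easily sees $J^+_{g,t_0}(U)\subset U$, since a $g$-causal curve starting at some $q\in U$ is $g_1$-timelike, making its endpoint lie in $I^+_{g_1}(I^+_{g_2}(p))\subset I^+_{g_2}(p)=U$. The main obstacle along this route is obtaining the strict containment of \emph{closures} $\overline{J^+_{g,t_0}(U)}\subset U$, which requires exploiting the room between $g_1$ and $g_2$ together with a limit curve argument based on Theorem \ref{theo:continuity_future} to guarantee that boundary limits do not escape $U$. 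Since the combination route is immediate, I would not pursue this direct construction.
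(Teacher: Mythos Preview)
Your main route is circular within the paper's logical structure. The paper explicitly warns (right after stating Proposition \ref{stable_attractors}) that ``the statement of Theorem \ref{stable_no_chains} is actually misleading: our strategy for constructing time functions relies on Theorem \ref{mainthm} and Proposition \ref{stable_attractors}, it is the existence of a time function that implies Theorem \ref{stable_no_chains}.'' Concretely, in section \ref{sec:stably_causal} the paper first proves Proposition \ref{stable_attractors} directly, then combines it with Corollary \ref{all_attractors} to obtain a time function on a stably causal spacetime, and only then invokes Theorem \ref{time_no_chains} to conclude $R(g)=\emptyset$, which is Theorem \ref{stable_no_chains}. So you cannot cite Theorem \ref{stable_no_chains} here without begging the question.

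Interestingly, the ``direct route'' you sketched and then set aside is essentially what the paper does, with a few adjustments. The paper picks $g'\succ g$ strongly causal (via Proposition \ref{stable_implies_strong}) and sets $U=I^+_{g'}(z)$ for a point $z\in I^-_g(x)$ chosen inside a causally convex neighbourhood $W$ of $x$ that also satisfies Proposition \ref{short_curves}. The closure condition $\overline{J^+_1(U)}\subset U$ that worried you is handled by a short limit-curve argument isolated as Lemma \ref{future_pre_attractor}: the gap $g\prec g'$ turns $g$-causal curves into $g'$-timelike ones, and openness of $I^-_{g'}$ absorbs the limit point. The choice of $z$ in the past of $x$ makes $x\in U\subset B(A,U)$ automatic; the real work is showing $x\notin A$, and this is where strong causality of $g'$ and the length bound from Proposition \ref{short_curves} are used to rule out arbitrarily long $g$-causal curves ending near $x$ and starting in $U$. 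Your variant with $U=I^+_{g_2}(p)$ avoiding $p$ would instead make $p\notin A$ easy but $p\in B(A,U)$ nontrivial; the paper's choice trades these difficulties the other way.
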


The statement of Theorem \ref{stable_no_chains} is actually misleading: our strategy for constructing time functions relies on Theorem \ref{mainthm} and Proposition \ref{stable_attractors}, it is the existence of a time function that implies Theorem \ref{stable_no_chains}.

\subsection{Overview}
We will start by reviewing some classical definitions and results of causality theory. In section \ref{sec:adapted_metrics}, we will construct Riemannian metrics that are "adapted" to a spacetime, in the sense that they have nice properties when considering lengths of limit curves. Section \ref{sec:continuity_future} is devoted to the continuity of the future map (Theorem \ref{theo:continuity_future}).\\
Attractors will be studied in section \ref{sec:attractors}, where Theorem \ref{mainthm} will be proved. Finally, we will study the link between attractors and chain recurrence in section \ref{sec:chain_recurrence}, and give applications of attractors to stably causal spacetimes in \ref{sec:stably_causal}.

\section{Causality and time functions}  \label{sec:causality}
We recall the basic definitions of Lorentzian geometry, for further information see \cite{BEE}, \cite{O'N} or the introduction of \cite{o'neill_trous_noirs}. For a survey on causality theory, see \cite{c_causality}.\\
\indent A Lorentzian metric $g$ on a manifold $M$ is a symmetric $(2,0)$-tensor of signature $(-,+,\dots,+)$. A tangent vector $v\in T_pM$ is called \textbf{timelike} if $g_p(v,v)<0$, and \textbf{causal} if  $g_p(v,v)\leq 0$.\\
\indent A Lorentzian manifold is time-orientable if there is a continuous everywhere timelike vector field. A time orientation on a time orientable Lorentzian manifold is an equivalence class of everywhere timelike vector fields, where two such vector fields $T,T'$ are considered equivalent if $g(T,T')<0$ everywhere (i.e. they lie in the same connected component of $g_p^{-1}(\intoo{-\infty}{0})$ at every point $p\in M$).\\
\indent A causal vector $v\in TM$ is \textbf{future directed} if $g(v,T)\leq 0$ where $T$ is any vector field in the time orientation. A  future directed curve is a Lipschitz curve whose tangent vector is causal and future directed almost everywhere. A past directed curve is a curve such that the same curve reparametrized with reversed orientation is future directed.\\
 \indent By a \textbf{spacetime}, we always refer to a time oriented Lorentzian manifold $(M,g)$.
\subsection{A glimpse at the causal ladder} If $(M,g)$ is a spacetime,  the \textbf{chronological future} $I^+(p)$ of a point $p\in M$ is the set of endpoints of future directed timelike curves starting at $p$. Its \textbf{chronological past} $I^-(p)$ is the set of endpoints of past directed timelike curves starting at $p$.\\
\indent The \textbf{causal future} $J^+(p)$ (resp. \textbf{causal past} $J^-(p)$) is the set of endpoints of future directed (resp. past directed) causal curves starting at $p$. \\
\indent Let us state a few basic properties for the chronological and causal futures (see \cite{penrose} for a proof).
\begin{prop} If $q\in J^+(p)$, then $J^+(q)\subset J^+(p)$ and $I^+(q)\subset I^+(p)$.\\
The chronological future $I^+(p)$ is open. \end{prop}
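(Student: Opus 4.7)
The plan is to reduce everything to two standard local facts about a convex normal neighborhood $U$ of a point $x\in M$: (i) the local chronological future $I^+_U(x)$ is open, being the image under $\exp_x$ of the future timelike cone in $T_xM$, and (ii) a piecewise causal curve whose last segment is timelike can, by a small perturbation near the corner, be replaced by a genuinely timelike curve with the same endpoints. Both facts are classical consequences of writing $g$ in normal coordinates, and neither requires any causality condition on $(M,g)$.

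The inclusion $J^+(q)\subset J^+(p)$ is immediate by concatenation: given future directed causal curves $\alpha$ from $p$ to $q$ and $\beta$ from $q$ to $r$, the reparametrised concatenation is Lipschitz with causal, future directed velocity almost everywhere (the one exceptional point being the junction), so $r\in J^+(p)$. For the openness of $I^+(p)$, fix $r\in I^+(p)$ reached by a timelike curve $\gamma$, pick a convex normal neighborhood $U$ of $r$, and choose $r_0=\gamma(1-\delta)$ with $\delta>0$ small enough that $r_0\in U$. By (i), $I^+_U(r_0)$ is an open neighborhood of $r$, and any $r'\in I^+_U(r_0)$ is reached from $p$ by concatenating the initial timelike segment to $r_0$ with the local timelike curve to $r'$; by (ii) this concatenation can be smoothed into an honest timelike curve, so $I^+_U(r_0)\subset I^+(p)$.

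The inclusion $I^+(q)\subset I^+(p)$ when $q\in J^+(p)$ is the push-up lemma, the only genuinely substantive step. Given a causal curve $\alpha$ from $p$ to $q$ and a timelike curve $\beta$ from $q$ to $r$, work inside a convex normal neighborhood $U$ of $q$ and choose $r_+=\beta(\eta)\in U$ with $\eta>0$ small. By the time-reversed version of (i), the local chronological past $I^-_U(r_+)$ is an open neighborhood of $q$, so it contains a point $q_-$ lying on $\alpha$ slightly before $q$. The three-piece concatenation of $\alpha$ from $p$ to $q_-$ (causal), a local timelike curve from $q_-$ to $r_+$ inside $U$, and $\beta$ from $r_+$ to $r$ (timelike) is then a causal curve from $p$ to $r$ whose final two segments are timelike; applying (ii) at the remaining causal/timelike corner produces a timelike curve from $p$ to $r$, whence $r\in I^+(p)$. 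The main obstacle is the smoothing step (ii), which is invoked whenever a causal-plus-timelike concatenation must be upgraded to a genuinely timelike curve; once (i) and (ii) are granted, everything else is bookkeeping.
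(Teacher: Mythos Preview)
The paper does not give its own proof of this proposition; it simply refers to \cite{penrose}. Your arguments for $J^+(q)\subset J^+(p)$ and for the openness of $I^+(p)$ are correct and standard. The gap is in the push-up step $I^+(q)\subset I^+(p)$.

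You declare (ii) to be a \emph{local} fact, valid inside a convex normal neighborhood $U$, and that is indeed all that follows from writing $g$ in normal coordinates: if a causal segment and a following timelike segment both lie in $U$, then the unique geodesic in $U$ joining the outer endpoints is timelike. But in your last sentence you invoke (ii) at the corner $q_-$ to conclude that the concatenation of $\alpha$ from $p$ to $q_-$ (causal) with the timelike curve from $q_-$ to $r$ can be replaced by a timelike curve from $p$ to $r$. The segment $\alpha$ from $p$ to $q_-$ has no reason to lie in any convex neighborhood of $q_-$, so a small perturbation near that corner only makes the curve timelike on a short arc ending at $q_-$; the portion from $p$ up to that arc remains merely causal. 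After one application of (ii) you are therefore in exactly the situation you started from, with $q$ replaced by a slightly earlier point on $\alpha$, and nothing has been gained.

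What is missing is the iteration that turns the local fact into the global one. Cover the compact set $\alpha([0,1])$ by finitely many convex normal neighborhoods, choose a subdivision $0=s_0<s_1<\cdots<s_k=1$ subordinate to it, and apply the local push-up $k$ times working backwards: first obtain $\alpha(s_{k-1})\in I^-(r)$, then $\alpha(s_{k-2})\in I^-(r)$, and finally $p=\alpha(s_0)\in I^-(r)$. Equivalently, observe that the set $\{s\in[0,1]:\alpha(s)\in I^-(r)\}$ is nonempty (it contains $1$), open in $[0,1]$ (by the openness of $I^-(r)$ that you already proved), and closed (this is exactly where the local fact (ii) is used), hence equal to $[0,1]$.
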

The causal future is not necessarily closed, but we always have the chain of inclusions $I^+(p)\subset J^+(p)\subset \overline{I^+(p)}$.\\
\indent A spacetime $(M,g)$ is said to be \textbf{chronological} if there is no closed timelike curve, i.e. if  $p\notin I^+(p)$ for all $p\in M$. We say that $(M,g)$ is \textbf{causal} if there is no closed causal curve. The first implication of the chronological character of a spacetime concerns its  topology.

\begin{prop}\label{compact_not_causal} If $(M,g)$ is chronological, then $M$ is not compact. \end{prop}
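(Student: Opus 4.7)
The plan is to proceed by contradiction, exploiting the fact that $I^+$ is open together with a pigeonhole argument on a finite subcover.

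Suppose $M$ is compact. Since $I^+(p)$ is open for every $p\in M$ (as stated in the proposition quoted just above), and since $p\in I^+(p')$ for some $p'$ simply amounts to requiring every point to lie in someone's chronological future --- which we do get, because every point has timelike vectors emanating backward from it, so $p\in I^+(q)$ for $q$ slightly in its timelike past --- the family $\{I^+(p)\}_{p\in M}$ is an open cover of $M$. Compactness yields a finite subcover $M = I^+(p_1)\cup\cdots\cup I^+(p_n)$.

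Next I would set up the pigeonhole step. For each $i\in\{1,\dots,n\}$, the point $p_i$ belongs to $M$, hence to some $I^+(p_{f(i)})$, which defines a map $f:\{1,\dots,n\}\to\{1,\dots,n\}$. Iterating $f$ starting from $i=1$ produces a sequence in a finite set, so there exist $k\ge 0$ and $m\ge 1$ with $f^{k+m}(1)=f^k(1)$. Setting $j=f^k(1)$, we obtain $p_j\in I^+(p_{f(j)})$, $p_{f(j)}\in I^+(p_{f^2(j)})$, \dots, $p_{f^{m-1}(j)}\in I^+(p_{f^m(j)})=I^+(p_j)$. Concatenating the timelike curves realising each of these relations in reverse yields a future directed timelike curve from $p_j$ to itself, so $p_j\in I^+(p_j)$, contradicting the chronological hypothesis.

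The only mildly delicate point is justifying that $\{I^+(p)\}_{p\in M}$ really is a cover, i.e. that every $q\in M$ has some $p$ with $q\in I^+(p)$; but this is immediate by picking a future directed timelike vector at $q$ and flowing backward along a local timelike curve for a short time to obtain such a $p$. The rest is a routine finiteness argument; the substantive content is packaged in the openness of $I^+$ and the concatenation property of timelike curves, both of which are standard and already invoked in the excerpt. I do not foresee a serious obstacle.
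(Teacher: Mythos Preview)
Your argument is correct; it is in fact the classical proof (the one the paper attributes to \cite{penrose}). The paper, however, deliberately gives a different proof in keeping with its dynamical-systems theme: it takes the flow $\varphi^t$ of the timelike vector field defining the time orientation, uses compactness to produce a recurrent point $p$, and then openness of $I^-(\varphi^1(p))$ combined with recurrence gives $\varphi^t(p)\in I^-(\varphi^1(p))$ for some $t>1$, whence $\varphi^1(p)\in I^+(\varphi^t(p))\subset I^+(\varphi^1(p))$. Your route is more elementary in that it needs only openness of $I^+$ and transitivity of the chronological relation, with no appeal to recurrence of flows; the paper's route, on the other hand, showcases exactly the kind of translation between causal and dynamical notions that the article is built around, and the author explicitly flags that the ``standard'' covering proof exists elsewhere.
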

\begin{proof} Consider a time orientation $T$ on $M$, i.e. a timelike vector field, and let $\p^t$ be its flow. If $M$ is compact, then the flow $\p^t$ has recurrent points (a point $p$ is recurrent if there is a sequence $t_n\to +\infty$ such that $\p^{t_n}(p)\to p$). Let $p\in M$ be recurrent. Since $I^-(\p^1(p))$ is open and contains $p$, it contains $\p^{t}(p)$ for some $t>1$. This shows that $\p^1(p)\in I^+(\p^{t}(p)) \subset I^+(\p^1(p))$, therefore $(M,g)$ is not chronological.
\end{proof}

Proposition \ref{compact_not_causal} is the source of  technical difficulties that we will face.  The "standard" proof of this classic result can be found in \cite{penrose}.\\
\indent Note that another proof using dynamical systems is possible: on a compact manifold, any vector field is arbitrarily close (in the compact-open topology) to a vector field that has a closed orbit (this is a consequence of the existence of recurrent points and of the $ C^0$-Closing Lemma). Since the set of timelike vector fields is open in the compact-open topology, there is always a timelike vector field with a closed orbit, which is a closed timelike curve. \\

We say that $(M,g)$ is \textbf{strongly causal} if every $p\in M$ has arbitrarily small  neighbourhoods $U$ such that the intersection of $U$ with a causal curve is always connected. Such a neighbourhood $U$ is called \textbf{causally convex}. Since small neighbourhoods do not contain closed causal curves (one can find charts such that the first coordinate is increasing along future directed causal curves), a strongly causal spacetime is causal.\\

\indent Let us define the partial order $\prec$ on the set of Lorentz metrics on $M$ by $g\prec g'$ if every non zero causal vector for $g$ is timelike for $g'$ (i.e. if the light cone of $g'$ is larger that the light cone of $g$). We say that $(M,g)$ is \textbf{stably causal} if there is $g\prec g'$ such that $(M,g')$ is causal.

\begin{prop} \label{stable_implies_strong} A stably causal spacetime is strongly causal. \end{prop}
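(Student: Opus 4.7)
The plan is to argue by contradiction. Let $g'$ be a Lorentz metric with $g\prec g'$ and $(M,g')$ causal, and suppose that $(M,g)$ is not strongly causal at some $p\in M$. Then one finds a neighborhood $U$ of $p$, sequences $p_n,q_n\to p$, and future-directed $g$-causal curves $\gamma_n$ from $p_n$ to $q_n$ whose images are not contained in $U$. The strict cone inclusion $g\prec g'$ says that every non-zero $g$-causal vector is $g'$-timelike; in particular each non-constant $\gamma_n$ is a $g'$-timelike curve. The goal is then to produce, in the limit, a closed $g'$-causal curve through $p$, which would contradict the causality of $(M,g')$.

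Fix a complete auxiliary Riemannian metric $h$ on $M$ and reparametrize $\gamma_n\colon[0,L_n]\to M$ by $h$-arc length. Since each $\gamma_n$ leaves $U$ and returns close to $p$, one has $L_n\ge 2\,d_h(p,\partial U)>0$. In the favorable case where $(L_n)$ is bounded from above, a standard limit-curve argument yields, after passing to a subsequence, a non-constant future-directed $g$-causal curve $\gamma\colon[0,L]\to M$ with $\gamma(0)=\gamma(L)=p$: a closed $g$-causal loop. Since $g\prec g'$, this loop is already a closed $g'$-timelike curve, contradicting $g'$-causality.

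The main obstacle is the remaining case $L_n\to\infty$, where the limit curves are inextendible rather than closed. To deal with it I would pick a compact sub-neighborhood $K\subset U$ of $p$, let $a_n$ and $b_n$ be respectively the first exit time and last re-entry time of $\gamma_n$ relative to $K$, and extract by compactness of $\partial K$ accumulation points $\gamma_n(a_n)\to p_{*}$ and $\gamma_n(b_n)\to p^{*}$. The limit-curve lemma applied to the initial and final segments (which stay in $K$) should provide $g$-causal segments from $p$ to $p_{*}$ and from $p^{*}$ to $p$. For the middle pieces, which are $g'$-timelike from $\gamma_n(a_n)$ to $\gamma_n(b_n)$, I would insert an intermediate metric $g\prec g''\prec g'$ and exploit the openness of the $g'$-chronological relation to upgrade the limit of $g''$-timelike relations into a genuine $g'$-causal curve from $p_{*}$ to $p^{*}$; concatenating the three yields the desired closed $g'$-causal curve at $p$. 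The technical heart of the argument is precisely this last passage from a limit of $g''$-timelike relations between moving endpoints to a $g'$-causal relation between the limits $p_{*}$ and $p^{*}$; a secondary subtlety is to guarantee that the initial and final portions of $\gamma_n$ have bounded $h$-length (or else to iterate the compactness extraction), since $g$-causal curves may a priori accumulate inside $K$.
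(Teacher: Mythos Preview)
Your limit-curve approach is genuinely different from the paper's. The paper gives a direct local construction: in normal coordinates at $p$ it interpolates $g\prec g_\alpha\prec g_{\alpha'}\prec g'$ by constant Minkowski-type metrics on a chart $U$, and builds an explicit causally convex neighbourhood $W=I^-_{U,g}(p_+)\cap I^+_{U,g}(p_-)$, where $p_\pm$ are nearby points on the time axis. A coordinate estimate shows that any $g$-causal curve from $W$ that escapes $U$ must meet $I^+_{U,g'}(p)$, and symmetrically any such curve returning to $W$ must first meet $I^-_{U,g'}(p)$; so a $g$-causal curve leaving and re-entering $W$ forces $p\in I^+_{g'}(p)$. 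There are no limit curves and no case split on lengths, which is what lets the paper keep the argument self-contained (it explicitly wants to avoid the usual detour through time functions).

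Your plan can be completed, but the two gaps you flag need different cures. The middle piece is actually easier than you suggest, and no intermediate metric $g''$ is needed. Once the bounded-length limit segments give non-trivial $g$-causal arcs from $p$ to $p_*$ and from $p^*$ to $p$, the strict inclusion $g\prec g'$ yields $p_*\in I^+_{g'}(p)$ and $p^*\in I^-_{g'}(p)$. Openness of $I^\pm_{g'}(p)$ then gives $\gamma_n(a_n)\in I^+_{g'}(p)$ and $\gamma_n(b_n)\in I^-_{g'}(p)$ for large $n$, and since the middle segment of $\gamma_n$ is $g'$-timelike you get $p\in I^+_{g'}(\gamma_n(b_n))\subset I^+_{g'}(\gamma_n(a_n))\subset I^+_{g'}(p)$ directly. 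The more serious gap is the possibly unbounded $h$-length of the initial or final segment inside $K$: ``iterating the extraction'' will not terminate in general. What works instead is an imprisonment argument. The limit of those segments is a future-inextendible $g$-causal curve trapped in the compact $K$; passing to a minimal closed invariant subset of its $\omega$-limit set yields a $g$-causal curve $\eta$ with $\eta(t_k)\to\eta(0)$ for some $t_k\to\infty$. Since $\eta$ is $g'$-timelike, $I^-_{g'}(\eta(t_1))$ is an open set containing $\eta(0)$, hence $\eta(t_k)\in I^-_{g'}(\eta(t_1))$ for large $k$, while also $\eta(t_k)\in I^+_{g'}(\eta(t_1))$; this produces a closed $g'$-timelike curve through $\eta(t_1)$, the desired contradiction.
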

\begin{proof} Let $(M,g)$ be a stably causal spacetime, and let $p\in M$. Let $g'\succ g$ be a causal metric. Let $U$ be a chart around $p$  with coordinates $(t,x_1,\dots,x_{n-1})$ such that $g_p =-dt^2+dx_1^2+\cdots +dx_{n-1}^2$ (such coordinates can be obtained with the exponential map). For $\alpha >0$, we denote by $g_\alpha$ the metric $-\alpha dt^2+dx_1^2+\cdots dx_{n-1}^2$ on $U$. If $\alpha'>\alpha>1$ and $\alpha'$ is sufficiently close to $1$, then $g\prec g_\alpha\prec g_{\alpha'}\prec g'$ on $U$.\\
\indent For $q\in U$ and $h$ a Lorentz metric on $U$, we denote by $I^{\pm}_{U,h}(q)$ the chronological past and future of $q$ in the spacetime $(U,h)$.\\
\indent Assume that $p=(0,\dots,0)$ in coordinates, and let  $p_+=(\delta,0,\dots,0)$ for $\delta>0$. Let $\e>0$ be small enough so that every future (resp. past) directed causal curve starting at $p$ and escaping $U$ meets the level $\{t=\e\}$ (resp. $\{t=-\e\}$).\\
\indent  If $\delta>0$ is small enough so that $(\delta+\e)^2<\frac{\alpha'}{\alpha} \e^2$, then we obtain:
$$I^-_{U,g_\alpha}(p_+) \cap \{ t=-\e\} \subset I^-_{U,g_{\alpha'}}(p)\cap \{t=-\e\}$$
\indent Since $g\prec g_\alpha\prec g_{\alpha'}\prec g'$, this implies that:
$$ I^-_{U,g}(p_+) \cap \{ t=-\e\} \subset I^-_{U,g'}(p)\cap \{t=-\e\}$$
\indent If $\gamma$ is a future directed causal curve (for $g$), and $\gamma(1)=p_+$, then there is $t<1$ such that $\gamma(t)\in I^-_{g'}(p)$ (see Figure \ref{fig:stable_implies_strong}).\\
\indent Similarly, if $\delta$ is small enough, then the point $p_-=(-\delta,0,\dots,0)$ satisfies:
$$ I^+_{U,g}(p_-) \cap \{ t=\e\} \subset I^+_{U,g}(p)\cap \{t=\e\}$$
\indent If $\gamma$ is a future directed causal curve (for $g$), and $\gamma(0)=p_-$, then there is $t>0$ such that $\gamma(t)\in I^+_{g'}(p)$.\\
\indent Now let $W=I^-_{U,g}(p_+)\cap I^+_{U,g}(p_-)$. If $\gamma$ were a closed causal curve whose intersection with $W$ is disconnected, then we can assume that $\gamma(0)=p_-$ and $\gamma(1)=p_+$. There are $t_1>0$ such that $\gamma(t_1)\in I^+_{g'}(p)$  and $t_1<t_2<1$ such that $\gamma(t_2)\in I^-_{g'}(p)$. This implies that $p\in I^+_{g'}(p)$, which is absurd because  $(M,g')$ is causal. This shows that $W$ is causally convex, hence the strong causality of $(M,g)$.

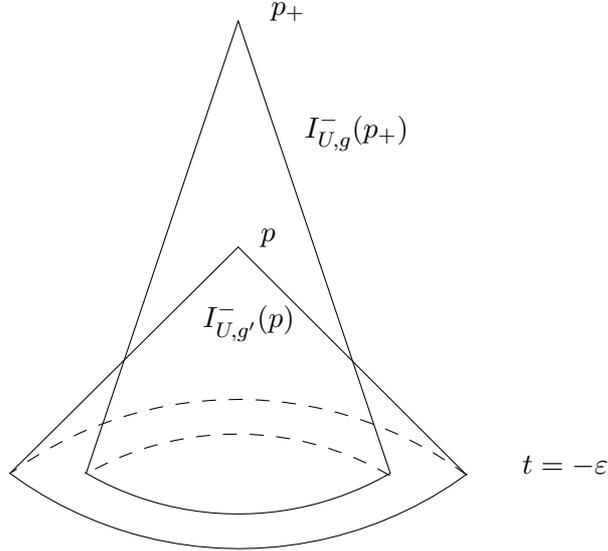
\begin{figure}[h]
\begin{tikzpicture}[line cap=round,line join=round,>=triangle 45,x=1.0cm,y=1.0cm]
\clip(-3.3,-2) rectangle (16.9,6.3);
\draw (0,-1)-- (3,2);
\draw (3,2)-- (6,-1.02);
\draw (5,-1)-- (3,5);
\draw (3,5)-- (1,-1);
\draw [shift={(3.01,3.04)}] plot[domain=4.07:5.35,variable=\t]({1*5.04*cos(\t r)+0*5.04*sin(\t r)},{0*5.04*cos(\t r)+1*5.04*sin(\t r)});
\draw [shift={(3,2.43)}] plot[domain=4.18:5.24,variable=\t]({1*3.97*cos(\t r)+0*3.97*sin(\t r)},{0*3.97*cos(\t r)+1*3.97*sin(\t r)});
\draw [shift={(2.99,-5.06)},dash pattern=on 5pt off 5pt]  plot[domain=0.93:2.2,variable=\t]({1*5.04*cos(\t r)+0*5.04*sin(\t r)},{0*5.04*cos(\t r)+1*5.04*sin(\t r)});
\draw [shift={(2.98,-4.45)},dash pattern=on 5pt off 5pt]  plot[domain=1.04:2.09,variable=\t]({1*3.97*cos(\t r)+0*3.97*sin(\t r)},{0*3.97*cos(\t r)+1*3.97*sin(\t r)});
\draw (3.16,2.38) node[anchor=north west] {$p$};
\draw (3.3,5.38) node[anchor=north west] {$p_+$};
\draw (6.6,-0.64) node[anchor=north west] {$t=-\varepsilon$};
\draw (3.74,3.92) node[anchor=north west] {$I^-_{U,g}(p_+)$};
\draw (2.4,1.4) node[anchor=north west] {$I^-_{U,g'}(p)$};
\end{tikzpicture}
\caption{Finding a causally convex neighbourhood} \label{fig:stable_implies_strong}
\end{figure}

\end{proof}

Note that this proof is also inspired by ideas of dynamical systems: the notion of strong causality is very similar to the absence of non wandering points for a flow, and the Closing Lemma states that a non wandering point is a periodic point for a nearby flow.\\
\indent Proposition \ref{stable_implies_strong} is a well known result, however it seems that every available proof in the literature (see e.g. \cite{minguzzi_sanchez}) uses the existence of a time function. We gave a direct proof because we will use the strong causality of stably causal spacetimes in section \ref{sec:stably_causal} in order to produce time functions, so we want to make sure that this construction does not rely on the existence of such functions.

\subsection{Time functions}
A \textbf{time function} is a continuous function $\tau:M\to \R$ such that $\tau \circ \gamma$ is increasing for any future directed causal curve $\gamma : I\subset \R \to M$. The existence of a time function implies that $(M,g)$ is causal. It is also easy to see that it implies strong causality (the sets $\tau^{-1}(\intoo{a}{b})$ are causally convex). A famous theorem of Hawking states that the right condition is stable causality. 

\begin{theo} \label{theo_hawking} A spacetime admits a time function if and only if it is stably causal. \end{theo}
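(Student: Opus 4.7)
The plan is to prove each direction separately, using the attractor machinery developed earlier in the paper for the nontrivial direction, and a standard perturbation argument for the converse.

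For the forward implication (stably causal implies existence of a time function), I would combine Proposition \ref{stable_attractors} and Theorem \ref{mainthm} pointwise and then glue the outputs via a countable sum. Assume $(M,g)$ is stably causal. By Proposition \ref{stable_attractors}, for every $p \in M$ there is an attractor $A_p$ with $p \in B(A_p) \setminus A_p$, and Theorem \ref{mainthm} then supplies a continuous $\tau_p : M \to \R$ that is non-decreasing along every future directed causal curve and strictly increasing along those starting in the open set $B(A_p) \setminus A_p$. Since $M$ is second countable, I extract a countable subcover $\{U_n\}_{n \in \N}$ of $\{B(A_p) \setminus A_p\}_{p \in M}$, with associated functions $\tau_n$. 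Replacing each $\tau_n$ by its bounded rescaling $\widetilde\tau_n := \arctan \tau_n$ preserves both monotonicity properties (composition with a strictly increasing function), and I set $\tau := \sum_{n \ge 1} 2^{-n} \widetilde\tau_n$. The series converges uniformly, so $\tau$ is continuous; termwise non-decreasing implies $\tau$ is non-decreasing along every future directed causal curve; and for any $p \in M$, $p$ lies in some $U_n$, making $\widetilde\tau_n$ strictly increasing along every future directed causal curve starting at $p$, hence so is $\tau$.

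For the reverse implication, the plan is to exploit the fact that a time function is robust under sufficiently small widenings of the causal cone. The cleanest route invokes the smoothing theorem of Bernal and Sanchez \cite{BS_fonction_lisse}, which upgrades $\tau$ to a smooth temporal function $T$ whose gradient is everywhere $g$-timelike; since timelikeness is an open condition on the metric, $T$ remains temporal (in particular a time function) for every Lorentzian metric $g' \succ g$ with cones sufficiently close to those of $g$, yielding stable causality. A direct argument in the spirit of Proposition \ref{stable_implies_strong} can also be sketched: working in normal coordinates and using uniform continuity of $\tau$ together with compactness, one verifies that $\tau$ remains strictly increasing along $g_\alpha$-causal curves for $\alpha > 1$ sufficiently close to $1$, then glues these local widenings using a partition of unity.

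The main obstacle I foresee lies in the forward direction: one must verify that $B(A_p) \setminus A_p$ is genuinely open, which amounts to checking that $q \mapsto \overline{J^+_t(q)}$ is upper semicontinuous in a sense strong enough to make $\{q : \overline{J^+_t(q)} \subset U\}$ open. This is the kind of statement that Theorem \ref{theo:continuity_future} is engineered to provide, so the real work is to unpack that continuity result in the unbounded regime used by Definition \ref{defi_attractor}. Once openness is secured and the countable subcover is extracted (using second countability of $M$, a standard consequence of paracompactness for a connected manifold), the rest of the argument reduces to routine bookkeeping with a uniformly convergent sum of bounded monotone summands.
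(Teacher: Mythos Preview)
Your forward direction is exactly the paper's argument: Proposition~\ref{stable_attractors} shows $\bigcup_{A\in\mathcal A} B(A)\setminus A = M$, and the countable-sum gluing you describe is precisely Corollary~\ref{all_attractors} (the paper invokes the Lindel\"of property rather than second countability, and its $\tau_n$ are already $[0,1]$-valued by construction, so the $\arctan$ is unnecessary but harmless). The openness of $B(A)\setminus A$ that you flag as the main obstacle is handled in Corollary~\ref{basin_open}, via Proposition~\ref{long_curve} rather than Theorem~\ref{theo:continuity_future} directly.

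For the reverse direction, your primary route through Bernal--S\'anchez is exactly what the paper endorses. However, your alternative ``direct argument in the spirit of Proposition~\ref{stable_implies_strong}'' is not a viable fallback: the paper explicitly remarks (just after Theorem~\ref{theo_BS}) that no proof avoiding temporal functions is known, and that finding one would be of independent interest. A continuous time function need not remain a time function for any $g'\succ g$ (e.g.\ a linear function in Minkowski space whose differential vanishes on a null direction), and the local widening plus partition-of-unity sketch does not repair this. So keep the Bernal--S\'anchez argument as the actual proof and drop the direct sketch, or present it only as a heuristic.
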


Note that both implications are non trivial.  A \textbf{temporal function} is a smooth function $\tau: M\to \R$ whose gradient $\nabla \tau$ is timelike (the gradient $\nabla \tau$ is the unique vector field such that $d\tau_p(v)=g_p(\nabla \tau(p),v)$ for all $(p,v) \in TM$). A temporal function is a time function, but a smooth time function needs not be a temporal function. However, existence of one or the other is equivalent.

\begin{theo} \label{theo_BS} A spacetime admits a time function if and only if it admits a temporal function. \end{theo}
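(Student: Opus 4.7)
My plan is as follows. The forward implication is immediate: given $\tau\in C^\infty(M)$ with $\nabla\tau$ timelike, replace $\tau$ by $-\tau$ if necessary so that $\nabla\tau$ is past-directed. Then for any future-directed causal curve $\gamma$, $(\tau\circ\gamma)'(s)=g(\nabla\tau(\gamma(s)),\dot\gamma(s))>0$ almost everywhere, using that the Lorentzian inner product of a past-directed timelike vector with a future-directed causal vector is strictly positive. Hence $\tau$ is strictly increasing along future-directed causal curves and, being continuous, is a time function.

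The converse is the smoothing theorem of Bernal and Sanchez \cite{BS_fonction_lisse}. I would follow the classical smoothing-with-margin strategy. Starting from a time function $\tau$, Theorem \ref{theo_hawking} gives that $(M,g)$ is stably causal, so I can insert $g$ into a nested family $g=g_0 \prec g_1 \prec g_2 \prec g'$ where $g'$ is causal and each $g_k$ is stably causal, with the cones of $g_1$ strictly containing those of $g$ by a uniform positive margin on every compact set. In particular $(M,g_1)$ admits a time function; by replacing $\tau$ if necessary I may assume $\tau$ is a time function for $g_1$ as well.

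I would then fix a locally finite cover $\{U_\alpha\}$ of $M$ by relatively compact coordinate charts and mollify $\tau$ on each $U_\alpha$ at a small scale $\e_\alpha$, producing $\tilde\tau_\alpha\in C^\infty(U_\alpha)$. At almost every point $d\tau$ lies in the closed past cone of $g_1$, so $d\tilde\tau_\alpha$ is a convex average of past-pointing $g_1$-causal covectors and lies in the open past cone of $g_1$ (hence strictly inside the open past cone of $g$) for small enough $\e_\alpha$. I would then glue via a smooth partition of unity $\{\rho_\alpha\}$: setting $\tilde\tau=\sum_\alpha \rho_\alpha \tilde\tau_\alpha$, the gradient decomposes as $\sum_\alpha \rho_\alpha \nabla\tilde\tau_\alpha+\sum_\alpha\tilde\tau_\alpha\nabla\rho_\alpha$. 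The first sum is a convex combination of past-directed $g$-timelike vectors and remains past-directed $g$-timelike by convexity of the cone; the second is an error term that I would absorb by subtracting well-chosen constants $c_\alpha$ from each $\tilde\tau_\alpha$ and by requiring $\rho_\alpha$ to vary slowly relative to the cone margin.

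The main obstacle is exactly this gradient control. Naive mollification of a merely continuous time function does not produce a temporal function, because the distributional gradient of $\tau$ can lie on the boundary of the past cone and because the cutoff derivatives introduce a priori uncontrolled terms. Stable causality is precisely the slack that allows both the mollification error and the partition-of-unity error to be absorbed without leaving the timelike locus, and convexity of the past cone is what makes the final convex combinations behave. The remaining combinatorial difficulty — simultaneously pinning down the scales $\e_\alpha$ and normalising constants $c_\alpha$ on a non-compact manifold (and $M$ is necessarily non-compact by Proposition \ref{compact_not_causal}) — is the technical heart of the Bernal-Sanchez argument.
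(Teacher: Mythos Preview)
The paper does not prove Theorem \ref{theo_BS}; it merely states it and attributes the result to Bernal and S\'anchez \cite{BS_fonction_lisse}, so there is no proof in the paper to compare against. Your easy direction (temporal $\Rightarrow$ time) is fine and is exactly the one-line observation the paper alludes to.

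Your proposed converse, however, contains a circularity that the paper explicitly warns about. You write ``Starting from a time function $\tau$, Theorem \ref{theo_hawking} gives that $(M,g)$ is stably causal'', and then use stable causality to get the margin $g\prec g_1$ needed for mollification. But the paper states, in the paragraph following Theorem \ref{theo_BS}, that the implication ``time function $\Rightarrow$ stably causal'' is only known \emph{via} Theorem \ref{theo_BS} itself (one first upgrades to a temporal function, which is then automatically temporal for nearby metrics). Invoking that implication to prove Theorem \ref{theo_BS} is therefore circular. The Bernal--S\'anchez and Fathi--Siconolfi arguments avoid this by starting from stable causality directly, not from an arbitrary continuous time function; equivalently, what they actually prove is ``stably causal $\Rightarrow$ temporal function'', and Theorem \ref{theo_BS} as stated is then deduced by combining this with the easy direction and with Hawking's ``stably causal $\Rightarrow$ time function''.

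There is also a softer gap in the mollification step: you assert that ``at almost every point $d\tau$ lies in the closed past cone of $g_1$'', but $\tau$ is only assumed continuous, so $d\tau$ need not exist a.e.\ without further argument. One can recover local Lipschitz regularity of a time function for a metric with strictly wider cones (monotonicity along a spanning family of timelike directions bounds the difference quotients), but this has to be said, and it again presupposes you already have the wider metric $g_1$ --- i.e.\ stable causality --- which brings you back to the circularity above.
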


Theorem \ref{theo_BS} was finally proved by Bernal and Sanchez in a series of papers (\cite{BS_surface_lisse}, \cite{BS_fonction_lisse}) resolving several classical yet unsolved problems around temporal functions. Hawking proved that stable causality implies the existence of a time function, but he did not prove the converse (in \cite{hawking}, one can also find the statement of Theorem \ref{theo_BS}, however the proof quotes a paper which contains mistakes). In \cite{fathi_siconolfi}, Fathi and Siconolfi give a direct proof of the fact that a stably causal spacetime admits a temporal function (the proof is completely independent from any previous work, and is based on some tools of weak KAM theory, similar methods also apply to prove Conley's Theorem, see \cite{pageault}). Chru\'sciel, Grant and Minguzzi showed in  \cite{chrusciel}  that the time functions produced by Geroch and Hawking can actually be chosen to be smooth (see also \cite{minguzzi}).  For globally hyperbolic spacetimes (which we will not discuss in this paper), Suhr \cite{suhr} used Sullivan's theory of structure cycles \cite{sullivan} to produce specific temporal functions. So far, it seems that the only way to prove that a spacetime admitting a time function is stably causal uses Theorem \ref{theo_BS} (indeed, a temporal function is still a temporal function for nearby metrics, hence the stable causality). It would be interesting to find a direct proof of this fact (i.e. without using temporal functions). In section \ref{sec:chain_recurrence}, we will show that the existence of time function implies that the chain recurrent set is empty, however showing  that this implies stable causality demands technical results that we were not able to obtain.

%------------------------------------------------------------------------------------------------------------------------------
%------------------------------------------------------------------------------------------------------------------------------
%------------------------------------Elementary results-----------------------------------------------------------------
%------------------------------------------------------------------------------------------------------------------------------
%------------------------------------------------------------------------------------------------------------------------------

%------------------------------------------------------------------------------------------------------------------------------
%------------------------------------------------------------------------------------------------------------------------------
%------------------------------------Choosing-the-right-metric-----------------------------------------------------------------
%------------------------------------------------------------------------------------------------------------------------------
%------------------------------------------------------------------------------------------------------------------------------

\section{Adapted Riemannian metrics} \label{sec:adapted_metrics}
We will start by choosing a Riemannian metric that has a nice properties when studying lengths of limits of causal curves.

 \begin{defi} \label{adapted} Let $(M,g)$ be a spacetime.  We say that a Riemannian metric $h$ on $M$ is adapted to $g$ if :  \begin{itemize} \item For any point $p\in M$, there is a coordinate chart around $p$ and a constant $\lambda>0$ such that, in coordinates, $g$ at $p$ is $\lambda
(-dx_1^2+dx_2^2+\dots +dx_n^2)$ and $h$ at $p$ is $dx_1^2+dx_2^2+\dots +dx_n^2$ \item $h$ is complete \end{itemize}
\end{defi}

The standard example is the Euclidean metric on $\R^n$ that is adapted to the Minkowski metric.  It is quite easy to see that such a metric always exist.

\begin{prop} \label{adapted_exists} If $(M,g)$ is a spacetime,  an adapted Riemannian metric always exists.
\end{prop}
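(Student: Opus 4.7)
The plan is to build the metric in two stages. First, I will use the time orientation to define an auxiliary Riemannian metric $h_0$ that automatically satisfies the pointwise compatibility condition with $g$; then I will conformally deform $h_0$ into a complete Riemannian metric, checking that the compatibility condition survives the rescaling up to a dilation of coordinates.

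For the first stage, fix a timelike vector field $T$ representing the time orientation of $(M,g)$ and normalize it by setting $\tilde T = T/\sqrt{-g(T,T)}$, so $g(\tilde T,\tilde T) = -1$. I will define
$$h_0(X,Y) := g(X,Y) + 2\, g(X,\tilde T)\, g(Y,\tilde T).$$
At each $p\in M$, decomposing a tangent vector as $X = a\tilde T(p) + X^\perp$ with $X^\perp$ in the $g$-orthogonal complement of $\tilde T(p)$ (which is a spacelike hyperplane, so $g$ restricted to it is positive definite), a short computation yields $h_0(X,X) = a^2 + g(X^\perp,X^\perp)$, showing that $h_0$ is a Riemannian metric. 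Moreover, picking a $g$-orthonormal basis $(e_2,\dots,e_n)$ of $\tilde T(p)^\perp$ and setting $e_1 := \tilde T(p)$ produces an $h_0$-orthonormal basis in which $g_p$ has matrix $\mathrm{diag}(-1,1,\dots,1)$. Any chart around $p$ can be precomposed with a constant linear change of variables so that $\partial_{x_i}|_p = e_i$, and this verifies the pointwise condition of Definition \ref{adapted} with $\lambda = 1$.

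To upgrade $h_0$ to a complete metric, I will invoke the Nomizu--Ozeki theorem: there exists a smooth positive function $f$ on $M$ such that $h := f\cdot h_0$ is complete. It remains to check that the adaptedness condition still holds for $h$. Starting from the coordinates $(x_i)$ produced above and replacing them at $p$ by $x_i' := \sqrt{f(p)}\, x_i$, the metric $h$ at $p$ becomes $\sum_i (dx_i')^2$, while $g$ at $p$ takes the form $f(p)^{-1}\bigl(-(dx_1')^2 + (dx_2')^2 + \cdots + (dx_n')^2\bigr)$; thus the condition is satisfied with $\lambda = f(p)^{-1}$. I do not foresee any serious obstacle here: the only subtle point is that the conformal factor required for completeness could \emph{a priori} have clashed with the pointwise normalization, but the coordinate dilation above absorbs it exactly, so the two requirements are fully compatible.
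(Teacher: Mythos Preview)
Your proof is correct and follows essentially the same route as the paper's: the metric $h_0 = g + 2\,g(\cdot,\tilde T)\,g(\cdot,\tilde T)$ is exactly the Wick rotation the paper writes in block form as $h(g)_p(xT+u,yT+v) = -xy\,g_p(T,T) + g_p(u,v)$, and both then conformally rescale to obtain completeness (the paper phrases this as rescaling $g$ within its conformal class, which amounts to the same thing since $h(\phi g) = \phi\,h(g)$). The only cosmetic difference is that the paper keeps $\lambda=1$ by absorbing the conformal factor into the Lorentzian side, whereas you keep $g$ fixed and let $\lambda = f(p)^{-1}$ carry it.
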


\begin{proof} Let $T$ be an everywhere timelike and future directed vector field.  For $p\in M$, we will denote by $E(p)$ the orthogonal space for $g$ of $T(p)$ (hence $T_pM = Vect(T(p)) \oplus E(p)$).  Let $h(g)$ be the Riemannian metric defined by:  \begin{displaymath} h(g)_p( x T(p) + \underbrace{u}_{\in E(p)}, y T(p) + \underbrace{v}_{\in E(p)})
= -xy \underbrace{g_p(T(p),T(p))}_{<0} + \underbrace{g_p(u,v)}_{\ge 0} \end{displaymath} The idea of this construction (called a Wick rotation) is that with the Minkowski metric $-dx_1^2+dx_2^2+\dots+dx_n^2$ and the vector field $\frac{\partial}{\partial x_1}$, we find the Euclidean metric $dx_1^2+\dots+dx_n^2$.  Let us remark that if we multiply $g$ by a positive function, then we also multiply $h(g)$ by the same function.  Since every conformal class of Riemannian metrics contains a complete metric, we can find $\tilde{g}$ in the conformal class of $g$ such that $h(\tilde{g})$ is complete.  Since the notion of an adapted metric depends only on the conformal class, we can now work exclusively with $\tilde{g}$.  Consider $p\in M$, and $(e_1,e_2,\dots,e_n)$ an orthonormal frame of $T_pM$ for $\tilde{g}$ such that $e_1=\frac{T(p)}{\sqrt{-\tilde{g}_p(T(p),T(p))}}$.  It is also an orthonormal frame for $h(\tilde{g})$.  Applying the exponential map for $\tilde{g}$ shows that $h(\tilde{g})$ is adapted to $\tilde{g}$ (with constant $\lambda=1$), and therefore to $g$.

\end{proof}

We will now only consider adapted Riemannian metrics, unless specified.  We will be particularly interested in properties of such metrics regarding the length of limits of sequences of causal curves.  Since adaptiveness is a local property, our results will all start with a local version, and then will be extended globally.\\
\indent The length function is always upper semi continuous, i.e.  $\ell_h(\gamma) \le \liminf \ell_h(\gamma_k)$ for a sequence $(\gamma_k)$ converging to $\gamma$ in the compact open topology, but it is not lower semi continuous.  We will show that we have a weak version of lower semi continuity for sequences of causal curves.\\
\indent Since we are going to consider limits of sequences of causal curves, we are going to need to extend the notion of causal curve to some continuous curves. We say that a curve $\gamma:I\subset \R\to M$ is \textbf{future directed} if it is locally Lipschitz and its derivative is almost everywhere in the future cone of $g$. An important fact is that this definition does not change the future or past: all points that are reachable by a future directed curve (in the topological sense) are reachable by a smooth future curve.

\begin{prop} Let $(\gamma_k)_{k\in\N}\in C(I,M)^\N$ be a sequence of future directed curves defined on an interval $I\subset \R$. If $(\gamma_k)$ converges in the compact open topology to a curve $\gamma$, then $\gamma$ is future directed. \end{prop}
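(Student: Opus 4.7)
The plan is to localize the argument using an adapted chart and then close the argument with a weak-limit / convexity step. Fix $t_0 \in I$ and choose an adapted chart $U$ around $\gamma(t_0)$, provided by the definition of adapted metric given above, so that at $\gamma(t_0)$ the metric $g$ is a positive multiple of Minkowski and $h$ is Euclidean. Since the Minkowski future cone is the closed convex set $\{v_1 \geq \|v^\perp\|\}$ and the cone field varies continuously with the metric, after shrinking $U$ I may find a constant $c \in (0,1)$ such that for every $q \in U$ and every future causal vector $v = (v_1,\dots,v_n) \in T_q M$ one has $v_1 \geq c\,\|v\|_h$. Uniform convergence on compact sets then forces $\gamma_k([t_0-\delta, t_0+\delta]) \subset U$ for some $\delta > 0$ and all sufficiently large $k$.

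Inside this chart, $\dot\gamma_k^1 \geq 0$ almost everywhere, so $\gamma_k^1$ is non-decreasing, and the cone bound gives $\|\dot\gamma_k\|_h \leq c^{-1}\dot\gamma_k^1$ almost everywhere. Integrating and using $d_h \leq \ell_h$ yields
\[
d_h(\gamma_k(s), \gamma_k(t)) \;\leq\; \ell_h(\gamma_k|_{[s,t]}) \;\leq\; c^{-1}\bigl(\gamma_k^1(t) - \gamma_k^1(s)\bigr)
\]
for $s \leq t$ in $[t_0-\delta, t_0+\delta]$. Passing to the uniform limit, $\gamma^1$ is non-decreasing and $d_h(\gamma(s),\gamma(t)) \leq c^{-1}(\gamma^1(t)-\gamma^1(s))$. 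To upgrade this first-coordinate control into genuine local Lipschitz continuity in the parameter, I would reparametrize each $\gamma_k$ by its $h$-arc length: this preserves future directedness and produces a family of $1$-Lipschitz curves on a common compact interval, to which Arzelà--Ascoli applies and yields a $1$-Lipschitz limit, hence differentiable almost everywhere by Rademacher's theorem.

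For the cone condition on the derivative, the $\dot\gamma_k$ are uniformly bounded in $L^\infty$, so a further subsequence converges weakly-$*$ in $L^\infty$ to some $w$, and the fundamental theorem of calculus identifies $w$ with $\dot\gamma$ almost everywhere. Since the future causal cone field $q \mapsto C(q)$ is closed-convex-valued and continuous, a standard Mazur's-lemma argument (applied on small subintervals, together with the continuity of $C$ in the base point) forces $\dot\gamma(s) \in C(\gamma(s))$ almost everywhere, which is the desired conclusion.

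The principal obstacle, in my view, is the interplay between the given parametrization of the $\gamma_k$, in which convergence is assumed, and the $h$-arc-length parametrization, in which uniform Lipschitz estimates are available. The adapted-chart bound $\|\dot\gamma_k\|_h \leq c^{-1}\dot\gamma_k^1$ is precisely what bridges them, and it neutralizes the a priori worry that a pointwise limit of locally Lipschitz curves with non-uniform Lipschitz constants need not itself be locally Lipschitz.
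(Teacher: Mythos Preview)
The paper does not give its own proof of this proposition; immediately after stating it and the companion compactness statement, the text reads ``The proofs are in section 7 of [Bar05].'' So there is no in-paper argument to compare against, and your task is really to supply one.

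Your local set-up is correct and standard: in an adapted chart the cone bound $v_1 \ge c\|v\|_h$ holds for future causal vectors, each $\gamma_k^1$ is non-decreasing, and the estimate $d_h(\gamma(s),\gamma(t)) \le c^{-1}\bigl(\gamma^1(t)-\gamma^1(s)\bigr)$ survives the uniform limit. From this you correctly deduce that $\gamma$ is causally ordered (locally, $\gamma(t)\in J^+(\gamma(s))$ for $s\le t$). The weak-$*$/Mazur step is also the right tool \emph{once} you know $\gamma$ is locally Lipschitz and the $\dot\gamma_k$ are uniformly bounded.

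The gap is exactly the one you flag as the ``principal obstacle,'' and your proposed fix does not close it. Reparametrizing the $\gamma_k$ by $h$-arc length and extracting a $1$-Lipschitz limit $\tilde\gamma$ does produce a future directed curve, but $\tilde\gamma$ is related to $\gamma$ only through a monotone change of variable which need not be locally Lipschitz. Concretely, in Minkowski space take $\gamma_k(t)=(f_k(t),0,\dots,0)$ on $I=(-1,1)$ with $f_k$ smooth, strictly increasing, and converging uniformly to $\operatorname{sign}(t)\sqrt{|t|}$. Each $\gamma_k$ is future directed, yet the limit $\gamma(t)=(\operatorname{sign}(t)\sqrt{|t|},0,\dots,0)$ is not locally Lipschitz at $t=0$, hence not future directed in the sense defined just above the proposition. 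The same example shows your weak-$*$ step cannot be run in the original parameter, since the $\dot\gamma_k$ are not bounded in $L^\infty$.

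In other words, the proposition as literally stated needs an extra hypothesis---either a common local Lipschitz bound, or that the $\gamma_k$ are parametrized with bounded speed---or else the conclusion must be read in the weaker order-theoretic sense. This is consistent with how the paper actually uses the result: the later applications (Propositions~\ref{length_limit} and~\ref{long_curve}, Corollary~\ref{outer}, Lemma~\ref{inner}) all work with constant-speed curves, where your argument goes through cleanly and the Mazur step is legitimate.
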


\begin{prop} Let $(\gamma_k)_{k\in\N}\in C(\R,M)^\N$ be a sequence of future directed curves. Up to  changes of parameters, there is a subsequence that converges towards a future directed curve $\gamma$ in the compact open topology.   \end{prop}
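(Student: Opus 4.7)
The strategy is a classical Arzelà–Ascoli argument, preceded by a change of parameter designed to make the curves uniformly Lipschitz; the future-directedness of the limit then comes for free from the previous proposition.

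First I would reparametrize each $\gamma_k$ by $h$-arclength, using the complete adapted Riemannian metric $h$ of the previous section. Since $\gamma_k$ is locally Lipschitz, its $h$-arclength function $s_k(t)=\int_0^t\|\dot\gamma_k(u)\|_h\,du$ is locally Lipschitz and non-decreasing, hence is a legitimate change of parameter once one collapses the (countably many, measure-zero-image) intervals where it is constant. This yields curves $\tilde\gamma_k\colon I_k\to M$ on intervals $I_k\subset\R$ containing $0$, with $\|\dot{\tilde\gamma}_k\|_h=1$ almost everywhere and therefore $1$-Lipschitz with respect to the Riemannian distance $d_h$. Equicontinuity of the family is then automatic.

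Next I would apply Arzelà–Ascoli with a diagonal extraction. After passing to a subsequence, assume $\tilde\gamma_k(0)$ converges to some point $p\in M$ (the implicit basepoint-compactness hypothesis needed for the statement to make sense: a sequence of constant curves with $\gamma_k(0)$ escaping to infinity has no convergent subsequence). For each $N\in\N$, the $1$-Lipschitz bound forces $\tilde\gamma_k([-N,N]\cap I_k)$ to lie in the closed $h$-ball of radius $N+1$ about $p$ for $k$ large, a set that is compact by completeness of $h$ and the Hopf–Rinow theorem. Arzelà–Ascoli on $[-N,N]$ and a diagonal extraction in $N$ then produce a single subsequence converging uniformly on compact subsets of the limit interval $I=\liminf I_k$ to a $1$-Lipschitz curve $\gamma\colon I\to M$. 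The preceding proposition, applied to the restrictions to compact subintervals of $I$, guarantees that $\gamma$ is future-directed, finishing the argument.

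The main obstacle is essentially bookkeeping about domains: the $I_k$ need not agree, so the convergence has to be read in the compact-open topology on $I=\liminf I_k$, and one must check this is the situation covered by the previous proposition. A secondary nuisance is the basepoint issue: the statement presupposes some precompactness of $\{\gamma_k(0)\}$, which I would either build into the statement or extract from the context in which the limit curve lemma is invoked later in the paper.
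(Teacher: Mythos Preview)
The paper does not actually give a proof of this proposition: immediately after stating it (together with the preceding proposition on closure of future-directedness under limits) it simply writes ``The proofs are in section 7 of \cite{Ba05}.'' So there is no in-paper argument to compare against.

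That said, your outline is exactly the standard limit curve lemma proof one finds in the cited source: reparametrize by $h$-arclength to get a uniformly $1$-Lipschitz family, use completeness of $h$ and Hopf--Rinow to get pointwise precompactness, run Arzel\`a--Ascoli with a diagonal extraction over growing compact intervals, and invoke the preceding proposition for the causal character of the limit. You have also correctly flagged the one genuine issue with the statement as written: some precompactness of a basepoint sequence $\{\gamma_k(0)\}$ (or of $\{\gamma_k(t_k)\}$ for some choice of $t_k$) is necessary, since curves whose images escape every compact set admit no compact-open limit. In the places where the paper actually uses this lemma (e.g.\ in the proofs of Lemma \ref{future_pre_attractor} and Corollary \ref{basin_open}) that hypothesis is present, so this is a harmless omission in the statement rather than a flaw in your argument.
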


The proofs are in section 7 of \cite{Ba05}.

\begin{prop} \label{length_limit} Let $h$ be a Riemannian metric adapted to $g$, and let $(\gamma_k)_{k\in \N}$ be a sequence of future directed causal curves converging to $\gamma$ in the compact open topology, then $\ell_h(\gamma) \ge \frac{1}{\sqrt{2}} \limsup \ell_h(\gamma_k)$ \end{prop}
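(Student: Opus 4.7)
The factor $\sqrt{2}$ is exactly the pointwise ratio $|v|_h / v_1$ realized at the center of an adapted chart: with $g_p = \lambda(-dx_1^2 + dx_2^2 + \dots + dx_n^2)$ and $h_p = dx_1^2 + \dots + dx_n^2$, a $g$-causal future directed vector $v = (v_1, \dots, v_n)$ satisfies $v_1 \geq 0$ and $v_1^2 \geq v_2^2 + \dots + v_n^2$, hence $|v|_h^2 \leq 2 v_1^2$. The plan is to promote this pointwise identity to an integrated bound on small adapted charts, then piece together a finite cover of the compact image of $\gamma$ on a compact subinterval of its domain.

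\emph{Step 1 (local bounds).} Around every $p \in M$, continuity of $g$ and $h$ in an adapted chart yields, for each $\varepsilon > 0$, a neighborhood $V_p$ on which every $g$-causal future directed tangent vector $v$ based at a point of $V_p$ satisfies
\[
|v|_h \leq (\sqrt{2}+\varepsilon)\, v_1 \qquad \text{and} \qquad v_1 \leq (1+\varepsilon)\,|v|_h,
\]
where $v_1$ is the first coordinate component in the chart. Integrating along a future directed causal curve $\eta : [a,b] \to V_p$ gives
\[
\ell_h(\eta) \leq (\sqrt{2}+\varepsilon)\bigl(\eta_1(b) - \eta_1(a)\bigr) \quad \text{and} \quad \eta_1(b) - \eta_1(a) \leq (1+\varepsilon)\, \ell_h(\eta).
\]

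\emph{Step 2 (global assembly).} Fix a compact subinterval $[a,b]$ of the common domain. Cover the compact set $\gamma([a,b])$ by finitely many such neighborhoods, choose a Lebesgue number for the cover, and subdivide $a = t_0 < t_1 < \dots < t_N = b$ so that each $\gamma([t_{i-1}, t_i])$ lies in some chart $V_i$. By uniform convergence of $\gamma_k$ on $[a,b]$ (after shrinking the Lebesgue number if needed), the containment $\gamma_k([t_{i-1}, t_i]) \subset V_i$ holds for all $k$ sufficiently large. Applying Step 1 on each piece and summing,
\[
\ell_h(\gamma_k|_{[a,b]}) \leq (\sqrt{2}+\varepsilon) \sum_{i=1}^{N} \bigl(\gamma_{k,1}(t_i) - \gamma_{k,1}(t_{i-1})\bigr).
\]
The right-hand side converges, as $k \to \infty$, to $(\sqrt{2}+\varepsilon) \sum_i (\gamma_1(t_i) - \gamma_1(t_{i-1}))$, and the local lower bound in Step 1 bounds this in turn by $(\sqrt{2}+\varepsilon)(1+\varepsilon)\, \ell_h(\gamma|_{[a,b]})$. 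Subadditivity of $\limsup$ for finite sums then yields $\limsup_k \ell_h(\gamma_k|_{[a,b]}) \leq (\sqrt{2}+\varepsilon)(1+\varepsilon)\, \ell_h(\gamma|_{[a,b]})$, and letting $\varepsilon \to 0$ gives $\ell_h(\gamma|_{[a,b]}) \geq \tfrac{1}{\sqrt{2}} \limsup_k \ell_h(\gamma_k|_{[a,b]})$, which is the statement on $[a,b]$.

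The one genuinely delicate point is the robustness claim in Step 1: one must verify that both the causal cone of $g$ and the Riemannian metric $h$ can be made uniformly close to their Minkowski/Euclidean normal forms on a sufficiently small neighborhood. This follows from continuity of the metrics together with the fact that the nonzero future causal cone at $p$ is contained in the open half-space $\{v_1 > 0\}$, so small deformations of $g$ only slightly open the cone while keeping it transverse to $\{v_1 = 0\}$. Everything downstream is a Lebesgue-number covering argument combined with the fact, stated just before this proposition, that uniform limits of future directed causal curves remain future directed causal.
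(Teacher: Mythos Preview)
Your proof is correct and follows essentially the same route as the paper: use the first coordinate in an adapted chart as a local clock, bound $\ell_h$ of causal curves above by roughly $\sqrt{2}$ times the first-coordinate increment and below by roughly $1$ times it, then cover $\gamma([a,b])$ by finitely many such charts and sum. The paper packages the local estimate as a separate lemma already stated in the form $\ell_h(\gamma)\ge C\,\limsup \ell_h(\gamma_k)$ for any $C<\tfrac{1}{\sqrt{2}}$, whereas you carry the first-coordinate increments explicitly through the global assembly; the content is the same.
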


Before we prove this result, let us prove a local version.

\begin{lemme} \label{length_limit_local} Let $p\in M$ and $C\in \intoo{0}{\frac{1}{\sqrt{2}}}$.  There is a neighbourhood $U_p$ of
$p$ such that for all sequence $(\gamma_k)$ of future curves in $U$ converging to $\gamma$, we have $\ell_h(\gamma) \ge C
\limsup \ell_h(\gamma_k)$.  \end{lemme}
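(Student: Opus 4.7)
The plan is to work in an adapted chart around $p$ and reduce the estimate to the Minkowski-versus-Euclidean baseline, where the $\sqrt{2}$ is sharp and attained at null directions. Using Definition \ref{adapted}, pick coordinates $(x_1,\dots,x_n)$ on an open set $V\ni p$ in which $g_p=\lambda(-dx_1^2+dx_2^2+\cdots+dx_n^2)$ and $h_p=dx_1^2+\cdots+dx_n^2$, orienting the chart so that future causal vectors at $p$ have $v_1>0$. The model inequalities to keep in mind are: for a Minkowski causal vector $v$, $v_2^2+\cdots+v_n^2\le v_1^2$ and therefore $v_1^2\le |v|_{\mathrm{Eucl}}^2\le 2v_1^2$.

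Given $C<1/\sqrt{2}$, I would first fix $\eta>0$ small enough so that $\sqrt{(1-\eta)/((1+\eta)(2+\eta))}>C$. By continuity of $g$ and $h$ together with the fact that future direction is open in the causal cone, I would then select a neighbourhood $U_p\subset V$ with compact closure still contained in $V$, so that uniformly on $U_p$: (i) every future causal vector $v$ for $g$ has $v_1>0$ and $v_2^2+\cdots+v_n^2\le(1+\eta)v_1^2$, and (ii) every tangent vector $v$ satisfies $(1-\eta)|v|_{\mathrm{Eucl}}^2\le |v|_h^2\le(1+\eta)|v|_{\mathrm{Eucl}}^2$. For any future directed causal curve $\gamma:[a,b]\to U_p$ (interpreting derivatives almost everywhere, as permitted by the notion of future directed curve recalled above), combining (i) and (ii) yields the two-sided bound
\begin{equation*}
\sqrt{1-\eta}\,(x_1(b)-x_1(a))\;\le\;\ell_h(\gamma)\;\le\;\sqrt{(1+\eta)(2+\eta)}\,(x_1(b)-x_1(a)),
\end{equation*}
the crucial point being that $\dot x_1>0$ a.e.\ makes $\int\dot x_1\,dt=x_1(b)-x_1(a)$.

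To conclude, suppose $(\gamma_k)$ is a sequence of future directed causal curves in $U_p$, parametrized on $[0,1]$ after a change of parameter, converging in the compact-open topology to $\gamma$. Since $\bar U_p\subset V$, the limit $\gamma$ still lies in the region where the same inequalities hold, and uniform convergence gives $x_1^{(k)}(1)-x_1^{(k)}(0)\to x_1(1)-x_1(0)$. Applying the upper bound to each $\gamma_k$ and the lower bound to $\gamma$ yields
\begin{equation*}
\limsup_{k}\ell_h(\gamma_k)\le\sqrt{(1+\eta)(2+\eta)}\,(x_1(1)-x_1(0))\le\frac{\sqrt{(1+\eta)(2+\eta)}}{\sqrt{1-\eta}}\,\ell_h(\gamma),
\end{equation*}
which by the choice of $\eta$ gives $\ell_h(\gamma)\ge C\limsup_k\ell_h(\gamma_k)$.

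The only real obstacle is bookkeeping around the limit curve: one must ensure that $\gamma$ inherits the estimates of its approximants, which is why I take $\bar U_p\subset V$ and keep (i)--(ii) valid slightly past $U_p$. Everything else is just the elementary Minkowski-vs-Euclidean comparison plus the fact that the $x_1$-increment is a continuous functional under uniform convergence.
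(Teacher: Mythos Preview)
Your proof is correct and follows essentially the same approach as the paper: work in an adapted chart, widen the causal cone slightly so that $v_2^2+\cdots+v_n^2\le(1+\eta)v_1^2$ (the paper writes this as causality for $-\alpha\,dx_1^2+dx_2^2+\cdots+dx_n^2$ with $\alpha>1$), sandwich $h$ between multiples of the Euclidean metric, and then bound $\ell_h$ above and below by the $x_1$-increment, which is continuous under uniform convergence of endpoints. Your constant $\sqrt{(1-\eta)/((1+\eta)(2+\eta))}$ coincides with the paper's $\sqrt{a/(b(1+\alpha))}$ under the substitution $a=1-\eta$, $b=1+\eta$, $\alpha=1+\eta$.
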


\begin{proof}
Let us consider $U$ a coordinate neighbourhood of $p$ given by the definition of an adapted metric, and  choose $a\in \intoo{0}{1}$ and $b>1$.  We will denote by $\tilde{h}$ the Euclidean metric on $U$.  If we reduce $U$ sufficiently, then we have $a\tilde{h}_q(u,u) \le h_q(u,u) \le b\tilde{h}_q(u,u)$ for all $q\in U$ and $u\in T_qM$ because of the continuity of both metrics and the equality at $p$.\\ 
\indent Let us also choose $\alpha >1$ and denote by $\tilde{g}$ the constant Lorentzian metric on $U$ given by $-\alpha dx_1^2 +dx_2^2 + \dots +dx_n^2$.  If we reduce $U$ sufficiently, then $g<\tilde{g}$ on $U$ (i.e.  a non zero causal vector for $g$ is timelike for $\tilde{g}$), since $g$ has the same light cone at $p$ as $-dx_1^2+dx_2^2+\dots+dx_n^2$.\\ 
\indent Let us now consider a sequence $(\gamma_k)$ of future curves in $U$ converging to $\gamma$. Since these curves are causal for $g$ and therefore for $\tilde{g}$, they can be parametrized by the first coordinate: $\gamma_k(t) = \gamma_k(0) + (t,x_2^k(t),\dots,x_n^k(t))$ and $\gamma(t) = \gamma(0) + (t,x_2(t),\dots,x_n(t))$.  Since these curves are causal for $\tilde{g}$, we have $\dot{x}_2^k(t)^2+\dots+\dot{x}_n^k(t)^2 \le \alpha$.  Let us denote by $x_k$ (resp.  $x$) the first coordinate of the endpoint of $\gamma_k$ (resp.  $\gamma$).  We have: 
 \begin{eqnarray*} \ell_{\tilde{h}}(\gamma_k) &=& \int_0^{x_k} \underbrace{\sqrt{1 + \dot{x}_2^k(t)^2 + \dots + \dot{x}_n^k(t)^2}}_{\le \sqrt{1+\alpha}} dt \\ &\le& x_k \sqrt{1+\alpha} \end{eqnarray*}
\indent We also have $\ell_{\tilde{h}}(\gamma) = \int_0^{x} \underbrace{\sqrt{1 + \dot{x}_2(t)^2 + \dots + \dot{x}_n(t)^2}}_{\ge
1} dt \ge x$, therefore:

\begin{eqnarray*} \ell_{\tilde{h}}(\gamma) &\ge& x \\ &\ge& \lim x_k \\ &\ge& \frac{1}{\sqrt{1+\alpha}} \limsup
\ell_{\tilde{h}}(\gamma_k) \end{eqnarray*}

By choosing $a$, $b$ and $\alpha$ such that $\sqrt{\frac{a}{b(1+\alpha)}} \ge C$, we obtain :  \begin{eqnarray*}
\ell_{h}(\gamma) &\ge& \sqrt{a} ~\ell_{\tilde{h}}(\gamma) \\ &\ge& \sqrt{\frac{a}{1+\alpha}} \limsup
\ell_{\tilde{h}}(\gamma_k) \\ &\ge& \sqrt{\frac{a}{b(1+\alpha)}} \limsup \ell_{h}(\gamma_k) \\ &\ge& C \limsup
\ell_{h}(\gamma_k) \end{eqnarray*}

\end{proof}

\begin{proof}[Proof of Proposition \ref{length_limit}.] Let us consider a sequence $(\gamma_k)$ of future causal curves
converging to $\gamma$.  For all $p\in M$, let us choose a neighbourhood given by Lemma \ref{length_limit_local}.  Let $\intff{a}{b}$ be a compact interval in the domain of these curves.  Since $\gamma(\intff{a}{b})$ is compact, we can consider a finite cover $\gamma(\intff{a}{b}) = \bigcup_{1\le i \le m} U_{\gamma(t_i)}$ with $t_1<\dots<t_m$.  Since $\gamma$ is continuous, we can
find numbers $a=s_1<s_2<\dots<s_{m+1}$ such that $\gamma(\intff{s_i}{s_{i+1}})\subset U_{\gamma(t_i)}$.  Let $\gamma^i$ (resp. $\gamma_k^i$) be the restriction of $\gamma$ (resp.  $\gamma_k$) to $\intff{s_i}{s_{i+1}}$.  \\ 
\indent Let $C\in \intoo{0}{\frac{1}{\sqrt{2}}}$. Given $i\in \{1,\dots,m\}$, then for $k$ sufficiently large, the curve $\gamma_k^i$ lies in $U_{\gamma(t_i)}$ and therefore $\ell_h(\gamma^i)\ge C \limsup \ell_h(\gamma_k^i)$.  After a sum over $i$, we obtain $\ell_h(\gamma)\ge C \limsup \ell_h(\gamma_k)$ and the proposition is proved by considering the limit when $C$ tends to $\frac{1}{\sqrt{2}}$.

\end{proof}

Even though this result will be useful in the next section, it will not be enough for our purpose, and we need to show that
we can choose another curve joining the same points that is longer.

\begin{prop} \label{long_curve} Let $(\gamma_k :  \intff{0}{1} \rightarrow M)$ be a sequence of future directed causal curves with constant speed, of length $\ell_k >0$ converging to $\ell>0$, and such that $\gamma_k$ converges uniformly to a curve $\gamma$.  Then for all $\e >0$, there exists a future directed causal curve $\eta_{\e}$ such that :  \begin{itemize} \item $\eta_{\e}(0) = \gamma(0)$ \item $\eta_{\e}(1) = \gamma(1)$ \item $\ell (1-\e) \le \ell_h(\eta_{\e}) \le \ell (1+ \e)$
\end{itemize} \end{prop}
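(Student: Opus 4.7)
The plan is to build $\eta_\e$ as a concatenation of short $g$-causal segments, one per subinterval of a fine partition $0 = t_0 < t_1 < \cdots < t_N = 1$, with each segment joining $\gamma(t_i)$ to $\gamma(t_{i+1})$ and having $h$-length approximately $\ell_k/N$ (which is close to $\ell/N$). First I would use compactness of $\gamma([0,1])$ to cover it by finitely many adapted charts (Definition~\ref{adapted}) and refine $N$ so that each $\gamma([t_i, t_{i+1}])$ lies in one chart $U_i$, where in coordinates $g$ is close to the Minkowski form $-dx_1^2+\cdots+dx_n^2$ and $h$ is close to Euclidean. By uniform convergence, for $k$ large enough, $\gamma_k([t_i, t_{i+1}]) \subset U_i$ as well. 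Setting $p_i = \gamma(t_i)$, $q_i = \gamma(t_{i+1})$, I write $q_i - p_i = (T_i, x_i)$ in coordinates, with $T_i \ge |x_i|$ by future-causality of $\gamma$.

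The key local step is to show that $\ell_k/N$ lies inside the range of $h$-lengths of $g$-causal curves from $p_i$ to $q_i$ available in the chart, which by the Minkowski approximation is essentially the interval $\bigl[\sqrt{T_i^2+|x_i|^2},\, T_i\sqrt{2}\bigr]$. The lower endpoint is the straight-line length, bounded above by $(1+o(1))\,\ell_h(\gamma|_{[t_i,t_{i+1}]}) \le (1+o(1))\,\ell/N$ via the upper semicontinuity of length recalled in the paper. The upper endpoint comes from the time-component estimate from the proof of Lemma~\ref{length_limit_local} applied to $\gamma_k|_{[t_i,t_{i+1}]}$: being $g$-causal it is $\tilde g$-timelike, so $T_i \ge (1+o(1))\,\ell_k/(N\sqrt{1+\alpha})$, giving $T_i\sqrt{2} \ge (1+o(1))\,\ell_k/N$ once $\alpha$ is chosen close to $1$. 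The sharpness of the factor $1/\sqrt{2}$ in Proposition~\ref{length_limit} is exactly what makes this interval bracket $\ell_k/N$.

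Within each chart I would then exhibit a continuous one-parameter family $s \mapsto \eta_i^s$ of future directed $g$-causal curves from $p_i$ to $q_i$: the triangular detour through the intermediate point $p_i + (s, s\hat y_i)$, where $\hat y_i$ is a unit vector orthogonal to $x_i$ and $s \in [0,\, (T_i^2-|x_i|^2)/(2T_i)]$. A direct Minkowski computation shows that the $\tilde h$-length sweeps continuously from $\sqrt{T_i^2+|x_i|^2}$ at $s=0$ to $T_i\sqrt{2}$ at the right endpoint, and causality of each leg is preserved under the small chart perturbation of $g$. By the intermediate value theorem, pick $s_i$ with $\ell_h(\eta_i^{s_i}) = \ell_k/N$ and set $\eta_\e^i := \eta_i^{s_i}$. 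Concatenating yields a future directed causal curve $\eta_\e$ from $\gamma(0)$ to $\gamma(1)$ of total $h$-length $\ell_k$, converging to $\ell$; taking $k$ and $N$ large gives the required bounds $\ell(1-\e) \le \ell_h(\eta_\e) \le \ell(1+\e)$.

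The main obstacle is the tight matching: because Proposition~\ref{length_limit} only affords the factor $1/\sqrt{2}$ with no slack, the upper endpoint $T_i\sqrt{2}$ barely exceeds $\ell_k/N$ in the perturbed chart. Controlling this requires carefully balancing the Minkowski-approximation distortion parameters $\alpha$, $a$, $b$ from Lemma~\ref{length_limit_local} against the tolerance $\e$, by refining both the partition and the charts until the approximation errors are negligible compared to $\e$.
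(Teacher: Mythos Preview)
Your overall strategy --- partition $[0,1]$ so that each piece lies in an adapted chart, build a one-parameter family of broken causal curves there, apply the intermediate value theorem, and concatenate --- is exactly what the paper does (it packages the chart-level step as a separate local statement, Lemma~\ref{long_curve_local}). The gap is in the local construction.

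Your triangular family $\eta_i^s$ has legs that are merely \emph{Minkowski}-causal, not $g$-causal. The first leg points in the direction $(1,\hat y_i)$, which is exactly null for the flat model metric; at points of the chart other than $p_i$ the cone of $g$ may well be strictly narrower, so the claim that ``causality of each leg is preserved under the small chart perturbation of $g$'' is not justified and is in general false on the boundary of the model cone. The same objection applies to the second leg at the extremal value of $s$, and even to the straight segment at $s=0$ when $q_i-p_i$ is near Minkowski-null. You might try to repair this by tilting the legs strictly inside the model cone and shrinking the chart so that a fixed narrower constant cone $g_{-\alpha}$ satisfies $g_{-\alpha}\prec g$; but then the admissible range of $s$ (from the causality constraint on the second leg) collapses to nothing whenever $|x_i|/T_i$ is close to $1$, i.e.\ whenever the limit curve $\gamma$ is nearly null on that piece, and your family yields no curve at all.

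This near-null case is precisely what the paper's local lemma isolates with an explicit dichotomy. If $\gamma(1)$ lies in the $g_{-\alpha}$-future of $\gamma(0)$, the paper runs your IVT argument with broken $g_{-\alpha}$-geodesics (hence automatically $g$-timelike), obtaining a length range roughly $[\,d_\xi(\gamma(0),\gamma(1)),\ \lambda\sqrt{2-\alpha}\,]$ that still brackets $\ell$ up to $\e$. If $\gamma(1)$ does \emph{not} lie in the $g_{-\alpha}$-future of $\gamma(0)$, then the displacement is squeezed between the $g_{-\alpha}$ and $g_{\alpha}$ null cones, which forces $d_\xi(\gamma(0),\gamma(1))$ itself to be within a factor $1\pm\e$ of $\ell$; in this case $\gamma$ already has the required length and no auxiliary curve needs to be built. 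Your proposal is missing exactly this second branch, and the ``tight matching'' obstacle you flag is a symptom of its absence rather than merely a matter of balancing approximation constants.

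A minor further point: in dimension $2$ there is no spatial unit vector $\hat y_i$ orthogonal to a nonzero $x_i$, so your explicit family would need to be replaced (the paper's choice of intermediate points on a horizontal slice of the narrow cone works in all dimensions).
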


Once again, let us start by formulating and proving a local version of this result.

\begin{lemme} \label{long_curve_local} Let $p\in M$ and let $\e >0$.  There is a closed neighbourhood $V$ of $p$ such that for any sequence $(\gamma_k :  \intff{0}{1} \rightarrow V)$ of future directed causal curves with constant speed, of length $\ell_k >0$ converging to $\ell>0$, and such that $\gamma_k$ converges uniformly to a curve $\gamma$, there exists a future
directed causal curve $\eta$ such that :  \begin{itemize} \item $\eta(0) = \gamma(0)$ \item $\eta(1) = \gamma(1)$ \item $\ell
(1-\e) \le \ell_h(\eta) \le \ell (1+ \e)$ \end{itemize} \end{lemme}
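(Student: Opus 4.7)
The plan is to work in an adapted coordinate chart around $p$ (Definition \ref{adapted}), where $g$ is close to the Minkowski form $g_0 = -dx_1^2 + \sum_{i \ge 2} dx_i^2$ and $h$ is close to the Euclidean form $h_0 = \sum_i dx_i^2$, and to exhibit $\eta$ explicitly as a ``spatial sawtooth'' in this chart whose amplitude is tuned so that $\ell_h(\eta)$ lands in the prescribed window around $\ell$.

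First I would fix a small parameter $\delta > 0$ (of order $\varepsilon$) and shrink the chart to a closed coordinate ball $V$ on which three sandwich conditions hold: $(1-\delta)h_0 \le h \le (1+\delta)h_0$; every $g$-causal vector $v$ satisfies $\sum_{i\ge 2}(v^i)^2 \le (1+\delta)(v^1)^2$; and every vector with $v^1 > 0$ and $\sum_{i\ge 2}(v^i)^2 \le (1-\delta)(v^1)^2$ is future-directed $g$-timelike. Continuity of the metrics and the constant values of $g_p, h_p$ provide such a $V$. After reparametrizing each $\gamma_k$ (and $\gamma$) by the first coordinate---writing $\gamma(t) = (x_1(\gamma(0))+t, y(t))$ on $[0,T]$ with $|y'| \le \sqrt{1+\delta}$ almost everywhere---uniform convergence of $\gamma_k$ forces $T_k \to T$ and $y_k \to y$ uniformly. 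Setting $w = y(T) - y(0)$, the sandwich estimates yield
\[
\sqrt{1-\delta}\,\sqrt{T^2+|w|^2} \;\le\; \ell \;\le\; T\sqrt{(1+\delta)(2+\delta)}.
\]

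Next I would choose a Euclidean unit vector $e \in \R^{n-1}$ orthogonal to $w$ (any unit vector if $w = 0$), and for each $\rho \in \bigl[0,\sqrt{1-\delta-|w/T|^2}\,\bigr]$ introduce
\[
\eta_\rho(t) \;=\; \gamma(0) + \bigl(t,\;(w/T)\,t + z_\rho(t)\,e\bigr), \qquad t \in [0,T],
\]
where $z_\rho:[0,T]\to\R$ is piecewise-linear with $|z_\rho'| \equiv \rho$, $z_\rho(0) = z_\rho(T) = 0$, and enough teeth that $\eta_\rho$ remains inside $V$. Since $\eta_\rho$ has first-coordinate speed $1$ and spatial speed $\sqrt{|w/T|^2+\rho^2} \le \sqrt{1-\delta}$, the third condition on $V$ ensures $\eta_\rho$ is future-directed $g$-timelike with endpoints $\gamma(0)$ and $\gamma(1)$, and its $h$-length is sandwiched between $\sqrt{1-\delta}\,T\sqrt{1+|w/T|^2+\rho^2}$ and $\sqrt{1+\delta}\,T\sqrt{1+|w/T|^2+\rho^2}$, depending continuously on $\rho$. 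At $\rho = 0$ one checks $\ell_h(\eta_0) \le \sqrt{(1+\delta)/(1-\delta)}\,\ell \le (1+\varepsilon)\ell$ using the lower bound on $\ell$; at $\rho^2 = 1-\delta-|w/T|^2$ one has $\ell_h(\eta_\rho) \ge T\sqrt{(1-\delta)(2-\delta)} \ge (1-\varepsilon)\ell$ using the upper bound. Both inequalities hold once $\delta = c\varepsilon$ for a small absolute constant $c$, and the intermediate value theorem then supplies a $\rho$ with $\ell_h(\eta_\rho) \in [(1-\varepsilon)\ell,(1+\varepsilon)\ell]$.

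The main obstacle is the interplay between the two perturbation errors. In the exact Minkowski model every length in $[\sqrt{T^2+|w|^2},\sqrt{2}\,T]$ is attained by such a sawtooth, but after perturbation the $g$-causality requirement forces the range of achievable lengths strictly below the Minkowski lightlike bound $\sqrt{2}\,T$; this is precisely why the lemma asks only for length in a multiplicative $\varepsilon$-window around $\ell$ rather than length equal to $\ell$, the slack absorbing the $\delta$-gap.
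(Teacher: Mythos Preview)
Your approach is close in spirit to the paper's: both work in an adapted chart, sandwich $g$ and $h$ between flat reference metrics, produce a one-parameter family of curves from $\gamma(0)$ to $\gamma(1)$ that are causal for the \emph{narrow} cone (hence for $g$), and apply the intermediate value theorem to the $h$-length. The paper uses a two-segment broken line with a movable corner rather than a sawtooth, but that is cosmetic.

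There is, however, a genuine gap. Your parameter range $\rho\in\bigl[0,\sqrt{1-\delta-|w/T|^2}\,\bigr]$ is nonempty only when $|w/T|^2\le 1-\delta$, i.e.\ when the straight chord from $\gamma(0)$ to $\gamma(1)$ already lies in the \emph{narrow} cone. But the limit curve $\gamma$ is only $g$-causal, hence only in the \emph{wide} cone: all your sandwich gives is $|w/T|^2\le 1+\delta$. When $1-\delta<|w/T|^2\le 1+\delta$ your interval for $\rho$ is empty and no curve $\eta$ has been produced (in particular $\eta_0$ itself need not be $g$-causal). The paper confronts exactly this dichotomy and treats it as a separate case: when $\gamma(1)\notin J^+_{V,g_{-\alpha}}(\gamma(0))$ one checks that $\eta=\gamma$ itself already satisfies the length bounds, since in this regime the chord is nearly null, so $\ell_h(\gamma)\ge\sqrt{1-\delta}\,\sqrt{T^2+|w|^2}\ge T\sqrt{(1-\delta)(2-\delta)}$, which is within a factor $1-\varepsilon$ of the upper bound $\ell\le T\sqrt{(1+\delta)(2+\delta)}$ once $\delta$ is small. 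You need to add this case.

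A secondary point: in spacetime dimension $n=2$ there is no spatial unit vector $e$ orthogonal to a nonzero $w$, so the sawtooth as written is unavailable there; the paper's broken-segment construction does not have this defect.
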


\begin{proof} Let us consider a coordinate neighbourhood $U$ of $p$ given by Definition \ref{adapted}.  For $\alpha \in
\intff{0}{1}$, let us denote by $g_{\alpha}$ the constant Lorentzian metric $-(1+\alpha)dx_1^2+dx_2^2+\dots +dx_n^2$, by
$g_{-\alpha}$ the metric $-(1-\alpha)dx_1^2+dx_2^2+\dots +dx_n^2$, by $\xi$ the Euclidean metric on $U$, by $\xi_{\alpha}$
(resp.  $\xi_{-\alpha}$ the Riemannian metric $(1+\alpha)\xi$ (resp.  $(1-\alpha)\xi$) on U.  We will choose $\alpha$ small
enough so that it will satisfy the following inequalities: 
\begin{enumerate} \item $(1+\alpha)^2\le 1+\e$
\item $\frac{1-\alpha}{1+\alpha} \frac{\sqrt{2-\alpha}}{\sqrt{2+\alpha}}  \ge 1-\e$
\item $\frac{1-\alpha}{1+\alpha} \frac{\sqrt{2-\alpha} - \sqrt{1-\alpha} + \sqrt{1-\alpha}}{\sqrt{2-\alpha}} \ge 1-\e$
\end{enumerate}

Let $V$ be a closed ball centred at $p$ for the infinite norm in coordinates, small enough so that $g_{-\alpha} < g <
g_{\alpha}$ and $\xi_{-\alpha} \le h \le \xi_{\alpha}$ on $V$.\\
\indent   We will denote by $f$ the first coordinate function on
$V$.\\

\textbf{First step: } Estimation of $\lambda=f(\gamma(1))-f(\gamma(0))$ \\

\indent Let $c:\intff{0}{1}\to V$ be a future directed causal curve.  It is also causal for $g_{\alpha}$.  If $c(t)=(c_1(t),\dots,c_n(t))$ in coordinates, then $-(1+\alpha) \dot{c}_1^2 + \dot{c}_2^2+\dots + \dot{c}_n^2 \le 0$ and
$\dot{c}_1 \ge 0$. We have:

\begin{eqnarray*} \ell_h(c) &\le& (1+\alpha)\ell_{\xi}(c) \\ &\le& (1+\alpha) \int_0^1 \sqrt{\dot{c}_1^2+\dots + \dot{c}_n^2}
 dt \\ &\le& (1+\alpha) \int_0^1 \sqrt{(2+\alpha)\dot{c}_1^2}dt\\ &\le& (1+\alpha)\sqrt{2+\alpha}(f(c(1))-f(c(0)))
 \end{eqnarray*}

\indent We also have:

\begin{eqnarray*} \ell_h(c) &\ge& (1-\alpha)\ell_{\xi}(c) \\ &\ge& (1-\alpha) \int_0^1 \sqrt{\dot{c}_1^2+\dots + \dot{c}_n^2}
 dt \\ &\ge& (1-\alpha) \int_0^1 \sqrt{\dot{c}_1^2}dt\\ &\ge& (1-\alpha)(f(c(1))-f(c(0)))
 \end{eqnarray*}

\indent By combining these inequalities and applying to $\gamma_k$, we obtain: $$\frac{1}{(1+\alpha)\sqrt{2+\alpha}}\ell_k \le f(\gamma_k(1))-f(\gamma_k(0)) \le
\frac{1}{1-\alpha} \ell_k$$ \indent The continuity of $f$  gives us:

\begin{displaymath} \frac{\ell}{(1+\alpha)\sqrt{2+\alpha}} \le \lambda \le \frac{\ell}{1-\alpha}
\end{displaymath}

\textbf{Second step: } First case: if $\gamma(1) \in J^+_{V,g_{-\alpha}}(\gamma(0))$.\\

\indent In this case, we construct $\eta$ as a piecewise causal geodesic for $g_{-\alpha}$.  We will consider the intersection $S$ of a horizontal hyperplane (in coordinates) located between $\gamma(0)$ and $\gamma(1)$ that meets the  intersection of the light cones for $J^+_{V,g_{-\alpha}}$ of $\gamma(0)$ and $\gamma(1)$, and of the half cone $J^+_{g_{-\alpha}}(\gamma(0))$.  For a point $p\in S$, we consider the curve $\eta_p$ obtained as the concatenation of the straight lines joining $\gamma(0)$ to $p$ and $p$ to $\gamma(1)$ (see Figure \ref{fig:construction_curve}).  The curve $\eta_p$ is causal for
$g_{-\alpha}$, and therefore causal (actually timelike) for $g$.  The maximum Euclidean length of $\eta_p$ is obtained when $p$ lies on the border of the cone (i.e.  $\eta_p$ is a null curve for $g_{-\alpha}$).  In this case, we have $\ell_{\xi}(\eta_p)=\sqrt{2-\alpha} \lambda \ge \frac{1}{1+\alpha} \frac{\sqrt{2-\alpha}}{\sqrt{2+\alpha}} \ell$ (this inequality is given by the first step), hence $\ell_h(\eta_p)\ge \frac{1-\alpha}{1+\alpha} \frac{\sqrt{2-\alpha}}{\sqrt{2+\alpha}} \ell$.  The minimum Euclidean length is obtained when $\eta_p$ is a straight line, in which case we have $\ell_{\xi}(\eta_p)=d_{\xi}(\gamma(0),\gamma(1))$, hence $\ell_h(\eta_p)\le (1+\alpha)d_{\xi}(\gamma(0),\gamma(1))$.  By using the integral expression of $\ell_h(\eta_p)$, one can see that it is a continuous function of $p$.  Therefore, the values of this map contains the interval
$J=\intff{(1+\alpha)d_{\xi}(\gamma(0),\gamma(1))}{\frac{1-\alpha}{1+\alpha} \frac{\sqrt{2-\alpha}}{\sqrt{2+\alpha}} \ell}$.  In order to conclude, we wish to see that $J\cap \intff{\ell(1-\e)}{\ell(1+\e)}\ne \emptyset$ (after that, we choose $p$ such that $\eta_p$ has length between $\ell(1-\e)$ and $\ell(1+\e)$ and we set $\eta=\eta_p$).  By using $\gamma(t)=\lim \gamma_k(t)$, we obtain $d_{\xi}(\gamma(0),\gamma(1))\le \frac{1}{1-\alpha}\ell \le (1+\alpha)\ell$, therefore $(1+\alpha)d_{\xi}(\gamma(0),\gamma(1))\le (1+\alpha)^2 \ell \le (1+\e)\ell$ (this is the first required inequality). We also chose $\alpha$ such that $\frac{1-\alpha}{1+\alpha} \frac{\sqrt{2-\alpha}}{\sqrt{2+\alpha}} \ell \ge
(1-\e)\ell$, which concludes to prove that $J\cap \intff{\ell(1-\e)}{\ell(1+\e)}\ne \emptyset$.\\

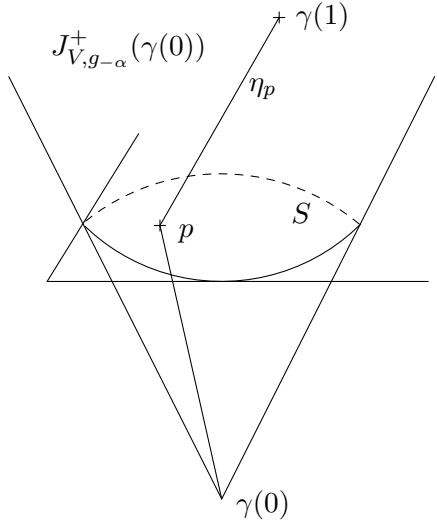
\begin{figure}[h]
\begin{tikzpicture}[line cap=round,line join=round,>=triangle 45,x=1.0cm,y=1.0cm,scale=1.4]
\clip(-2.7,-0.6) rectangle (16.9,4.7);
\draw (0,4)-- (2,0);
\draw (4,4)-- (2,0);
\draw (2.56,2.9) node[anchor=north west] {$S$};
\draw (2.6,4.8) node[anchor=north west] {$\gamma(1)$};
\draw (2.04,0.18) node[anchor=north west] {$\gamma(0)$};
\draw (0.3,4.54) node[anchor=north west] {$J^+_{V,g_{-\alpha}}(\gamma(0))$};
\draw (1.5,2.7) node[anchor=north west] {$p$};
\draw (1.42,2.59)-- (2.54,4.56);
\draw (1.42,2.59)-- (2,0);
\draw (2.16,4.08) node[anchor=north west] {$\eta_p$};
\draw (0.36,2.06)-- (3.94,2.06);
\draw (0.36,2.06)-- (1.22,3.46);
\draw [shift={(2,3.9)}] plot[domain=3.94:5.5,variable=\t]({1*1.84*cos(\t r)+0*1.84*sin(\t r)},{0*1.84*cos(\t r)+1*1.84*sin(\t r)});
\draw [shift={(1.99,1.06)},dash pattern=on 3pt off 3pt]  plot[domain=0.87:2.27,variable=\t]({1*2.02*cos(\t r)+0*2.02*sin(\t r)},{0*2.02*cos(\t r)+1*2.02*sin(\t r)});
\begin{scriptsize}
\draw [color=black] (2.54,4.56)-- ++(-1.5pt,0 pt) -- ++(3.0pt,0 pt) ++(-1.5pt,-1.5pt) -- ++(0 pt,3.0pt);
\draw [color=black] (1.42,2.59)-- ++(-1.5pt,0 pt) -- ++(3.0pt,0 pt) ++(-1.5pt,-1.5pt) -- ++(0 pt,3.0pt);
\end{scriptsize}
\end{tikzpicture}
\caption{Construction of the curve $\eta$} \label{fig:construction_curve}
\end{figure}

%Figure ancienne version
\begin{comment}
\begin{figure}[h]
\definecolor{uuuuuu}{rgb}{0.27,0.27,0.27}
\begin{tikzpicture}[line cap=round,line join=round,>=triangle 45,x=1.0cm,y=1.0cm,scale=1.4]
\clip(-2.7,-0.6) rectangle (16.9,4.7);
\draw (0,4)-- (2,0);
\draw (4,4)-- (2,0);
\draw (0.5,2)-- (3.5,2);
\draw (0.5,2)-- (1.2,3);

\draw (2.42,2.6) node[anchor=north west] {$S$};
\draw (2.6,4.8) node[anchor=north west] {$\gamma(1)$};
\draw (2.04,0.18) node[anchor=north west] {$\gamma(0)$};
\draw (0.46,4.4) node[anchor=north west] {$J^+_{V,g_{-\alpha}}(\gamma(0))$};
\draw [shift={(2.03,2.84)}] plot[domain=4.15:5.2,variable=\t]({1*1.52*cos(\t r)+0*1.52*sin(\t r)},{0*1.52*cos(\t r)+1*1.52*sin(\t r)});
\draw [shift={(1.93,-0.03)},dash pattern=on 3pt off 3pt]  plot[domain=1.08:1.99,variable=\t]({1*1.74*cos(\t r)+0*1.74*sin(\t r)},{0*1.74*cos(\t r)+1*1.74*sin(\t r)});
\draw (1.06,2.6) node[anchor=north west] {$p$};
\draw (1.42,2.59)-- (2.54,4.56);
\draw (1.42,2.59)-- (2,0);
\draw (2.02,3.58) node[anchor=north west] {$\eta_p$};
\begin{scriptsize}
\draw [color=black] (2.54,4.56)-- ++(-1.5pt,0 pt) -- ++(3.0pt,0 pt) ++(-1.5pt,-1.5pt) -- ++(0 pt,3.0pt);
\draw [color=black] (1.42,2.59)-- ++(-1.5pt,0 pt) -- ++(3.0pt,0 pt) ++(-1.5pt,-1.5pt) -- ++(0 pt,3.0pt);
\fill [color=uuuuuu] (2,0) circle (1.5pt);
%\draw[color=uuuuuu] (2.14,0.26) node {$L$};
\end{scriptsize}
\end{tikzpicture}
\caption{Construction of the curve $\eta$} \label{fig:construction_curve}
\end{figure}
\end{comment}

\textbf{Third step: } Second case: if $\gamma(1) \notin J^+_{V,g_{-\alpha}}(\gamma(0))$.\\

\indent In this case, we will simply show that the curve $\gamma$ already satisfies the desired properties.  We have $\ell_h(\gamma) \le \ell$ by upper semi continuity of the Riemannian length, which gives us the desired upper bound on $\ell_h(\gamma)$.  We also have $\ell_h(\gamma) \ge (1-\alpha) \ell_{\xi}(\gamma) \ge (1-\alpha) d_{\xi}(\gamma(0),\gamma(1))$ where $d_{\xi}$ is
the Euclidean distance in coordinates.  The rest of the proof is Euclidean geometry in coordinates.  Let us consider the vertical plane $P$ containing $\gamma(0)$ and $\gamma(1)$ and the horizontal hyperplane $S$ containing $\gamma(1)$.  Let $x$ (resp.  $y$) be intersection of $P$ and $\partial J^+_{V,g_{-\alpha}}(\gamma(0))$ (resp.  $\partial J^+_{V,g_{\alpha}}(\gamma(0))$) that is closest to $\gamma(1)$.  Let $z$ be the intersection of $S$ and the vertical line passing through $\gamma(0)$.  Since $\gamma(1) \notin J^+_{V,g_{-\alpha}}(\gamma(0))$ and $\gamma(1) \in J^+_{V,g_{\alpha}}(\gamma(0))$, we have $d_{\xi}(x,\gamma(1))\le d_{\xi}(x,y)$.\\ \indent Since $x$, $y$ and $z$ are on the same line, we have $d_{\xi}(x,y)=d_{\xi}(z,y)-d_{\xi}(z,x)$.  By using the Pythagorean Theorem, we obtain $d_{\xi}(z,x)=\sqrt{1-\alpha} \lambda$ and $d_{\xi}(z,y)=\sqrt{1-\alpha} \lambda$. We now have:

\begin{eqnarray*} d(\gamma(0),\gamma(1)) &\ge& d(\gamma(0),x) - d(x,\gamma(1)) \\
&\ge& \sqrt{2-\alpha} \lambda - d(x,y) \\
&\ge& (\sqrt{2-\alpha} - \sqrt{1-\alpha} + \sqrt{1-\alpha})\lambda \\
\end{eqnarray*}
\indent The first step and the third required inequality on $\alpha$ give us:

\begin{eqnarray*} \ell_h(\gamma) &\ge& (1-\alpha)(\sqrt{2-\alpha} - \sqrt{1-\alpha} + \sqrt{1-\alpha})\lambda \\
&\ge& \frac{1-\alpha}{1+\alpha} \frac{\sqrt{2-\alpha} - \sqrt{1-\alpha} + \sqrt{1-\alpha}}{\sqrt{2-\alpha}} \ell \\
&\ge& \ell(1-\e)
\end{eqnarray*}
\indent This gives the desired lower bound on $\ell_h(\gamma)$, which concludes the proof.

\end{proof}

\begin{proof}[Proof of Proposition \ref{long_curve}.]  Let $(\gamma_k)$ be a sequence of future directed curves converging to $\gamma$, and such that $\ell_h(\gamma_k) \rightarrow \ell$.  Let $\e >0$.  For $t\in \intff{0}{1}$, let us consider a neighbourhood $U_t$ given by Lemma \ref{long_curve_local}.  The open covering $\gamma(\intff{0}{1})\subset \bigcup_{t\in \intff{0}{1}} U_t$ admits a finite sub cover $\gamma(\intff{0}{1})\subset \bigcup_{1\le i \le m} U_{t_i}$ with $t_1 < t_2 < \dots < t_m$.  Let $0=s_0 < s_2 < \dots s_m=1$ such that $\gamma(\intff{s_i}{s_{i+1}})\subset U_{t_i}$ for all $i$.  Let us denote by $\gamma^i$ (resp. $\gamma_k^i$) the restriction of $\gamma$ (resp.  $\gamma_k$) to $\intff{s_i}{s_{i+1}}$.  For all $i$, there is a curve $\eta^i$ joining $\gamma(s_i)$ and $\gamma(s_{i+1})$ such that $ (s_{i+1} - s_i) \ell (1-\e) \le \ell_h(\eta^i) \le (s_{i+1} - s_i)
\ell (1+\e)$.  The concatenation $\eta$ of the curves $\eta^i$ joins $\gamma(0)$ and $\gamma(1)$ and satisfies $\ell (1-\e) \le \ell_h(\eta) \le \ell (1+\e)$.  \end{proof}

We will also use the following result that shows that, locally, causal curves cannot be arbitrarily long.

\begin{prop} \label{short_curves} Let $h$ be an adapted Riemannian metric to $g$ and let $x\in M$. For all $\e>0$, there is a neighborhood $U$ of $x$ such that for all causal curve $\gamma$ included in $U$, we have $\ell_h(\gamma)\le \e$. \end{prop}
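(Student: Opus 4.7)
The plan is to work in an adapted coordinate chart around $x$ and control the $h$-length of any causal curve by the coordinate extent of the chart. The argument mirrors the ``first step'' of the proof of Lemma \ref{long_curve_local}.

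First I would pick a coordinate chart around $x$ given by Definition \ref{adapted}, so that at $x$ itself, $g$ is (a multiple of) $-dx_1^2+dx_2^2+\cdots+dx_n^2$ and $h$ is $dx_1^2+\cdots+dx_n^2$. Fix a small $\alpha>0$, and let $\tilde{g}$ denote the constant Lorentzian metric $-(1+\alpha)dx_1^2+dx_2^2+\cdots+dx_n^2$ and $\xi$ the Euclidean metric in these coordinates. By continuity of $g$ and $h$, on a sufficiently small coordinate neighborhood $V$ of $x$ we have $g\prec \tilde g$ (every nonzero $g$-causal vector is $\tilde g$-timelike) and $h\le (1+\alpha)\xi$.

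Next, let $\gamma$ be any future directed causal curve contained in $V$. Since $\gamma$ is $\tilde g$-causal, its first coordinate is strictly increasing and $\gamma$ may be reparametrized by $x_1$, writing $\gamma(t)=(t,x_2(t),\dots,x_n(t))$ with $\dot x_2^2+\cdots+\dot x_n^2\le (1+\alpha)$. Then
\begin{equation*}
\ell_\xi(\gamma)=\int \sqrt{1+\dot x_2^2+\cdots+\dot x_n^2}\,dt\le \sqrt{2+\alpha}\,\bigl(f(\gamma(1))-f(\gamma(0))\bigr),
\end{equation*}
where $f$ is the first coordinate function, and therefore $\ell_h(\gamma)\le (1+\alpha)\sqrt{2+\alpha}\,\mathrm{diam}_{f}(V)$, where $\mathrm{diam}_f(V)$ is the oscillation of $f$ on $V$.

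Finally, take $U\subset V$ to be a coordinate ball of radius $r$ centered at $x$, chosen small enough that $(1+\alpha)\sqrt{2+\alpha}\,(2r)\le \e$. Any causal curve $\gamma\subset U$ satisfies $f(\gamma(1))-f(\gamma(0))\le 2r$, so $\ell_h(\gamma)\le \e$, which is the required bound. There is no essential obstacle here: the argument is local and purely a consequence of the adapted coordinates, the only point requiring care being that the inequalities $g\prec \tilde g$ and $h\le (1+\alpha)\xi$ both be arranged on the same neighborhood, which is automatic from continuity and the equality at $x$.
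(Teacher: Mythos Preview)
Your proof is correct and follows essentially the same approach as the paper's own proof: both work in adapted coordinates, compare $g$ with a slightly wider constant Minkowski metric and $h$ with the Euclidean metric, parametrize causal curves by the first coordinate, and bound $\ell_h(\gamma)$ by a constant times the oscillation of the first coordinate on the neighborhood. The only cosmetic difference is that the paper fixes the comparison parameter to $\alpha=1$ (so the constant is $2\sqrt{3}$) while you keep a general small $\alpha$, which makes no substantive difference.
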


\begin{proof} Once again, the idea is that it is simple in the Minkowski case, with an explicit neighborhood. Let $U$ be a coordinate neighborhood of $x$ given by the definition of an adapted metric. By reducing $U$, we can assume that $g<g_1$ and $h\le \xi_1$ on $U$ (we use the same notations as in the previous proposition), and that $-\frac{\e}{4\sqrt 3}\le x_1 \le \frac{\e}{4\sqrt 3}$ for all $(x_1,\dots,x_n)\in U$. Let $\gamma$ be a future directed causal curve in $U$, and let us write $\gamma(t)=(x_1(t),\dots,x_n(t))$. Since $\gamma$ is timelike for $g_1$, we can consider that $x_1(t)=t$.

\begin{eqnarray*} \ell_h(\gamma) &\le& 2\ell_{\xi}(\gamma) \\ &\le& 2 \int_{t_0}^{t_1} \sqrt{1+\dot{x}_2^2(t)+\cdots + \dot{x}_n^2(t)}
 dt \\ &\le& (1-\alpha) \int_{t_0}^{t_1} \sqrt{1+2}dt\\ &\le& 2\sqrt 3 (t_1-t_0) \\ &\le& \e
 \end{eqnarray*}
 
\end{proof}

With a hypothesis on causality, this tells us that causal curves that stay in a compact set have bounded length.

\begin{coro} \label{bounded_length} If $h$ is an adapted metric to $g$ and if $(M,g)$ is strongly causal, then for any compact set $K\subset M$, there is a constant $\ell >0$ such that for all causal curve $\gamma$ included in $K$, we have $\ell_h(\gamma)\le \ell$.
\end{coro}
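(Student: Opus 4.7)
The plan is to combine Proposition \ref{short_curves} with strong causality to produce a finite covering of $K$ by neighborhoods that simultaneously bound individual curve segments in length and prevent a causal curve from re-entering them.

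First, I would fix some $\e>0$ (say $\e=1$) and, for every point $x\in K$, use strong causality to select a causally convex neighborhood $W_x$ of $x$, and then intersect it with the neighborhood $U_x$ produced by Proposition \ref{short_curves}. After shrinking if necessary (a neighborhood basis of causally convex sets exists by definition of strong causality), I would obtain an open $V_x\ni x$ that is causally convex and such that every future directed causal curve contained in $V_x$ has $h$-length at most $\e$. Compactness of $K$ then yields a finite subcover $V_{x_1},\dots,V_{x_N}$ of $K$.

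Next, I would take an arbitrary future directed causal curve $\gamma:[0,1]\to K$ and analyze how it meets this cover. For each $i$, the preimage $\gamma^{-1}(V_{x_i})$ is open, and by causal convexity of $V_{x_i}$ together with the fact that $\gamma$ is a causal curve, its intersection with $V_{x_i}$ is connected. Hence $\gamma^{-1}(V_{x_i})$ is a (possibly empty) single interval $I_i\subset[0,1]$, and the restriction $\gamma|_{I_i}$ lies inside $V_{x_i}$, so $\ell_h(\gamma|_{I_i})\le\e$. Since the $I_i$ cover $[0,1]$, a standard measure-theoretic estimate
\begin{equation*}
\ell_h(\gamma)=\int_0^1 |\dot\gamma|_h\,dt\le\sum_{i=1}^N\int_{I_i}|\dot\gamma|_h\,dt\le N\e
\end{equation*}
gives the desired uniform bound $\ell:=N\e$, independent of $\gamma$.

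The only delicate point I foresee is verifying that the neighborhoods produced by strong causality and by Proposition \ref{short_curves} can indeed be simultaneously arranged: this is not an obstacle because strong causality provides a basis of causally convex neighborhoods at each point, so one simply picks a causally convex neighborhood contained in the neighborhood supplied by Proposition \ref{short_curves}. The rest of the argument is a clean compactness and additivity computation, and the causal convexity is used precisely to prevent the total count from depending on $\gamma$: without it, $\gamma$ could oscillate through a single neighborhood many times and the bound $N\e$ would fail.
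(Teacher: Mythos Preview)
Your proof is correct and essentially identical to the paper's: both use Proposition \ref{short_curves} with $\e=1$, shrink the resulting neighborhoods to be causally convex via strong causality, extract a finite subcover of $K$ by $N$ such sets, and then use causal convexity to conclude that any causal curve in $K$ meets each set in a connected piece of length at most $1$, giving $\ell_h(\gamma)\le N$. The only cosmetic difference is that you phrase the final estimate via the integral over overlapping intervals $I_i$, while the paper partitions $\gamma$ into $k\le N$ subcurves; the content is the same.
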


\begin{proof} For all $x\in K$ we consider a neighborhood $U_x$ of $x$ given by Proposition \ref{short_curves} with $\e=1$. Since $(M,g)$ is strongly causal, by reducing $U_x$ we can assume that $U_x$ is causally convex. From the open covering $K\subset \bigcup_{x\in K}U_x$ we can extract a finite cover $K\subset \bigcup_{i=1}^n U_{x_i}$ where $x_1,\dots,x_n \in K$. Let $\ell =n$. If $\gamma$ is a causal curve included in $K$, we can divide $\gamma$ in a finite number $k$ of curves $\gamma_i$ such each of these curves is included in one $U_{x_j}$. Since they are causally convex, we have $k\le n$, and their length is at most $1$ by definition of $U_{x_j}$, therefore $\ell_h(\gamma) = \sum_{i=1}^k \ell_h(\gamma_i) \le k \le \ell$.

\end{proof}

%------------------------------------------------------------------------------------------------------------------------------
%------------------------------------------------------------------------------------------------------------------------------
%------------------------------------Hausdroff-continuity----------------------------------------------------------------------
%------------------------------------------------------------------------------------------------------------------------------
%------------------------------------------------------------------------------------------------------------------------------

\section{Hausdorff topology and continuity of $\overline{J^+_{t,T}}$}
\label{sec:continuity_future}
Let us recall that if $K_1$ and $K_2$ are two non empty compact subsets of a metric space $X$, then the Hausdorff distance between $K_1$ and $K_2$ is given by $d_H(K_1,K_2)=\inf \{ \e >0 \vert K_1\subset V_{\e}(K_2)$ and $K_2\subset V_{\e}(K_1)\}$, where $V_\e(K)$ denotes the $\e$-neighborhood of $K$.  It defines a metric on the space $\mathrm{Comp}(X)$ of non empty compact subsets of $X$.  The topology inherent to this metric, called the Hausdorff topology, does not depend on the choice of a metric on $X$, as long as it defines
the same topology.

The following result will be important later on.

\begin{prop} Let $X$ be a proper metric space (i.e.  closed balls are compact).  Let us consider a continuous map $f:Y\rightarrow \R$.  Then the map $g:\mathrm{Comp}(X) \rightarrow \R$ defined by $g(K)=\max \{ f(x) \vert x\in K \}$ is continuous.
\end{prop}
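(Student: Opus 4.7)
The plan is to verify sequential continuity, since the Hausdorff topology on $\mathrm{Comp}(X)$ is metrized by $d_H$. I would fix a sequence $K_n \to K$ in $\mathrm{Comp}(X)$ and aim for $g(K_n) \to g(K)$; compactness of each $K$ and $K_n$ guarantees the maxima defining $g$ are attained, so the problem reduces to comparing them.

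The first step I would carry out is to absorb all but finitely many $K_n$, together with $K$, into a common compact set $L \subset X$. Hausdorff convergence gives $K_n \subset V_1(K)$ eventually, and $V_1(K)$ is bounded because $K$ is compact; properness of $X$ then makes $L = \overline{V_1(K)}$ compact, and it contains $K$ and $K_n$ for all $n$ large enough.

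Next I would apply uniform continuity of $f\vert_L$: given $\e>0$, pick $\delta\in\intoo{0}{1}$ such that points of $L$ at distance less than $\delta$ have $f$-values within $\e$. For $n$ large, $d_H(K_n,K)<\delta$. The finale is then a symmetric two-sided comparison: taking $x_n\in K_n$ achieving $g(K_n)=f(x_n)$, the Hausdorff bound produces $x\in K$ with $d(x,x_n)<\delta$, giving $g(K)\ge f(x)\ge g(K_n)-\e$; swapping the roles of $K$ and $K_n$ yields the reverse inequality, hence $|g(K_n)-g(K)|\le \e$.

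The main conceptual hurdle is really the very first step: without properness one cannot confine the $K_n$ inside a common compact set, and the modulus of continuity of $f$ could deteriorate as $n$ grows, breaking the argument. Everything after that is routine bookkeeping of the Hausdorff distance.
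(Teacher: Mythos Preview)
Your argument is correct. The overall strategy---confine the $K_n$ inside a single compact set using properness, then exploit continuity of $f$ there---matches the paper's, but the organisation differs. The paper treats the two inequalities separately: lower semicontinuity of $g$ is proved directly from pointwise continuity of $f$ at the maximiser (no properness needed for that half), while upper semicontinuity is proved by contradiction, extracting a convergent subsequence of maximisers inside the compact envelope. Your version is tidier: by invoking \emph{uniform} continuity of $f$ on the compact envelope $L=\overline{V_1(K)}$, you obtain a single $\delta$ that controls both directions symmetrically, avoiding the contradiction argument and the subsequence extraction. The trade-off is that your lower bound also relies on properness, whereas the paper's does not; for the purposes of this proposition that makes no difference.
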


\begin{proof} We will separate the proofs of upper and lower semi continuity.  Let $K_0\in \mathrm{Comp}(X)$ and $\e >0$.  Let $x_0\in K_0$ such that $f(x_0)=g(K_0)$.  Let $\delta >0$ such that $d(x,x_0)<\delta$ implies $f(x)\ge f(x_0) - \e$.  Let us denote by $\varphi$ the map $K\mapsto d(x_0,K)$.  It is easy to check that $\varphi$ is $1$-Lipschitz for the Hausdorff metric and therefore continuous.  Since $\varphi(K_0)=0$, let $W$ be a neighborhood of $K_0$ in $\mathrm{Comp}(X)$ such that $\varphi(K)< \delta$ for $K\in W$.  For $K\in W$, we have $d(x_0,K)< \delta$ and therefore $B(x_0,\delta)\cap K \ne \emptyset$.  Let $x\in B(x_0,\delta)\cap K$, we have $f(x) \ge f(x_0) - \e$ and $g(K)\ge f(x)$, hence $g(K)\ge g(K_0) - \e$.  This concludes the proof of lower semi continuity.\\
\indent Let us now prove by contradiction that $g$ is upper semi continuous.  If it is not, then we can find $K \in \mathrm{Comp}(X)$, $\e>0$ and a sequence $K_n$ in $\mathrm{Comp}(X)$ such that $\lim K_n = K$ and $g(K_n) \ge g(K) + \e$ for all $n$.  Let $C=\{ x\in X / d(x,K)\le 1\}$.  Since $X$ is a proper metric space, $C$ is compact because it is closed and $\mathrm{diam}(C) \le \mathrm{diam}(K)+2 < \infty$.  For $n$ large enough, we have $K_n\subset V_1(K) \subset C$.  Let $x_n\in K_n$ such that
$f(x_n)=g(K_n)$.  Since $x_n\in C$, up to the choice of a subsequence, we can assume that $x_n$ tends to $x\in C$.  For
any $\eta >0$, we have $x_n \in K_n \subset V_{\eta}(K)$ for $n$ sufficiently large, and therefore $d(x_n,K)\le \eta$.
This shows that $\lim d(x_n,K)=0$, therefore $x\in K$.  We now have $g(K)\ge f(x) = \lim f(x_n) = \lim g(K_n) \ge g(K) +
\e$ which is impossible.  Therefore $g$ is lower semi continuous.  \end{proof}

Since the composition of continuous functions is continuous, the following is now obvious.

\begin{coro} Let $X$ be a topological space and $Y$ a proper metric space.  Let us consider continuous maps $F:X
\rightarrow \mathrm{Comp}(Y)$ and $f:  Y\to \R$.  Then the map $g :  X \rightarrow \R$ given by $g(x) =\max \{ f(y) /
y\in F(x) \}$ is continuous.  \end{coro}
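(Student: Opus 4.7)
The plan is to observe that this corollary is essentially a one-line consequence of the preceding proposition plus the fact that compositions of continuous maps are continuous. Concretely, I would introduce the auxiliary map $G:\mathrm{Comp}(Y)\to\R$ defined by $G(K)=\max\{f(y)\mid y\in K\}$, which is well defined since each $K\in\mathrm{Comp}(Y)$ is a nonempty compact subset of $Y$ on which the continuous function $f$ attains its maximum. The preceding proposition, applied with the proper metric space $Y$ and the continuous function $f:Y\to\R$, says exactly that $G$ is continuous for the Hausdorff topology on $\mathrm{Comp}(Y)$.

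Then I would simply rewrite $g=G\circ F$. Since $F:X\to \mathrm{Comp}(Y)$ is continuous by hypothesis and $G:\mathrm{Comp}(Y)\to\R$ is continuous by the previous step, the composite is continuous, giving the result.

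There is really no obstacle here: all the work — both the upper and lower semi continuity of the "max" functional on $\mathrm{Comp}(Y)$, which crucially use that $Y$ is proper so that $\varepsilon$-neighborhoods of compact sets are relatively compact — has already been done in the previous proposition. The only thing worth checking is that $G$ is indeed well defined, i.e.\ that the domain of the previous proposition matches our setting (nonempty compact subsets of $Y$, with $Y$ proper), and that $F(x)$ is nonempty so the maximum exists — both of which follow from the conventions already set up in the preceding paragraph.
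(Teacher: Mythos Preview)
Your proposal is correct and matches the paper's approach exactly: the paper simply notes that since the composition of continuous functions is continuous, the corollary is obvious from the preceding proposition. Your write-up just makes explicit the auxiliary map $G$ and the factorisation $g=G\circ F$, which is precisely the intended argument.
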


We now wish to prove that, in some sense, the map that associates to a point its future is continuous.  If we try to deal with the whole future, then we face a major problem:  it is not generally continuous, and its continuity is actually related to causality conditions (see \cite{minguzzi_sanchez}).  This is why we consider the map  $x\mapsto \overline{J^+_{t,T}(x)}$. To prove its continuity, we will need some results that rely on the fact that we consider Riemannian metrics adapted to $g$.

\begin{lemme} Consider $0<t<T$, a sequence $(x_k)_{k\in \N} \in M^{\N}$ converging to $x\in M$, and a sequence
$(y_k)_{k\in \N} \in M^{\N}$ converging to $y\in M$, such that $y_k\in J_{t,T}^+(x_k)$ for all $k\in \N$.  Then $y\in
\overline{J_{t,T}^+(x)}$.  \end{lemme}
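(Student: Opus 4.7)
The plan is to extract a limit curve from the family $\gamma_k$ witnessing $y_k\in J^+_{t,T}(x_k)$, then use Proposition \ref{long_curve} to manufacture from this limit a curve from $x$ to $y$ with length close to the limiting length, and finally handle the possibility that this limiting length lies on the boundary of $[t,T]$ by extending or truncating.

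First I would reparametrize each $\gamma_k$ to have constant $h$-speed $\ell_h(\gamma_k)$ on $[0,1]$. The uniform bound $\ell_h(\gamma_k)<T$ makes $(\gamma_k)$ equi-Lipschitz for $h$, and completeness of the adapted metric $h$ ensures that all the images lie in the closed $h$-ball of radius $2T$ around $x$ for $k$ large, which is compact. Arzelà--Ascoli then yields a subsequence converging uniformly to a continuous curve $\gamma:[0,1]\to M$ with $\gamma(0)=x$ and $\gamma(1)=y$; the limit-curve proposition recalled from \cite{Ba05} guarantees that $\gamma$ is future directed. Extracting once more, I may assume $\ell_h(\gamma_k)\to \ell$ for some $\ell\in[t,T]$ (note $\ell\geq t>0$).

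Next, since the hypotheses of Proposition \ref{long_curve} are satisfied, for every $\e>0$ there is a future directed causal curve $\eta_\e$ from $x$ to $y$ with $\ell(1-\e)\le \ell_h(\eta_\e)\le \ell(1+\e)$. When $t<\ell<T$, choosing $\e$ small enough that $\ell(1-\e)>t$ and $\ell(1+\e)<T$ immediately gives $y\in J^+_{t,T}(x)\subset \overline{J^+_{t,T}(x)}$.

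The remaining task is the boundary situation $\ell=t$ or $\ell=T$; this is where the main obstacle lies, because the strict inequalities defining $J^+_{t,T}$ degenerate in the limit and $\eta_\e$ may fail to land in $(t,T)$. When $\ell=t$, I would prolong $\eta_\e$ beyond $y$ by following an integral curve of the timelike vector field giving the time orientation for a short time, producing a future directed causal curve of length just above $t$ (say in $(t,t+3\e t)\subset(t,T)$ for $\e$ small), whose endpoint $z_\e$ tends to $y$ as the extension time shrinks; these $z_\e$ lie in $J^+_{t,T}(x)$ and witness $y\in\overline{J^+_{t,T}(x)}$. When $\ell=T$, I would instead truncate $\eta_\e$ at a parameter $s<1$ so that $\eta_\e|_{[0,s]}$ has length equal to, say, $T-\e$; since the speed is constant and bounded by $T(1+\e)$, choosing $s$ close to $1$ keeps the endpoint arbitrarily close to $y$ while landing the length strictly inside $(t,T)$. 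In both boundary cases we produce a sequence of points of $J^+_{t,T}(x)$ converging to $y$, which concludes the argument.
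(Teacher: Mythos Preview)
Your argument is correct and follows essentially the same route as the paper: extract a uniform limit of the constant-speed curves $\gamma_k$, apply Proposition \ref{long_curve} to produce $\eta_\e$ from $x$ to $y$ with length near $\ell$, and then shorten or extend to land the endpoint in $J^+_{t,T}(x)$. The paper compresses your three-case analysis into a single sentence (``shorten or extend $\eta_\e$ to a future curve of length $\ell$''), while you are more careful about the boundary cases $\ell=t$ and $\ell=T$; one small slip is that Proposition \ref{long_curve} does not promise $\eta_\e$ has constant speed, but a reparametrization fixes this immediately.
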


\begin{proof} For $k\in \N$, consider a future curve $\gamma_k :  [0,1] \rightarrow M$ parametrized by arc length, of length $\ell_k$ between $t$ et $T$, such that $\gamma_k(0)=x_k$ and $\gamma_k(1) = y_k$.  Up to the choice of a sub sequence, we can assume that $(\gamma_k)$ converges uniformly to a curve $\gamma$, and that $\ell_k$ converges to $\ell \in \intff{t}{T}$.  We have $\gamma(0) = x$ et $\gamma(1) = y$.  \\ 
\indent Let $\e>0$.  By Proposition \ref{long_curve}, there is a future curve $\eta_{\e}$ of length $\tilde{\ell}\in \intff{\ell- \e}{\ell +\e}$ joining $x$ and $y$.  We can either shorten or extend $\eta_{\e}$ to a future curve of length $\ell$ with endpoint $z_{\e}$ satisfying $z_{\e}\in J_{t,T}^+(x)$ and $d(y,z_{\e})\le \e$, therefore $y\in V_{\e}(J_{t,T}^+(x))$ for all $\e >0$ and $y\in \overline{J_{t,T}^+(x)}$.

\end{proof}

\begin{coro} \label{outer}For all $x\in M$, $t>0$, $T>t$ and $\e>0$, there is a neighborhood $V$ of $x$ such that for all $y\in V$, $J_{t,T}^+(y)$ lies in the $\e$-neighborhood of $J_{t,T}^+(x)$.  \end{coro}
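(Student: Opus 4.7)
My plan is to argue by contradiction, using the preceding lemma together with the completeness of the adapted Riemannian metric $h$.

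Suppose the conclusion fails for some $x\in M$, some $0<t<T$ and some $\e>0$. Then I can find a sequence $y_k\to x$ in $M$ and points $z_k\in J^+_{t,T}(y_k)$ with $d(z_k, J^+_{t,T}(x))>\e$ for every $k$. My first step is to show that the sequence $(z_k)$ is relatively compact. By definition of $J^+_{t,T}$, each $z_k$ is the endpoint of a future directed causal curve from $y_k$ of $h$-length at most $T$, so $d(y_k,z_k)\le T$. Since $y_k\to x$, for $k$ large all the $z_k$ lie in the closed ball $\overline{B}(x,T+1)$, which is compact because $h$ is complete (Hopf--Rinow).

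Thus, after passing to a subsequence, I may assume $z_k\to z$ for some $z\in M$. The preceding lemma, applied to the sequences $(y_k)$ converging to $x$ and $(z_k)$ converging to $z$ with $z_k\in J^+_{t,T}(y_k)$, yields $z\in\overline{J^+_{t,T}(x)}$. On the other hand, the assumption $d(z_k, J^+_{t,T}(x))>\e$ passes to the limit to give $d(z, J^+_{t,T}(x))\ge\e$, and since the distance from a point to a set equals the distance to its closure, this contradicts $z\in\overline{J^+_{t,T}(x)}$.

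The main point — which is essentially the only thing beyond a routine compactness-contradiction — is the relative compactness of $(z_k)$, and that is precisely where the completeness of the adapted Riemannian metric, built in Proposition \ref{adapted_exists}, enters. Once that is in place, the lemma takes care of the rest.
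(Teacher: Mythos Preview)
Your proof is correct and follows essentially the same route as the paper: a contradiction argument in which the endpoints are trapped in a closed ball $\overline{B}(x,T+1)$ (compact by Hopf--Rinow, since $h$ is complete), a subsequence is extracted, and the preceding lemma is invoked to place the limit in $\overline{J^+_{t,T}(x)}$, contradicting the distance bound. The only cosmetic difference is your explicit remark that $d(z,S)=d(z,\overline{S})$, which the paper leaves implicit.
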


\begin{proof} Let us assume that this statement is false, so that there exists $x\in M$, $t>0$, $T>t$, $\e>0$, a sequence $(x_k)$ converging to $x$ and a sequence $(y_k)$ such that $y_k\in J^+_{t,T}(x_k)$ and $d(y_k , J_{t,T}^+(x)) \ge \e$. Since $y_k \in \overline{B(x_k, T)} \subset \overline{B(x, T+1)}$ for $k$ sufficiently large, we can assume up to extraction that $(y_k)$ converges to $y\in M$ (let us recall that since the metric $h$ is complete, by the Hopf-Rinow Theorem, closed balls are compact).  The previous result states that $y\in \overline{J_{t,T}^+(x)}$, but $d(y,J_{t,T}^+(x))\ge
\e$, which is absurd.

\end{proof}

\begin{lemme} \label{inner} For all $x\in M$, $t>0$, $T>t$ et $\e>0$, there is a neighborhood $V$ of $x$ such that for
all $y\in V$, $J_{t,T}^+(x) \subset V_{\e}(J_{t,T}^+(y))$.  \end{lemme}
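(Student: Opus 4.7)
The plan is to argue by contradiction. If the conclusion fails, we get $\e>0$, a sequence $y_k\to x$ and points $z_k\in J^+_{t,T}(x)$ with $d(z_k,J^+_{t,T}(y_k))>\e$. Fixing constant-speed future causal curves $\sigma_k\colon\intff{0}{1}\to M$ from $x$ to $z_k$ with $\ell_h(\sigma_k)=\ell_k\in\intoo{t}{T}$, completeness of $h$ (so that $\overline{B(x,T)}$ is compact) and compactness of causal curves let me extract a subsequence with $z_k\to z$, $\ell_k\to\ell\in\intff{t}{T}$, and $\sigma_k$ converging uniformly to a future causal curve $\sigma$ from $x$ to $z$. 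Proposition~\ref{long_curve} applied to $(\sigma_k)$ then allows me to replace $\sigma$ by a future causal curve from $x$ to $z$ of length arbitrarily close to $\ell$, hence to assume $\ell\in\intoo{t}{T}$ strictly, so that $d(x,z)\le\ell<T$.

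The crux of the proof is to produce, for $k$ large, a point of $J^+_{t,T}(y_k)$ within distance $<\e$ of $z_k$. The key idea is to bypass the possibility that $z$ lies on the null boundary $\partial J^+(x)$ by choosing $z^*\in I^+(x)$ with $d(z^*,z)<\e/3$ and $d(x,z^*)<T$; this is possible because $J^+(x)\subset\overline{I^+(x)}$ and $d(x,z)<T$. I then fix a future \emph{timelike} curve $\sigma^*\colon\intff{0}{1}\to M$ from $x$ to $z^*$ of $h$-length $L_0\in\intoo{t}{T}$. Such a curve exists since $z^*\in I^+(x)$ guarantees timelike curves from $x$ to $z^*$, and by introducing small detours inside the open set $I^+(x)\cap I^-(z^*)$ their $h$-lengths fill an interval containing $d(x,z^*)$ and extending above $T$.

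The construction is a cut-off coordinate translation. In an adapted Minkowski-like chart $U$ around $x$ (Definition~\ref{adapted}), pick $s_0\in\intoo{0}{1}$ with $\sigma^*(\intff{0}{s_0})\subset U$ and a smooth $\chi\colon\intff{0}{1}\to\intff{0}{1}$ with $\chi(0)=1$ and $\chi\equiv 0$ on $\intff{s_0}{1}$. For $k$ large enough that $y_k\in U$, define, in the coordinates of $U$,
\[\gamma_k(s)=\begin{cases}\sigma^*(s)+\chi(s)(y_k-x), & s\in\intff{0}{s_0},\\ \sigma^*(s), & s\in\intff{s_0}{1}.\end{cases}\]
Then $\gamma_k(0)=y_k$, $\gamma_k(1)=z^*$, and on $\intff{0}{s_0}$ one has $\dot\gamma_k(s)=\dot\sigma^*(s)+\chi'(s)(y_k-x)$, a perturbation of size $O(\|y_k-x\|)$ of the strictly $g$-timelike vector $\dot\sigma^*(s)$. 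Continuous dependence of the $g$-cone on the base point and strict interiority of $\dot\sigma^*(s)$ in that cone force the perturbed tangent to remain in the open $g$-cone at $\gamma_k(s)$ for $k$ large, so $\gamma_k$ is future $g$-timelike; moreover $\ell_h(\gamma_k)\to L_0\in\intoo{t}{T}$, and hence $\ell_h(\gamma_k)\in\intoo{t}{T}$ and $z^*\in J^+_{t,T}(y_k)$ for $k$ large. This yields $d(z_k,J^+_{t,T}(y_k))\le d(z_k,z)+d(z,z^*)<\e$ for $k$ large, the desired contradiction.

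The main obstacle is confirming that $\gamma_k$ is $g$-causal. Translating $\sigma$ itself rather than a strictly timelike approximation is dangerous precisely when $\sigma$ is a null curve (as happens when $z\in\partial J^+(x)$), since then a small perturbation of $\dot\sigma$ can exit the cone. The passage through $z^*\in I^+(x)$ and the strict timelikeness of $\sigma^*$ are what furnish a positive margin; the Minkowski-like coordinates of the adapted metric are what make the coordinate-translation $\sigma^*(s)+\chi(s)(y_k-x)$ meaningful and give uniform $C^0$-control over how far the $g$-cone can tilt on the initial segment $\intff{0}{s_0}$, so that the margin absorbs the perturbation.
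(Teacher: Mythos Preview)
Your argument is sound in its main construction and takes a genuinely different route from the paper's. Both begin by contradiction, extract a limit point $z=\lim z_k$ with a causal curve from $x$ to $z$ of length in $\intoo{t}{T}$ via Proposition~\ref{long_curve}, and then need to produce causal curves from the nearby base points $y_k$ ending near $z$. The paper does this by extending $\dot\eta$ to a time-dependent locally Lipschitz vector field $\nu_s$ on all of $M$ that is everywhere causal and of constant $h$-norm, and then flowing the points $y_k$ along $\nu_s$: continuity of the time-$1$ map sends $y_k$ to endpoints converging to $z$, lying in $J^+_{t-\e/4,T+\e/4}(y_k)$, which after a short length adjustment lands inside $V_{\e/2}(J^+_{t,T}(y_k))$. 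Your cut-off coordinate translation is more explicit and entirely local; the price is the extra detour through a strictly timelike $\sigma^*$ to a nearby endpoint $z^*$, whereas the paper's flow handles null $\eta$ directly since $\nu_s$ is globally causal by construction.

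One step in your argument needs tightening. You justify the existence of a timelike $\sigma^*$ with $\ell_h(\sigma^*)\in\intoo{t}{T}$ by asserting that the $h$-lengths of timelike curves from $x$ to $z^*$ ``fill an interval containing $d(x,z^*)$''. That is not true in general: the $h$-geodesic from $x$ to $z^*$ need not be causal, so the infimum of such lengths can be strictly larger than $d(x,z^*)$, and with your choice of $z^*$ (merely in $I^+(x)$, close to $z$) nothing prevents that infimum from being $\ge T$. The clean fix is to choose $z^*\in I^+(z)$ (still in $I^+(x)$ by the push-up property, and arbitrarily close to $z$): then concatenating your causal curve $x\to z$ of length $L\in\intoo{t}{T}$ with a short timelike segment $z\to z^*$ gives a causal curve from $x$ to $z^*$ of length in $\intoo{t}{T}$, and a standard smooth timelike approximation of that curve furnishes $\sigma^*$ with $\ell_h(\sigma^*)$ as close to $L$ as you like. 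With this adjustment your coordinate-translation argument goes through exactly as written.
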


\begin{proof} Let us once again prove this result by contradiction (it allows us to consider one point of $J_{t,T}^+(x)$
instead of the whole set).  Let us assume that there is a sequence $(x_k)$ converging to $x$, $\e>0$ and a sequence $(y_k)$
such that $y_k \in J_{t,T}^+(x)$ and $y_k\notin V_\e(J_{t,T}^+(x_k))$ for all $k$.  Let $(\gamma_k)$ be a sequence of future curves
with length between $t$ and $T$ such that $\gamma_k(0)=x$ and $\gamma_k(1)=y_k$.  Up to extraction, we can assume that
$(\gamma_k)$ converges to a future curve $\gamma$ and that $(\ell_h(\gamma_k))$ converges to $\ell \in [t,T]$.  Let $y$ be
the limit of $y_k$.  By proposition \ref{long_curve} there is a future curve parametrized by arc length $\eta$ joining $x$
and $y$ with length between $t-\frac{\e}{4}$ and $T+\frac{\e}{4}$.  Let $\nu_t$ be a time dependent locally Lipschitz
everywhere causal vector field with constant norm such that $\nu_t(\eta(t))=\dot{\eta}(t)$.  Let $\varphi_t$ denote the
isotopy of $\nu_t$.  The map $\varphi_1$ is continuous, therefore $\varphi_1(x_k)$ converges to $\varphi_1(x)=y$.  Therefore
$y\in V_{\frac{\e}{4}}(J^+_{t-\frac{\e}{4},T+\frac{\e}{4}}(x_k)) \subset V_{\frac{\e}{2}}(J_{t,T}^+(x_k))$ for $k$ large
enough, and $y_k\in V_{\e}(J^+_{t,T}(x_k))$, which is absurd.

\end{proof}

By combining Corollary \ref{outer} and Lemma \ref{inner}, we obtain the following result.

\begin{theo} \label{continuity_adapted} Let $h$ be an adapted Riemannian metric and let $0<t<T$.  The map $x\mapsto \overline{J_{t,T}^+}(x)$ is
continuous with respect to the Hausdorff topology.\end{theo}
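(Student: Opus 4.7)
The plan is to combine the two preceding results, Corollary \ref{outer} and Lemma \ref{inner}, which are precisely the two halves of Hausdorff continuity, and then pass carefully from inclusions between the sets $J^+_{t,T}(\cdot)$ to inclusions between their closures.

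Fix $x\in M$ and $\e>0$. Before assembling the two ingredients, I would first check that $\overline{J^+_{t,T}(x)}$ really is compact, so that the statement makes sense in $\mathrm{Comp}(M)$: since $h$ is adapted, it is complete, so by the Hopf--Rinow theorem the closed ball $\overline{B_h(x,T)}$ is compact; because any point of $J^+_{t,T}(x)$ is the endpoint of a future curve starting at $x$ of $h$-length at most $T$, it lies in this ball, and hence so does its closure.

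Next, I would apply Corollary \ref{outer} with parameter $\e/2$ to get a neighborhood $V_1$ of $x$ such that $J^+_{t,T}(y)\subset V_{\e/2}(J^+_{t,T}(x))$ for all $y\in V_1$, and apply Lemma \ref{inner} with the same parameter to get $V_2$ with $J^+_{t,T}(x)\subset V_{\e/2}(J^+_{t,T}(y))$ for all $y\in V_2$. For $y\in V:=V_1\cap V_2$, taking closures in the first inclusion yields
\[
\overline{J^+_{t,T}(y)}\subset \overline{V_{\e/2}(J^+_{t,T}(x))}\subset \bigl\{z\in M : d(z,\overline{J^+_{t,T}(x)})\le \e/2\bigr\}\subset V_{\e}(\overline{J^+_{t,T}(x)}),
\]
and the symmetric argument gives $\overline{J^+_{t,T}(x)}\subset V_{\e}(\overline{J^+_{t,T}(y)})$. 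By the definition of the Hausdorff distance, this means $d_H(\overline{J^+_{t,T}(x)},\overline{J^+_{t,T}(y)})\le \e$, which is exactly continuity at $x$.

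I do not expect a serious obstacle here: all the hard work has been absorbed into the previous two statements, which in turn rely on the adaptedness of $h$ via Proposition \ref{long_curve}. The only subtlety to keep in mind is that one cannot blithely replace $V_{\e/2}(K)$ by $V_{\e/2}(\overline{K})$ under closure, so a small amount of slack in the $\e$'s (hence the choice of $\e/2$ in each lemma) is required to absorb the difference between the closure of an open neighborhood and a slightly larger open neighborhood of the closure.
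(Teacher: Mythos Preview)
Your proposal is correct and follows exactly the approach the paper indicates: the paper simply states that the theorem is obtained ``by combining Corollary \ref{outer} and Lemma \ref{inner}'', and you have carefully supplied the details of that combination, including the compactness check and the $\e/2$ slack needed to pass to closures.
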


Theorem \ref{continuity_adapted} along with the existence of adapted metrics (Proposition \ref{adapted_exists}) proves Theorem \ref{theo:continuity_future}.

\section{Attractors in spacetimes}
\label{sec:attractors}

\subsection{Pre attractors, attractors and basin of attraction}

\indent Let us recall the definition of attractors.\\

\noindent \textbf{Definition \ref{defi_attractor}} \emph{An open set $U\subset M$ is said to be a pre attractor if there is  $t_0>0$ such that $\overline{J^+_{t_0}(U)} \subset U$.\\ The set $A=\bigcap_{t\ge t_0}\overline{J^+_t(U)}$ is called the attractor.\\ The set $B(A,U)=\bigcup_{t\ge 0} \{p\in M \vert \overline{J^+_t(p)}\subset U\}$ is called the basin of $U$-attraction.\\ If $\mathcal U$ is the set of pre attractors sharing the same attractor $A$, then the basin of attraction is  $B(A)=\bigcup_{U\in\mathcal U}B(A,U)$.}\\

%\indent Note that the distinction between $B(A,U)$ and $B(A)$ has to be made: in the Minkowski space $\R^{1,n-1}$, the empty set is an attractors, with pre attractor $I^+(x)$ for any point $x\in \R^{1,n-1}$. In this case, we see that $B(\emptyset, I^+(x))=I^+(x)$, but $B(\emptyset)=\R^{1,n-1}$.\\
\indent Note that the attractor $A$ may be empty. The central result of this section is Theorem \ref{mainthm}, which is the equivalent in Lorentzian geometry of Conley's Theorem for flows. Let us recall its statement.\\

\noindent \textbf{Theorem \ref{mainthm}.} \emph{If $A$ is  an attractor in a spacetime $(M,g)$ (for an adapted Riemannian metric), then there is a $B(A)\setminus A$-time function.}\\

\indent The proof consists mainly in observing that attractors attract long curves that start in the basin of attraction. We start by working with $B(A,U)\setminus A$ for a given pre attractor $U$, then extend to $B(A)$. When  considering long curves, the choice of the Riemannian metric becomes important, which is why we will only consider adapted Riemannian metrics. \\
\indent Let us start with a lemma that justifies the name attractor.

\begin{lemme} \label{attractor} Let $U$ be a pre attractor and $A$ its attractor.  Let $(\gamma_i)_{i\in \N}$ be a sequence of future directed causal curves with length tending to $+\infty$ and such that $\gamma_i(0)\in U$.  Then for any neighborhood $V$ of $A$ and for any compact set $K$, if we have $z_i=\gamma_i(1)\in K$ for all $i$, then $z_i\in V$ for all $i$ sufficiently large.  \end{lemme}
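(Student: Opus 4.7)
The plan is to argue by contradiction, using the nesting structure of the sets $\overline{J^+_t(U)}$ that enters the definition of the attractor. Suppose that the conclusion fails: then after extracting a subsequence, we may assume $z_i \notin V$ for every $i$, while $z_i \in K$. Since $K$ is compact and $M \setminus V$ is closed, up to a further extraction $z_i$ converges to some point $z \in K \setminus V$. The goal will then be to show $z \in A \subset V$, which is the desired contradiction.

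To place $z$ in the attractor, I would fix an arbitrary $t \ge t_0$ (where $t_0$ is given by the pre-attractor condition $\overline{J^+_{t_0}(U)} \subset U$). Because $\ell_h(\gamma_i) \to +\infty$, for all $i$ large enough we have $\ell_h(\gamma_i) > t$, and since $\gamma_i(0) \in U$ this gives $z_i = \gamma_i(1) \in J^+_t(\gamma_i(0)) \subset J^+_t(U)$. The set $\overline{J^+_t(U)}$ is closed, so passing to the limit yields $z \in \overline{J^+_t(U)}$. As this holds for every $t \ge t_0$, we conclude $z \in \bigcap_{t \ge t_0}\overline{J^+_t(U)} = A \subset V$, contradicting $z \notin V$.

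The argument is essentially formal once one has the right compactness: the key use of the hypothesis is that $z_i \in K$ compact, which lets us extract a limit point $z$, and that $V$ is open, which lets us keep that limit point outside $V$. I do not expect any real obstacle; no adapted-metric machinery from the previous sections is needed here, because we only use the definition of pre-attractor and the elementary fact that $J^+_t$ is monotone in $U$ (i.e., $J^+_t(\gamma_i(0)) \subset J^+_t(U)$). The completeness of $h$ (used elsewhere to ensure compactness of closed balls) is not required, since compactness of the endpoint sequence is supplied directly by the hypothesis $z_i \in K$.
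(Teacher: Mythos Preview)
Your proof is correct and follows essentially the same approach as the paper: argue by contradiction, extract a convergent subsequence of the $z_i$ staying outside $V$, and show that the limit $z$ lies in every $\overline{J^+_t(U)}$ and hence in $A\subset V$. The only cosmetic difference is that the paper indexes the parameter $t$ by the lengths $\ell_h(\gamma_i)$ rather than an arbitrary $t\ge t_0$, and derives the contradiction from $z_i\in V$ for large $i$ rather than directly from $z\notin V$; the content is the same.
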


\begin{proof} We will prove this result by contradiction.  Let us assume that we can extract a sub-sequence (still written $z_i$) such that $z_i \notin V$.  After a second extraction, we can assume that $z_i$ converges to a $z\in K$.\\
\indent Consider $i\in \N$.  If $j$ is large enough, then $\ell(\gamma_j)\ge \ell(\gamma_i)$, hence $z_j\in J^+_{\ell(\gamma_i)}(U)$.  Therefore $z\in \overline{J^+_{\ell(\gamma_i)}(U)}$ for all $i$, and $z\in A$, which gives us $z_i \in V$ for any large $i$, which is a contradiction with our hypothesis.  \end{proof}

This result means that long future directed curves that start in the pre attractor and that do not go to infinity will be attracted by $A$.  Another important fact is that the basin of attraction is open.

\begin{coro} \label{basin_open} Let $h$ be a Riemannian metric adapted to $g$, and let $U$ be a pre attractor and $A$ its attractor.  Then the basin of  $U$-attraction $B(A,U)$ is open.  \end{coro}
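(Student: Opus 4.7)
The plan is to show that every $p\in B(A,U)$ has a neighbourhood contained in $B(A,U)$ by combining the continuity of the Hausdorff-valued future map (Theorem~\ref{theo:continuity_future} / Theorem~\ref{continuity_adapted}) with the pre attractor hypothesis $\overline{J^+_{t_0}(U)}\subset U$. Concretely, pick $p\in B(A,U)$ and let $s\ge 0$ satisfy $\overline{J^+_s(p)}\subset U$. Since the full set $\overline{J^+_s(p)}$ is typically non-compact, I truncate by length: fix a threshold $T>s+t_0$ and work with $\overline{J^+_{s,T}(p)}$, which is compact and contained in the open set $U$.

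Because $U$ is open and $\overline{J^+_{s,T}(p)}$ is compact, there exists $\varepsilon>0$ such that the $\varepsilon$-neighbourhood of $\overline{J^+_{s,T}(p)}$ still lies in $U$. Continuity of $q\mapsto \overline{J^+_{s,T}(q)}$ in the Hausdorff topology then yields a neighbourhood $V$ of $p$ with $\overline{J^+_{s,T}(q)}\subset U$ for every $q\in V$. I claim that $V\subset B(A,U)$, with uniform parameter $s':=T+t_0$.

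To verify the claim, fix $q\in V$ and a future directed causal curve $\gamma$ starting at $q$ with $\ell_h(\gamma)>T+t_0$. Pick $\eta>0$ small enough that $s<s+\eta<T$ and $\ell_h(\gamma)-s-\eta>t_0$, then split $\gamma$ at the point $q'$ reached at $h$-arc length $s+\eta$. The first piece has length in $(s,T)$, so $q'\in J^+_{s,T}(q)\subset U$. The second piece starts at $q'\in U$ and has length strictly greater than $t_0$, so the endpoint of $\gamma$ lies in $J^+_{t_0}(q')\subset J^+_{t_0}(U)$. Thus $J^+_{T+t_0}(q)\subset J^+_{t_0}(U)$, and the pre attractor property gives
\[
\overline{J^+_{T+t_0}(q)}\subset \overline{J^+_{t_0}(U)}\subset U,
\]
so $q\in B(A,U)$ and $V\subset B(A,U)$.

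The main obstacle is precisely that $\overline{J^+_s(p)}$ is generally non-compact, preventing a direct use of the continuity theorem. The whole point of introducing the threshold $T>s+t_0$ is to reduce the control of short curves (length in $(s,T)$) to the Hausdorff continuity, while the contribution of long curves (length $>T$) is absorbed by the pre attractor property $\overline{J^+_{t_0}(U)}\subset U$ after splitting at arc length $s+\eta$. This is where the adaptedness of the Riemannian metric $h$ enters essentially: it is needed both for the continuity of $\overline{J^+_{s,T}}$ and to guarantee that parametrising causal curves by $h$-arc length is well-behaved, so that the splitting argument yields an endpoint at a prescribed length.
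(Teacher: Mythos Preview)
Your argument is correct, but it follows a somewhat different path from the paper's proof. The paper shows that each set $B_s=\{x\in M\mid J^+_s(x)\subset U\}$ is open by a sequential contradiction: given $x_k\to x$ with $x_k\notin B_s$, one picks curves $\gamma_k$ from $x_k$ of length in $(s,s+T]$ ending outside $U$ (the upper bound comes from the pre attractor condition), passes to a limit curve, and invokes Proposition~\ref{long_curve} to produce a curve from $x$ of length $>s$ ending outside $U$. Your proof instead works forward: you use the Hausdorff continuity of $q\mapsto \overline{J^+_{s,T}(q)}$ (Theorem~\ref{continuity_adapted}) to get a neighbourhood $V$ of $p$ with $\overline{J^+_{s,T}(q)}\subset U$ for all $q\in V$, and then a clean splitting at arc length $s+\eta$ together with $\overline{J^+_{t_0}(U)}\subset U$ shows $\overline{J^+_{T+t_0}(q)}\subset U$.

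Both approaches ultimately rest on the same technical core (Proposition~\ref{long_curve}, which underlies Theorem~\ref{continuity_adapted}), so neither is more elementary. Your version is more direct and makes the role of the pre attractor condition very transparent; the paper's version stays closer to the limit-curve machinery and avoids appealing to the Hausdorff-continuity theorem as a black box. One small point: your use of Theorem~\ref{continuity_adapted} requires $s>0$, so you should note that one may always increase $s$ slightly (since $J^+_{s'}(p)\subset J^+_s(p)$ for $s'>s$) to ensure this.
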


\begin{proof} Let $B_s=\{x\in M\vert J^+_s(x)\subset U\}$, so that $B(A,U)=\bigcup_{s>0}B_s$. Let $(x_k)_{k\in\N}$ be a sequence in $M\setminus B_s$. Assume that $x_k\to x\in M$. For all $k\in \N$, there is a future directed curve $\gamma_k:\intff{0}{1}\to M$ such that $\gamma_k(0)=x_k$ and $\gamma_k(1)\notin U$, of length $\ell_h(\gamma_k)> s$. Since $J^+_T(U)\subset U$, we can also assume that $\ell_h(\gamma_k)\le s+T$, therefore, up to changes of parameters, we can assume that $\gamma_k$ converges towards a future directed curve $\gamma$. According to Proposition \ref{long_curve}, we can find a future curve $\eta$ such that $\eta(0)=x$ and $\eta(1)=\gamma(1)\notin U$ of length greater than $s$, which shows that $x\notin B_s$, i.e. $B_s$ is open, and so is $B(A,U)=\bigcup_{s>0}B_s$.
\end{proof}

%------------------------------------------------------------------------------------------------------------------------------
%------------------------------------------------------------------------------------------------------------------------------
%------------------------------------Construction-of-a-time-function-----------------------------------------------------------
%------------------------------------------------------------------------------------------------------------------------------
%------------------------------------------------------------------------------------------------------------------------------

\subsection{Construction of a $B(A)\setminus A$-time function}

We start with a refined version of Urysohn's Lemma adapted to attractors. Conley's construction of a Lyapunov function for an attractor in \cite{conley} associated to a flow $\p^t$ consists in considering the function $\sup_{t\ge 0} f(\p^t(x))$ where $f(x)=\frac{d(x,A)}{d(x,A)+d(x,B(A)^c)}$.  In the non compact case, Hurley noticed in  \cite{hurley_98} that one has to choose a different function $f$. 

\begin{lemme} \label{construction} Let $U$ be a pre attractor and $A$ its attractor.  Then there exists a continuous function $f:  M \rightarrow [0,1]$ such that:
\begin{enumerate}\item $f^{-1}(0)=A$ \item $f^{-1}(1)=M\setminus B(A,U)$ \item For all $x\in B(A,U)$, there is a   neighborhood $N$ of $x$ such that: $$\forall \varepsilon>0 ~\exists t_0>0~ \forall y\in J^+_{t_0}(N)~~f(y)<\varepsilon$$  \end{enumerate} \end{lemme}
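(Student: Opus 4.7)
My plan is to build $f$ as a uniformly convergent series of Urysohn-type separating functions indexed by the nested levels of the pre-attractor and its basin, mirroring Conley's classical construction modified in Hurley's style to handle non-compactness.

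Setup. Choose $t_0>0$ with $\overline{J^+_{t_0}(U)}\subset U$, and for each $n\ge 1$ set
\[ A_n=\overline{J^+_{nt_0}(U)}, \qquad B_n=\{p\in M:J^+_{nt_0}(p)\subset U\}. \]
The sets $B_n$ are open (the proof of Corollary \ref{basin_open} establishes exactly this), they increase, and $\bigcup_n B_n=B(A,U)$, while the $A_n$ are closed, decrease, and have intersection $A$. An approximation argument via the isotopy of a causal vector field tangent to a given curve (as in the proof of Lemma \ref{inner}) shows $J^+(A)\subset A$; combined with $A\subset U$, this gives $A\subset B_n$ for every $n$, so $A$ and $M\setminus B_n$ are disjoint closed sets.

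Building blocks and series. Define
\[ g_n(x)=\frac{d_h(x,A)}{d_h(x,A)+d_h(x,M\setminus B_n)}, \]
continuous from $M$ to $[0,1]$, with $g_n^{-1}(0)=A$ and $g_n^{-1}(1)=M\setminus B_n$ (with the obvious conventions when $A=\emptyset$ or $B_n=M$). Then $f=\sum_{n\ge 1}2^{-n}g_n$ converges uniformly, hence is continuous. Condition (1) holds since $f(x)=0$ iff every $g_n(x)=0$ iff $x\in A$; condition (2) holds since $f(x)=1$ iff every $g_n(x)=1$ iff $x\in\bigcap_n(M\setminus B_n)=M\setminus B(A,U)$.

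Attractivity and main obstacle. For (3), given $x\in B(A,U)$, pick $N$ with $x\in B_N$ and take $\mathcal N=B_N$. The geometric key is that any $y\in J^+_{(N+m)t_0}(\mathcal N)$ lies in $A_m$: splitting a witnessing causal curve from some $x'\in\mathcal N$ to $y$ by arc length at parameter $Nt_0$ puts the midpoint in $J^+_{Nt_0}(\mathcal N)\subset U$, and the remaining tail of length $>mt_0$ sends $y$ into $J^+_{mt_0}(U)\subset A_m$. Truncating the series at $K=\lceil\log_2(2/\varepsilon)\rceil$ handles the tail $\sum_{n>K}2^{-n}<\varepsilon/2$, reducing (3) to showing $g_n(y)<\varepsilon/2$ for $n\le K$ and $y\in A_m$ with $m$ large. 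This is the main obstacle: if $A_m$ were compact, Lemma \ref{attractor} would give $d_h(y,A)\to 0$ uniformly on $A_m$ and the conclusion would follow; in general a sequence $y_m\in A_m$ may escape every compact. Following Hurley's treatment of non-compact phase spaces, the remedy is to twist each $g_n$ by a proper weight $\psi:M\to(0,1]$ equal to $1$ on $A\cup(M\setminus B(A,U))$ and decaying to $0$ along any sequence escaping every compact inside $B(A,U)$ (built from a $\sigma$-compact exhaustion of $B(A,U)$). The twisted series retains properties (1) and (2), and now (3) follows by splitting $J^+_t(\mathcal N)$ into a bounded part (handled by Lemma \ref{attractor}) and an escaping part (handled by the decay of $\psi$).
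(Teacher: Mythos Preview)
Your overall strategy---Urysohn ratios plus a Hurley-style weight for non-compactness---matches the paper's in spirit, but the final step is only a gesture, and as stated it cannot work. You ask for a continuous $\psi:M\to(0,1]$ equal to $1$ on $M\setminus B(A,U)$ and tending to $0$ along every sequence in $B(A,U)$ that escapes all compacts of $M$. No such function exists once $\partial B(A,U)$ is unbounded: for each $k$ choose $z_k\in\partial B(A,U)\subset M\setminus B(A,U)$ with $d(z_k,x_0)\ge k$; continuity of $\psi$ at $z_k$ gives $\delta_k>0$ with $\psi>\tfrac12$ on $B(z_k,\delta_k)$, and since $z_k\in\partial B(A,U)$ there is $w_k\in B(A,U)\cap B(z_k,\min(\delta_k,1/k))$, so $\psi(w_k)>\tfrac12$; yet $d(w_k,x_0)\to\infty$ forces $\psi(w_k)\to 0$. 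Multiplying your $g_n$'s by such a $\psi$ therefore does not yield a continuous $f$. (Restricting the decay requirement to sequences in $A_1=\overline{J^+_{t_0}(U)}$ does not help, since $A_1$ may approach $M\setminus B(A,U)$ at infinity.)

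The paper sidesteps this by placing the weight in the \emph{denominator} of a single Urysohn ratio rather than as a multiplicative twist. It builds $\Psi:M\to[1,\infty)$, via a partition of unity subordinate to a compact exhaustion $(K_n)$, so that $\Psi(x)\,d(x,M\setminus B(A,U))\ge n$ on $\overline{J^+_T(U)}\cap(M\setminus K_n)$, and sets
\[
f(x)=\frac{\mu(x)}{\mu(x)+\Psi(x)\,d(x,M\setminus B(A,U))},\qquad \mu(x)=\min\bigl(1,d(x,A)\bigr).
\]
Because $\Psi\ge 1$ multiplies $d(x,M\setminus B(A,U))$, continuity and properties (1)--(2) are automatic whatever $\Psi$ is; property (3) follows since any $y\in J^+_{t_0}(N)$ with $t_0$ large lies in $\overline{J^+_T(U)}$ and is either close to $A$ (so $\mu(y)<\varepsilon$) or outside $K_n$ with $n>1/\varepsilon$ (so the denominator exceeds $n$). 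Your series over the $B_n$ is not needed; if you want to keep it, insert a weight $\Psi\ge 1$ multiplying $d(x,M\setminus B_n)$ in the denominator of each $g_n$, rather than multiplying $g_n$ itself by a decaying $\psi$.
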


\begin{proof} We will first look for a function $\Psi$ such that the following function will almost satisfy our conditions:
\begin{displaymath} f(x) = \frac{d(x,A)}{d(x,A) + \Psi(x) d(x,M\setminus B(A,U))} \end{displaymath}

\noindent \textbf{First step:}  Construction of $\Psi$\\
 \indent We start by writing $M=\bigcup_{n\in \mathbb{N}} K_n$ where the $K_n$ are compact and where for all $n$, $K_n$ lies in the interior of $K_{n+1}$.\\ \indent Let $T>0$ be such that $\overline{J^+_T(U)}\subset U$. The goal is to obtain a continuous function $\Psi:  M \rightarrow [1,+\infty[$ such that $\Psi(x)d(x,M\setminus B(A,U))\ge n$ on $\overline{J^+_T(U)} \cap M\setminus K_n$\\
\indent For $i\in \N$, we denote by $U_i$ a relatively compact open set of $M$, such that $\overline{U_i}\subset B(A,U)$ and $(K_i\setminus K_{i-1}) \cap\overline{J^+_T(U)}\subset U_i$, and $U_i\cap K_{i-2} = \emptyset$ (e.g. an $\e$ neighborhood of $(K_i\setminus K_{i-1})\cap \overline{J^+_T(U)}$ with $\e$ small enough).\\ \indent We now consider $U_{\infty}=M\setminus \overline{J^+_T(U)}$ so that we have an open cover $M=\bigcup_{i\in \mathbb{N}\cup\{\infty\}}U_i$, and let $\theta_i$ be a partition of unity associated to this open cover.\\
 \indent Finally, we consider $\alpha_i=\min(\frac{i}{\inf_{U_i}d(.,M\setminus B(A,U))},1)$ and $\alpha_{\infty}=1$, and let $\Psi = \sum \theta_i \alpha_i$. Since $\alpha_i\ge 1$ for all $i$, we have $\psi \ge 1$, and $\psi$ is continuous (even smooth) because the sum is locally finite.\\
\indent Let $x\in \overline{J^+_T(U)} \cap M\setminus K_n$. If $x\in U_i$, then $x\in K_i$, therefore $i > n$. We have:

\begin{eqnarray*} \psi(x)d(x,M\setminus B(A,U)) &=&d(x,M\setminus B(A,U)) \sum_{x\in U_i} \theta_i(x) \alpha_i \\ &\ge& d(x,M\setminus B(A,U))\sum_{x\in U_i} \theta_i(x) \frac{i}{\inf_{U_i}d(.,M\setminus B(A,U))} \\ &\ge &  \sum_{x\in U_i}i \theta_i (x) \underbrace{ \frac{d(x,M\setminus B(A,U))}{\inf_{U_i}d(.,M\setminus B(A,U))}}_{\ge 1} \\&\ge & \sum_{x\in U_i}i \theta_i(x) >  n\sum_{x\in U_i} \theta_i(x) =n \end{eqnarray*}

\noindent \textbf{Second step:}  Construction of $f$\\
\indent Define $\mu:M\to \R$ by  $\mu(x)=\min(1,d(x,A))$ (set $\mu(x)= 1$ if $A=\emptyset$)  and consider the function $f:M\to \R$ such that: $$f(x)=\frac{\mu(x)}{\mu(x)+\Psi(x) d(x,M\setminus B(A,U))}$$ \indent  If $B(A,U)=M$, then we set $f(x)=\frac{\mu(x)}{\mu(x)+\Psi(x)}$. The function $f$  is continuous, has values in $[0,1]$ and satisfies the two first requirements.\\

\noindent \textbf{Third step:}  Checking the last requirement\\
\indent Let $x\in B(A,U)$.  We know that $I^-(x)\cap B(A,U) \ne \emptyset$ since $B(A,U)$ is open and $x$ lies in the closure of $I^-(x)$.  Let $y\in I^-(x)\cap B(A,U)$, and let $N$ be a compact neighborhood of $x$ included in $I^-(x)\cap B(A,U)$.\\
\indent Since $y\in B(A,U)$, there is $t_1>0$ such that $J^+_{t_1}(y)\subset U$, therefore $J^+_{t_1}(N)\subset U$ and $\overline{J^+_{t_2}(N)}\subset \overline{J^+_T(U)}$ where $t_2=t_1+T$.\\
\indent Let $\varepsilon>0$.  Because of Lemma \ref{attractor}, if we consider $n>\frac{1}{\varepsilon }$ and $V$ the $\e$-neighborhood of $A$, then there is $t_0>t_2$ such that $J^+_{t_0}(N)\subset V\cup (M\setminus K_n)$ (if it was not the case, we could construct a sequence $\gamma_i$ of curves with length growing to infinity and with endpoints in $K_n$ but not in $V$, which would be a contradiction).\\ \indent Therefore if $y\in J^+_{t_0}(N)$, then either $y\in V$,
in which case $M(y)<\varepsilon$ results in $f(y)<\varepsilon$, either $y\notin K_n$ in which case $f(y)\le \frac{1}{0+n}<\varepsilon$.  \end{proof}

We will now use this function to construct a $B(A,U)\setminus A$-time function.

\begin{lemme} \label{g_t_continuous} Let us consider an attractor $A$ and $f$ the function given by Lemma  \ref{construction}.  For $t\ge 0$ we consider:  \begin{displaymath} g_t(x)=\sup_{J^+_t(x)}f \end{displaymath} The  function $g_t$ is continuous.  \end{lemme}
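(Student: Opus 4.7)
Plan: I will show $g_t$ is both upper and lower semi-continuous on all of $M$.

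Lower semi-continuity at every $x \in M$ follows directly from Lemma \ref{inner}. Given $y_k \to x$ and $z \in J^+_t(x)$, pick $T > t$ with $z \in J^+_{t,T}(x)$; the lemma yields $z_k \in J^+_{t,T}(y_k) \subset J^+_t(y_k)$ with $z_k \to z$, and continuity of $f$ gives $\liminf g_t(y_k) \ge \liminf f(z_k) = f(z)$. Taking the supremum over $z$ yields $\liminf g_t(y_k) \ge g_t(x)$.

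For upper semi-continuity I distinguish two cases. First, for $x \in B(A,U)$ I invoke property (3) of $f$ from Lemma \ref{construction}: there is a fixed neighborhood $N$ of $x$ such that for each $\e > 0$ some $t_0 > t$ makes $f < \e$ on $J^+_{t_0}(N)$. For $y \in N$, writing $J^+_t(y) = J^+_{t,t_0}(y) \cup J^+_{t_0}(y)$ gives
\[
\bigl| g_t(y) - \sup_{\overline{J^+_{t,t_0}(y)}} f \bigr| \le \e.
\]
The approximant $y \mapsto \sup_{\overline{J^+_{t,t_0}(y)}} f$ is continuous: Theorem \ref{continuity_adapted} makes $y \mapsto \overline{J^+_{t,t_0}(y)}$ continuous into $\mathrm{Comp}(M)$, and the corollary of Section \ref{sec:continuity_future} makes $K \mapsto \max_K f$ continuous (using that the adapted $h$ is complete, hence proper). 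So on $N$ the function $g_t$ is a uniform limit of continuous functions as $\e \to 0$, and is therefore continuous at $x$.

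For $x \notin B(A,U)$ I reduce upper semi-continuity to the trivial bound $g_t \le 1$ by proving $g_t(x) = 1$. It suffices to exhibit $y \in \overline{J^+_t(x)}$ with $y \notin B(A,U)$, so that $f(y) = 1$. Suppose instead $\overline{J^+_t(x)} \subset B(A,U)$. Fix $T > t$; completeness of $h$ makes $\overline{J^+_{t,T}(x)}$ compact. The basin decomposes as $B(A,U) = \bigcup_{s>0} B_s$ where $B_s = \{y : J^+_s(y) \subset U\}$ is open (from the proof of Corollary \ref{basin_open}) and monotone non-decreasing in $s$; compactness therefore gives $S > 0$ with $\overline{J^+_{t,T}(x)} \subset B_S$. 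Now pick any future causal curve from $x$ of $h$-length exceeding $t + S + \delta$, with $0 < \delta < T - t$: splitting at arc length $t + \delta$ produces an intermediate point in $J^+_{t,T}(x) \subset B_S$ whose remaining portion has length $> S$ and therefore ends in $U$. Hence $x \in B_{t + S + \delta} \subset B(A,U)$, contradicting $x \notin B(A,U)$.

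The main obstacle is this second case, where property (3) is unavailable: the delicate point is to control the supremum of $f$ over the (possibly non-compact) set $J^+_t(x)$ without any tail estimate. The key is to leverage compactness of $\overline{J^+_{t,T}(x)}$ (from the completeness of the adapted metric) against the open, monotone decomposition $B(A,U) = \bigcup B_s$ to force the reductio, turning the upper semi-continuity question into the simple bound $g_t \le 1 = g_t(x)$.
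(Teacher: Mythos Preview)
Your proof is correct and follows essentially the same approach as the paper: both rely on the continuity of $y\mapsto \overline{J^+_{t,T}(y)}$ together with property (3) of Lemma~\ref{construction} to reduce the supremum to a compact range when $x\in B(A,U)$, and both establish $g_t(x)=1$ for $x\notin B(A,U)$ by covering a compact slice of the future with the open sets $B_s$. Your organization is slightly tidier --- handling lower semi-continuity once and for all via Lemma~\ref{inner}, and packaging the $B(A,U)$ case as a uniform limit of continuous approximants rather than splitting into the sub-cases $g_t(x)>0$ and $g_t(x)=0$ as the paper does --- but the underlying ideas are identical.
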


\begin{proof} The idea is to see that locally, we can find $t'>t$ such that $g_t(x)=\max_{\overline{J^+_{t,t'}(x)}} f$, and use the continuity of the map $x\mapsto \overline{J^+_{t,t'}(x)}$ to conclude.\\
\indent  Let us start by considering the case where $x\in B(A,U)$.  If $g_t(x)>0$, then let $U$ be a small compact neighborhood of $x$, and set: 
$$C=\min_{y\in U} \max_{z\in \overline{J^+_{t,t+1}}(y)} f(z)$$
 \indent  Let us show that if $U$ is small enough, then $C>0$.  If not, then we can find a sequence $x_k\to x$ such that $\max_{ \overline{J^+_{t,t+1}}(x_k)} f = 0$, therefore by continuity of $z\to \overline{J_{t,t+1}^+}(z)$, we have $\max_{\overline{J_{t,t+1}^+}(x)}f=0$ and $\overline{J_{t,t+1}^+}(x) \subset A$, therefore $\overline{J^+_t}(x)\subset A$ and $g_t(x)=0$, which is absurd.  We choose $U$ such that $C>0$.  According to Lemma \ref{construction}, there is a neighborhood $N$ of $x$ and $t_0>0$ such that $f(y)\le \frac{C}{2}$ for all $y\in J^+_{t_0}(N)$. Therefore, for $y\in N \cap U$, we have $g_t(y)=\max_{\overline{J_{t,t_0}^+}(y)} f$ which is a continuous function, and $g_t$ is continuous at $x$.\\
 \indent If $g_t(x)=0$, let $\e >0$.  By Lemma \ref{construction}, there is a neighborhood
$N$ of $x$ and $t_0>t$ such that $f(y)\le \e$ for $y\in J^+_{t_0}(N)$.  Let $W$ be a neighborhood of $x$ such that $\max_{\overline{J_{t,t_0}^+}(y)} f <\e$ for $y\in W$ (recall that this map is continuous and has value $0$ at $x$). For $y\in W\cap N$, we have $g_t(y)\le \e$, therefore $g_t$ is continuous at $x$.\\ \indent Let us now consider the case where $x\notin B(A,U)$.  First, let us show that $g_t(x)=1$. If $g_t(x)<1$, then $\overline{J^+_t}(x) \subset f^{-1}(\intfo{0}{1})=B(A,U)$. Consider the set $E$ of endpoints of future causal curves starting at $x$ of length $t$.  Then $E$ is relatively compact and $\overline{E} \subset \overline{J^+_t}(x) \subset B(A,U)$.  Since $B(A,U) = \bigcup_{s\ge 0} B_s$ where $B_s =\{ x\in M\vert J^+_s(x)\subset U\}$ is open (see the proof of Corollary \ref{basin_open}), we have a finite cover $E\subset \bigcup_{1\le i \le k} B_{t_i}$.  Set $t'=t+ \max t_i$, we find that $J^+_{t'}(x)\subset U$ and $x\in B(A,U)$ which is absurd.  Therefore $g_t(x)=1$.\\
\indent Finally, consider $T>t$ such that $g_t(x)=\sup_{J_{t,T}^+(x)}f$.  Let $\e>0$ and let $U$ be a neighborhood of $x$ such that $\max_{\overline{J_{t,T}^+}(y)}f > 1-\e$ for $y \in U$.  We have $g_t(y)\ge \max_{\overline{J_{t,T}^+}(y)}f \ge 1-\e$ for $y\in U$, therefore $g_t$ is continuous at $x$.

\end{proof}

Let us see how we can obtain a $B(A,U)\setminus A$-time function.

\begin{prop} \label{decreasing} Let $A$ be an attractor.  Let $x\in B(A,U)\setminus A$ and $y\in J^+(x)\setminus \{x\}$, there is an interval $I$ of real numbers with non empty interior such that $g_s(y)<g_s(x)$ for all $s\in I$.
\end{prop}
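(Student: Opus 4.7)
The plan is to compare $g_s(y)$ with $g_s(x)$ via a shift, using a future causal curve from $x$ to $y$ to absorb a fixed amount of parameter, and then to exploit strict decay of $s \mapsto g_s(x)$ on some open interval.

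First I would fix a future directed causal curve $\gamma$ from $x$ to $y$ and set $\tau := \ell_h(\gamma)$; since $y \neq x$ and $h$ is a genuine Riemannian metric, $\tau>0$. Concatenating $\gamma$ with any future causal curve from $y$ of length $>s$ yields a future curve from $x$ of length $>s+\tau$, so $J^+_s(y)\subset J^+_{s+\tau}(x)$ and therefore
\[
g_s(y)\le g_{s+\tau}(x)\qquad\text{for all }s>0.
\]
It is thus enough to produce an open interval $I$ on which $g_{s+\tau}(x)<g_s(x)$.

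Write $h(s):=g_s(x)$. By Lemma \ref{g_t_continuous} the map $h$ is continuous, and it is nonincreasing in $s$ because $J^+_s(x)$ shrinks as $s$ grows. I would then check two asymptotic facts. Fact (a): $\liminf_{s\to 0^+}h(s)\ge f(x)>0$. To see this, choose any arc-length parametrized future causal curve $\sigma$ emanating from $x$; for $r>s$ one has $\sigma(r)\in J^+_s(x)$, so $h(s)\ge f(\sigma(r))$, and letting $r\to s^+$ and then $s\to 0^+$ gives $h(s)\to f(x)$, which is positive because $x\notin A=f^{-1}(0)$. Fact (b): $\lim_{s\to\infty}h(s)=0$. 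This is immediate from the third property in Lemma \ref{construction} applied to a neighbourhood $N$ of $x\in B(A,U)$: for every $\varepsilon>0$ there is $t_0$ with $f<\varepsilon$ on $J^+_{t_0}(N)\supset J^+_{t_0}(x)$, hence $h(s)<\varepsilon$ for $s\ge t_0$.

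The final step is a simple dichotomy. The function $\varphi(s):=h(s)-h(s+\tau)$ is continuous and nonnegative. If $\varphi\equiv 0$ then $h(s)=h(s+n\tau)$ for every $s>0$ and $n\in\N$, and letting $n\to\infty$ combined with (b) forces $h\equiv 0$, contradicting (a). So $\{\varphi>0\}$ is nonempty and, by continuity of $\varphi$, open; any nonempty open subset of $\R$ contains a nondegenerate interval $I$. For $s\in I$ we then have
\[
g_s(y)\le g_{s+\tau}(x)<g_s(x),
\]
which is the desired conclusion. The only nonroutine point is this dichotomy, which must rule out the pathological possibility that $h$ is flat between scales $s$ and $s+\tau$ everywhere; that is precisely where the asymptotic bounds (a) and (b) are needed, and they in turn are the reason the function $f$ of Lemma \ref{construction} was built with its third, ``uniform attraction'' property rather than just the two boundary conditions.
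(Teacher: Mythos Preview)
Your overall strategy coincides with the paper's: both use the shift inequality $g_s(y)\le g_{s+\tau}(x)$ coming from a fixed future curve of length $\tau>0$, together with the facts that $s\mapsto g_s(x)$ is nonincreasing, is bounded below by $f(x)>0$ for small $s$, and tends to $0$ as $s\to\infty$ (from property (3) of Lemma~\ref{construction}). The paper then extracts the interval directly by a threshold argument: with $\varepsilon=f(x)/2$ and $a=\inf\{u>0: g_u(x)\le\varepsilon\}$, one gets $0<a<\infty$, and monotonicity alone gives $g_s(x)>\varepsilon\ge g_{s+\tau}(x)$ for every $s\in(\max(a-\tau,0),a)$. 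No continuity of $s\mapsto g_s(x)$ is invoked.

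Your dichotomy argument, by contrast, does rely on continuity of $h(s)=g_s(x)$ in the variable $s$: you use it to conclude that $\{\varphi>0\}$ is open and hence contains an interval. But Lemma~\ref{g_t_continuous} only asserts that $x\mapsto g_t(x)$ is continuous for \emph{fixed} $t$; it says nothing about continuity in $t$. So as written there is a gap at exactly the step where you pass from ``$\varphi$ is not identically zero'' to ``$\varphi>0$ on an open interval''. The claim $s\mapsto g_s(x)$ is continuous is in fact true---one can prove right-continuity directly from the definition of $J^+_s(x)$ (any witness curve has length strictly greater than $s$), and left-continuity by extending near-optimal curves of length just above $s$ by an arbitrarily short tail and using continuity of $f$---but this is a separate argument you would have to supply. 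The quicker fix is simply to replace your dichotomy by the paper's threshold construction, which uses only monotonicity.
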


\begin{proof} \indent Let $t>0$ such that $y\in J^+_t(x)$.  Let $\e = \frac{f(x)}{2}$ and consider:
\begin{displaymath} a = \inf \{ u >0 / ~g_u(x)  \le \e \}  \end{displaymath}
\indent The continuity of $f$ implies that $a>0$. Let $I =\intoo{\max(a-t, 0)}{a}$. If  $s\in I$, then $g_s(x) > \e$ and  $s+t > a$, therefore $g_s(y) \le g_{s+t}(x) \le g_a(x)\le \e < g_s(x)$, and $g_s(y)<g_s(x)$.
\end{proof}

\begin{coro} \label{conclusion} Let $A$ be an attractor.  Consider $(u_q)_{q\in \Q_+^*}$ a sequence of positive real numbers such that $\sum_{q\in \Q_+^*} u_q = 1$.  Then $\tau_A=\sum_{q\in \Q_+^*} u_q (1-g_q)$ is a $B(A,U)\setminus A$-time function.  \end{coro}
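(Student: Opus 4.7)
The plan is to verify the three defining properties of a $B(A,U)\setminus A$-time function in sequence: continuity of $\tau_A$, non-strict monotonicity along all future directed causal curves, and strict monotonicity from points in $B(A,U)\setminus A$. All the hard work has been done in Lemma \ref{g_t_continuous} and Proposition \ref{decreasing}; this corollary is essentially a bookkeeping exercise that packages the family $(g_q)_{q\in\Q_+^*}$ into a single function.

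First I would handle continuity. Each $g_q$ is continuous by Lemma \ref{g_t_continuous} and takes values in $[0,1]$, so each summand $u_q(1-g_q)$ is continuous and bounded by $u_q$. Since $\sum_{q\in\Q_+^*} u_q = 1 < \infty$, the Weierstrass M-test gives uniform convergence of the series $\sum_q u_q(1-g_q)$, and the uniform limit of continuous functions is continuous. Hence $\tau_A: M\to\R$ is continuous.

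Next, non-strict monotonicity. Take $x\in M$ and $y\in J^+(x)$. I would observe that if $z\in J^+_q(y)$, then concatenating a future directed causal curve from $x$ to $y$ with one from $y$ to $z$ of $h$-length greater than $q$ produces a future directed causal curve from $x$ to $z$ of $h$-length greater than $q$; hence $J^+_q(y)\subset J^+_q(x)$. Taking suprema of $f$ yields $g_q(y)\le g_q(x)$, so $1-g_q(y)\ge 1-g_q(x)$ for every $q\in\Q_+^*$. Since each $u_q>0$, summing gives $\tau_A(y)\ge \tau_A(x)$.

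Finally, strict monotonicity on $B(A,U)\setminus A$. For $x\in B(A,U)\setminus A$ and $y\in J^+(x)\setminus\{x\}$, Proposition \ref{decreasing} supplies an interval $I\subset\R$ with nonempty interior such that $g_s(y)<g_s(x)$ for every $s\in I$. Since $I$ contains an open subinterval it meets $\Q_+^*$, so there is some $q_0\in I\cap\Q_+^*$ with $1-g_{q_0}(y)>1-g_{q_0}(x)$. Combining this strict inequality at $q_0$ (weighted by $u_{q_0}>0$) with the non-strict inequalities established in the previous step at all other rationals, we conclude $\tau_A(y)>\tau_A(x)$. The only mild subtlety is checking that $I\cap\Q_+^*$ is nonempty, which follows from $I$ having nonempty interior and being a subset of $\intoo{0}{+\infty}$ (as is clear from the construction in Proposition \ref{decreasing}, where $a>0$ and $I\subset\intoo{0}{a}$). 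This completes the three properties, and therefore the proof.
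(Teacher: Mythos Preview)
Your proof is correct and follows essentially the same approach as the paper's own proof: continuity via uniform (normal) convergence of the series, non-strict monotonicity from the inclusion $J^+_q(y)\subset J^+_q(x)$, and strict monotonicity by selecting a rational $q_0$ in the interval $I$ furnished by Proposition \ref{decreasing}. Your explicit check that $I\cap\Q_+^*\neq\emptyset$ is a small clarification the paper leaves implicit.
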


\begin{proof} The function $\tau_A$ is continuous because the $g_q$ are continuous and the sum converges normally.  Let $x\in M$, and let us consider $y\in J^+(x)$.  For $t\ge 0$, we have $J^+_t(y) \subset J_t^+(x)$, which gives us $g_t(y) \le g_t(x)$, and $\tau_A(y) \ge \tau_A(x)$.  If $x\in B(A,U)\setminus A$ and $y\ne x$, then Proposition \ref{decreasing} gives us an interval with non empty interior $I$ such that $g_t(y) < g_t(x)$ for $t\in I$.  Let $q_0 \in I \cap \Q$.
\begin{eqnarray*} \tau_A(y) &=& u_{q_0} (1- g_{q_0}(y)) + \sum_{q\ne q_0} u_q (1-g_q(y)) \\ &\le& u_{q_0} (1-
g_{q_0}(y)) + \sum_{q\ne q_0} u_q (1-g_q(x)) \\ &<& u_{q_0} (1- g_{q_0}(x)) + \sum_{q\ne q_0} u_q (1-g_q(x)) \\ &<&
\tau_A(x) \end{eqnarray*} \end{proof}

We can easily extend this to the basin $B(A)$.

\begin{proof}[Proof of Theorem \ref{mainthm}]  Since $B(A)$ is a subset of $M$, it is a separable metric space and therefore it satisfies the Lindel\"of property: of any open cover we can extract a countable cover. This allows us to choose a sequence of pre attractors $(U_n)_{n\in \N } $ with attractor $A$  such that $B(A)=\bigcup_{n\in \N} B(A,U_n)$. For $n\in \N$, let $\tau_n$ be a $B(A,U_n)\setminus A$-time function.  The function $\tau = \sum_{n\in \N} 2^{-n} \tau_n$ is a $B(A)\setminus A$-time function.
\end{proof}

The same technique provides a  time function for the union of all basins of attraction.
\begin{coro} \label{all_attractors} Let $(M,g)$ be a spacetime and let $\mathcal{A}$ be the set of all attractors. There is a $\bigcup_{A\in \mathcal{A}}B(A,U)\setminus A$-time function. \end{coro}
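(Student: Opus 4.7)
The plan is to mimic the final step of the proof of Theorem \ref{mainthm}, where a single basin $B(A)$ is covered by the countably many pre-attractor basins $B(A,U_n)$. Here one performs the analogous reduction for the (a priori uncountable) family $\mathcal{A}$ of attractors themselves, and then combines the individual time functions via a normally convergent weighted sum.

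First I would observe that the set $E = \bigcup_{A \in \mathcal{A}} \bigl(B(A) \setminus A\bigr)$ is open in $M$: indeed, $B(A)$ is open by Corollary \ref{basin_open}, and $A = \bigcap_{t \ge t_0} \overline{J^+_t(U)}$ is closed as an intersection of closed sets. Since $M$ is a second countable manifold, the subspace $E$ is Lindel\"of, and the open cover $\{B(A) \setminus A : A \in \mathcal{A}\}$ of $E$ admits a countable subcover $\bigl(B(A_n) \setminus A_n\bigr)_{n \in \N}$.

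For each $n$, Theorem \ref{mainthm} provides a $B(A_n) \setminus A_n$-time function $\tau_n : M \to \R$. Composing with $\arctan$, I may assume $\tau_n$ takes values in $\intoo{-\pi/2}{\pi/2}$; this preserves continuity, monotonicity along future directed causal curves, and strict monotonicity on $B(A_n) \setminus A_n$. I then set $\tau = \sum_{n \in \N} 2^{-n} \tau_n$: the series converges normally on $M$, so $\tau$ is continuous. For $x \in M$ and $y \in J^+(x)$, each term satisfies $\tau_n(y) \ge \tau_n(x)$, whence $\tau(y) \ge \tau(x)$. If in addition $x \in E$, pick $n$ such that $x \in B(A_n) \setminus A_n$; then $\tau_n(y) > \tau_n(x)$ for every $y \in J^+(x) \setminus \{x\}$, while all other terms are non-negative, so $\tau(y) > \tau(x)$, as required.

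There is no serious obstacle once Theorem \ref{mainthm} is granted; the one subtlety to watch is that the Lindel\"of extraction must be applied to the open cover $\{B(A) \setminus A\}_{A \in \mathcal{A}}$ rather than to the larger family $\{B(A)\}_{A\in\mathcal{A}}$. If one only ensured that the sets $B(A_n)$ cover $\bigcup_A B(A)$, a point $x \in B(A)\setminus A$ could still fall inside some chosen $A_n$ with $A_n \ne A$, and Theorem \ref{mainthm} would no longer yield the strict inequality at $x$.
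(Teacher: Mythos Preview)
Your proof is correct and follows essentially the same approach as the paper's: extract a countable subfamily $(A_n)$ via the Lindel\"of property applied to the open cover $\{B(A)\setminus A\}_{A\in\mathcal A}$, then form the weighted sum $\sum 2^{-n}\tau_n$. The $\arctan$ step is harmless but unnecessary, since the time functions produced by Theorem \ref{mainthm} (ultimately via Corollary \ref{conclusion}) already take values in a bounded interval.
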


\begin{proof} Let $U=\bigcup_{A\in \mathcal{A}}B(A)\setminus A$. By the Lindel\"of property, we can choose a sequence of attractors $(A_n)_{n\in \N } \in \mathcal{A}^{\N}$ such that $U=\bigcup_{n\in \N} B(A_n)\setminus A_n$. For $n\in \N$, let $\tau_n$ be a $B(A_n)\setminus A_n$-time function. The function $\tau = \sum_{n\in \N} 2^{-n} \tau_n$ is a $U$-time function.

\end{proof}

\section{Chain recurrence, and a Lorentzian Conley Theorem}
\label{sec:chain_recurrence}
Closed future curves are an obvious obstruction to the existence of a time function. Classically, the existence of a time function is linked to stable causality, which has an inconvenient: its definition involves other metrics, whereas the existence of a time function does not. We will see that we can find an obstruction to the existence of a time function that does not involve  nearby metrics: chain recurrence. The idea of chains  consists in joining points by sequences of long curves and small jumps from the end of a curve to the beginning of the next.\\
\indent Let us recall the definition of chain recurrence.\\

\noindent \textbf{Definition \ref{def_chain_rec}.} \emph{Let $\e \in \PP(M)$, $T>0$ and $p,q\in M$.\\ An $(\e,T)$-chain from $p$ to $q$ is a finite sequence of future directed causal curves $(\gamma_i:[0,1]\to M)_{i=1,\dots ,k}$ of length at least $T$ such that:} 
\begin{enumerate} \item $d(p,\gamma_1(0))\le \e( p)$
\item $d(\gamma_i(1),\gamma_{i+1}(0))\le \e(\gamma_i(1))$ for all $i<k$
\item $d(\gamma_k(1),q)\le \e(q)$
\end{enumerate}

\emph{A point $p\in M$ is said to be chain recurrent if for any $\e \in \PP(M)$ and $T>0$ there is an $(\e,T)$-chain from $p$ to $p$. We will denote by $R(g)$ the set of chain recurrent points.}\\

Conley noticed that chain recurrence is linked to attractors. The same goes for spacetimes.

\begin{prop} Let $x\notin R(g)$. There exists an attractor $A$ such that $x\in B(A)\setminus A$.
\end{prop}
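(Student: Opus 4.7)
The plan is to mimic Conley's argument for flows: from the hypothesis $x\notin R(g)$ we extract a ``forbidden distance'' and construct a pre attractor whose basin catches $x$ but whose attractor avoids it. First I would unpack the hypothesis to get a continuous $\varepsilon_{0}\in\PP(M)$ and $T_{0}>0$ such that no $(\varepsilon_{0},T_{0})$-chain from $x$ to $x$ exists. By shrinking $\varepsilon_{0}$ slightly (using its continuity and the local scale of its oscillation) we may also assume that triangle-inequality style estimates between $\varepsilon_{0}(p)$, $\varepsilon_{0}(q)$ and $\varepsilon_{0}(r)$ hold whenever the three points are mutually within $\varepsilon_{0}$ of each other; this is the ``room'' that makes concatenation of chains legal despite the asymmetric evaluation of $\varepsilon_{0}$ in conditions (2) and (3) of Definition~\ref{def_chain_rec}.

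Next I would define
\[
\Omega \;=\; \bigl\{\,y\in M : \text{there is an }(\varepsilon_{0},T_{0})\text{-chain from }x\text{ to }y\text{ with strict inequalities}\,\bigr\}
\]
and take as candidate pre attractor an open fattening
\[
U \;=\; \bigl\{\,y\in M : d(y,\Omega)<\tfrac{1}{4}\varepsilon_{0}(y)\,\bigr\}.
\]
Three things need checking. (a) $J^{+}_{T_{0}}(x)\subset\Omega$: any future directed causal curve from $x$ of length $>T_{0}$ is by itself a one-step chain from $x$ to its endpoint. (b) $\Omega$ is open, because the defining system of strict inequalities is open in $y$ once the witnessing chain is fixed. (c) $x\notin\Omega$, directly from the choice of $\varepsilon_{0},T_{0}$.

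The key step is showing $U$ is a pre attractor, i.e.\ $\overline{J^{+}_{t_{0}}(U)}\subset U$ for a suitable $t_{0}\ge T_{0}$. Take $w$ a limit of endpoints $w_{n}$ of future directed causal curves $\delta_{n}$ of length $>t_{0}$ starting at some $y_{n}\in U$; choose $z_{n}\in\Omega$ close enough to $y_{n}$ and concatenate the chain witnessing $z_{n}\in\Omega$ with $\delta_{n}$. Thanks to the calibrated choice of $\varepsilon_{0}$ and the factor $\tfrac14$ in the definition of $U$, the mismatch between $\varepsilon_{0}(z_{n})$, $\varepsilon_{0}(y_{n})$ and $\varepsilon_{0}(\gamma_{k}(1))$ is absorbed, and the concatenated sequence is still a legitimate chain from $x$ to $w_{n}$, so $w_{n}\in\Omega$. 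Using that the fattening in $U$ is strict, $w\in U$. Then by construction $A=\bigcap_{t\ge t_{0}}\overline{J^{+}_{t}(U)}$ is the attractor. From (a) and the inclusion $\overline{\Omega}\subset U$ one gets $\overline{J^{+}_{t}(x)}\subset U$ for every $t>T_{0}$ large enough (invoking Theorem \ref{theo:continuity_future} to control the closure), hence $x\in B(A,U)\subset B(A)$.

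It remains to show $x\notin A$. If $x\in A$ then for every $s>0$ there are arbitrarily long future directed causal curves starting in $\Omega$ whose endpoints are arbitrarily close to $x$. Picking one such curve $\delta$ of length $>T_{0}$ starting at $y\in\Omega$ with $d(\delta(1),x)<\varepsilon_{0}(x)$ and prepending the chain witnessing $y\in\Omega$ gives an $(\varepsilon_{0},T_{0})$-chain from $x$ to $x$, contradicting the choice of $\varepsilon_{0},T_{0}$; so $x\notin A$. The main obstacle throughout is the asymmetric evaluation of $\varepsilon_{0}$ in the definition of a chain, which is what forces the preliminary calibration of $\varepsilon_{0}$ and the slack built into $U$; once this bookkeeping is set up correctly, each step is a direct translation of Conley's argument into the Lorentzian language developed in the preceding sections.
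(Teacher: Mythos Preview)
Your approach is essentially the paper's (and Conley's): take as pre attractor the set of points $(\varepsilon_0,T_0)$-chain-reachable from $x$, check it is open and a pre attractor, and conclude $x\in B(A,U)\setminus A$. The paper simply sets $U=\Omega$ (the chain-reachable set is already open, as you note in (b)), which spares the fattening and yields $x\notin A$ in one line from $A\subset U$ and $x\notin U$; your fattening is harmless but superfluous, and note that in your final paragraph the curve $\delta$ starts in $U$, not in $\Omega$, so you still need to pass through a nearby $z\in\Omega$ exactly as in your pre attractor step.
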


\begin{proof} Since $x\notin R(g)$, let us consider $\e \in \PP(M)$ and $T>0$ such that there is no $(\e,T)$-chain from $x$ to $x$. Let $U$ be the set of points $y\in M$ such that there is an $(\e,T)$-chain from $x$ to $y$. It follows from the definition of $(\e,T)$-chains that $U$ is open. We will start by proving that $U$ is a pre attractor.\\
\indent Let $y\in \overline{J^+_T(U)}$. Let us consider a function $\delta \in \PP(M)$ such that $\delta \le \frac{\e}{2}$ and that $d(y,z)<\delta (z)$ implies $\e(z)>\frac{\e(y)}{2}$ (the existence of such a function, which can be seen as continuous continuity modulus of $\e$, is proved in \cite{Choi:2002}). Let $z\in B(y,\delta(y))\cap J^+_T(U)$. We can write $z=\gamma(1)$ where $\gamma$ is a future curve with length at least $T$ and such that $\gamma(0)\in U$. Since there is an $(\e,T)$-chain $\gamma_1,\dots,\gamma_k$ from $x$ to $\gamma(0)$ (by definition of $U$), the choice of the function $\delta$ was made in such a way that $\gamma_1,\dots,\gamma_k,\gamma$ is an $(\e,T)$-chain from $x$ to $y$, therefore $y\in U$.\\
\indent If $\gamma$ is a future curve of length at least $T$ such that $\gamma(0)=x$, then $\gamma(1)\in U$ (because $\gamma$ itself is an $(\e,T)$-chain). Therefore $J^+_T(x)\subset U$, and $x\in B(A,U)$. \\
\indent Since there is no $(\e,T)$-chain from $x$ to $x$, we know that $x\notin U$, but $A\subset U$, hence $x\notin A$. We have shown that $x\in B(A,U)\setminus A\subset B(A)\setminus A$.
\end{proof}

By combining this result and Corollary \ref{all_attractors}, we obtain the first result mentioned in this paper:\\

\noindent \textbf{Theorem \ref{chain_recurrent_partial_time}.} \emph{Let $(M,g)$ be a spacetime. There is a $M\setminus R(g)$-time function. Particularly, if $R(g)=\emptyset$, then there is a time function.}\\

As mentioned earlier, chain recurrence is an obstruction to the existence of a time function, therefore the last statement of this theorem is an equivalence.

\begin{theo} \label{time_no_chains} Let $(M,g)$ be a spacetime that admits a time function. Then for any $T>0$ there is a function $\e \in \PP(M)$ such that there is no $(\e,T)$-chain with same end points, and therefore $R(g)=\emptyset$.
\end{theo}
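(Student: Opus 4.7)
The strategy is to exploit the strict increase of $\tau$ along future causal curves to produce, for each $T>0$, a continuous positive function $\e$ so small that the jumps of an $(\e,T)$-chain cannot compensate for the accumulated $\tau$-increase along curves of length at least $T$. Fix an adapted Riemannian metric $h$ (Proposition \ref{adapted_exists}). For every $x\in M$ with $\overline{J^+_{T,2T}(x)}\ne\emptyset$, set
$$m(x) = \min\bigl\{\tau(y)-\tau(x) : y\in \overline{J^+_{T,2T}(x)}\bigr\}.$$
Strict monotonicity of $\tau$ along future causal curves, extended to limit curves via Proposition \ref{long_curve}, combined with causality (implied by the existence of $\tau$, so $x\notin\overline{J^+_{T,2T}(x)}$) and compactness of the slice, gives $m(x)>0$. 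Theorem \ref{theo:continuity_future} together with continuity of $\tau$ implies that $m$ is continuous on its finite locus. A length-parameterization argument (splitting into lengths in $[T,2T]$ and lengths $\ge 2T$, the latter passing through a point at length $\tfrac{3T}{2}$) shows that every future causal curve of length $\ge T$ starting at $y$ increases $\tau$ by at least $m(y)$.

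Next, by a partition-of-unity construction, replace $m$ with a continuous positive function $\tilde m : M\to(0,+\infty)$ satisfying $\tilde m\le m$ (extending to points where $m=+\infty$). Then choose $\e\in\PP(M)$ so that, whenever $d(x,y)\le\e(x)$, the two conditions (i) $|\tau(y)-\tau(x)|\le\tfrac{1}{6}\tilde m(x)$ and (ii) $\tilde m(y)\ge\tfrac{1}{2}\tilde m(x)$ both hold. Each constraint defines a lower semi-continuous positive upper bound on $\e(x)$ (using continuity of $\tau$ and of $\tilde m$), and any such bound dominates some continuous positive function by paracompactness.

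Assume for contradiction that $(\gamma_1,\dots,\gamma_k)$ is an $(\e,T)$-chain from $p$ to $p$. Write $y_i=\gamma_i(0)$, $z_i=\gamma_i(1)$, and $C_i=\tau(z_i)-\tau(y_i)\ge\tilde m(y_i)>0$. Telescoping $\tau(p)-\tau(p)=0$ around the loop decomposes into the $k$ curve-increases $C_i$ and the $k+1$ jump-differences $\Delta_j$, so $\sum_i C_i=-\sum_j \Delta_j\le\sum_j|\Delta_j|$. For each intermediate jump $z_i\to y_{i+1}$, (i) at $x=z_i$ gives $|\Delta_i|\le\tfrac{1}{6}\tilde m(z_i)$, and (ii) gives $\tilde m(z_i)\le 2\tilde m(y_{i+1})\le 2C_{i+1}$, so $|\Delta_i|\le\tfrac{1}{3}C_{i+1}$. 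The two boundary jumps at $p$ satisfy $|\Delta_0|,|\Delta_k|\le\tfrac{1}{6}\tilde m(p)$, and since (ii) yields $\tilde m(y_1)\ge\tfrac{1}{2}\tilde m(p)$, hence $C_1\ge\tfrac{1}{2}\tilde m(p)$, these are $\le\tfrac{1}{3}C_1$. Summing gives
$$\sum_{i=1}^k C_i \le \tfrac{1}{3}\sum_{i=1}^k C_i + \tfrac{1}{3}C_1,$$
whence $\sum C_i\le\tfrac{1}{2}C_1$, contradicting $\sum C_i\ge C_1>0$. Therefore no $(\e,T)$-chain has coinciding endpoints, and in particular $R(g)=\emptyset$.

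The main obstacle is producing a single continuous positive $\e$ simultaneously satisfying (i) and (ii) globally, especially near the locus where $m=+\infty$. The construction of the auxiliary $\tilde m$ and the verification that it glues compatibly with the partition-of-unity choice of $\e$ require some technical bookkeeping, and depend crucially on Theorem \ref{theo:continuity_future} to ensure the continuity of $m$ wherever it is finite.
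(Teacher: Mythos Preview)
Your proof is correct and follows essentially the same strategy as the paper's: bound from below the $\tau$-increase along each curve segment of length $\ge T$, choose $\e$ so small that each jump changes $\tau$ by only a fixed fraction of that bound, and telescope to a contradiction for a closed chain. The paper organises the lower bound via a compact exhaustion $M=\bigcup_n K_n$ and the quantities $\alpha_{K_n,T}=\inf\{f(y)-f(x):y\in K_n,\ y\in J^+_T(x)\}$ (positivity coming from Proposition~\ref{length_limit}) rather than through your pointwise $m(x)=\min_{\overline{J^+_{T,2T}(x)}}(\tau-\tau(x))$ and the continuity of $\overline{J^+_{T,2T}}$ from Theorem~\ref{theo:continuity_future}, but the two implementations are parallel; incidentally, since $h$ is complete every future causal curve extends to arbitrary $h$-length, so $\overline{J^+_{T,2T}(x)}$ is never empty and your $m$ is finite everywhere, which removes the ``$m=+\infty$'' worry you flag at the end.
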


\begin{proof} Let $f$ be a time function. If $K\subset M$ is compact and $T>0$ let $\alpha_{K,T}=\inf \{ \vert f(y)-f(x) \vert / y\in K \textrm{ and } y\in J^+_T(x) \}$.\\
\indent Since $h$ is adapted to $g$, we have $\overline{J^+_T(x)}\subset J^+_{T/ \sqrt 2}(x)$, which shows that $\alpha_{K,T}>0$.\\
\indent Let us fix $x_0\in M$ and write $M=\bigcup_{n\in \N} K_n$ where $K_n=\overline{B}(x_0,nT)$. For $x\in M$ and $T>0$, we will denote by $n(x)$ the smallest integer $n$ such that $x\in \mathring{K}_n$.\\
\indent We will construct a function $\e \in \PP(M)$ such that for all $x\in M$ and for all $y\in B(x,\e(x))$, we have $f(y)\le f(x)+\frac{1}{2}\alpha_{K_{n(x)},T}$.\\

\indent Let us consider $x\in M$ and $U_x$ a relatively compact open neighborhood of $x$ that lies in $\mathring{K}_{n(x)}$. For $y\in U_x$, we have $n(y)\le n(x)$ hence $\alpha_{K_{n(y)}} \ge \alpha _{K_{n(x)}}$. The compactness of  $\overline{U_x}$ and the continuity of $f$ assure the existence of  $\delta_x>0$ such that for all $y\in U_x$ and $z\in B(y,\delta_x)$, we have $f(z) \le f(y) + \frac{\alpha_{K_{n(x)}}}{2} \le f(y) + \frac{\alpha_{K_{n(y)}}}{2}$. From the open covering $M=\bigcup_{x\in M} U_x$, we extract a locally finite covering $M = \bigcup_{i\in I} U_{x_i}$. Let $\e_i = \inf \{ \delta_j / ~U_{x_i}\cap U_{x_j} \ne \emptyset \}$ and let  $(\theta_i)_{i\in I}$ be a partition of unity subordinate to  $M = \bigcup_{i\in I} U_{x_i}$. The function $\e = \sum_{i\in I} \e_i \theta_i$ meets our requirements.\\

\indent Let $x\in M$ and consider an $(\e,T)$-chain from $x$ to $x$. It can be seen as a sequence of points $(x_1,x_2, \dots, x_k, y_1, y_2,\dots, y_{k-1})$ in $M$ such that $x_1=x=x_k$, $y_i \in J^+_T(x_i)$ and $d(y_i,x_{i+1})<\e(y_i)$.\\
\indent We have $f(x_{i+1}) \le f(y_i) + \frac{\alpha_{K_{n(y_i)}}}{2}$, but $\vert f(y_i) - f(x_i) \vert \le \alpha_{K_{n(y_i)}}$, therefore $f(x_{i+1})\le f(x_i) -  \frac{\alpha_{K_{n(y_i)}}}{2} < f(x_i)$, which implies $f(x_k) < f(x_1)$, i.e. $f(x) < f(x)$, which is absurd.\\

\indent We have shown that for any $x\in M$, there is no $(\e,T)$-chain from $x$ to $x$.

\end{proof}

\section{Time functions for stably causal spacetimes}
\label{sec:stably_causal}

Hawking's Theorem states that  a spacetime admits a time function if and only if it is stably causal. Although the important result is the existence of a time function for a stably causal spacetime, the necessity of stable causality is non trivial, and was not proved by Hawking. So far, it seems that the only available proof of this necessity is to first use Bernal and Sanchez's Theorem from \cite{BS_fonction_lisse} that shows the existence of a  temporal function, and it is easy to see that a temporal function is a time function for all close metrics. The problem in the topological case is that a time function is not necessarily a time function for close metrics (consider for example a linear function with past directed lightlike gradient in Minkowski space).\\
\indent In the previous section, we showed that the absence of chain recurrence is equivalent to the existence of a (continuous) time function. It would be interesting to a find a proof of that a time function implies stable causality without using differentiable functions, and a possibility would be to show that the absence of chain recurrence implies stable causality.\\
\indent We are now going to prove the direct sense in Hawking's Theorem: stable causality implies the existence of a time function. By using Corollary \ref{all_attractors} all we have to see is that the set $\bigcup_{A\in \mathcal{A}} B(A)\setminus A$ is the whole manifold $M$.

\begin{lemme} \label{future_pre_attractor} Let $g'\succ g$ and $x\in M$. The chronological future $U=I^+_{g'}(x)$ of $x$ for $g'$ is a pre attractor.
\end{lemme}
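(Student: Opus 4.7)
The plan is to first verify that $U = I^+_{g'}(x)$ is open (standard: chronological futures are always open in a spacetime) and then produce $t_0 > 0$ with $\overline{J^+_{g,t_0}(U)} \subset U$. The key device is an intermediate metric: since $g \prec g'$, interpolation gives $g''$ with $g \prec g'' \prec g'$ (for instance $g''_s = (1-s)g + sg'$ for small $s > 0$), so that every nonzero $g$-causal vector is $g''$-timelike and every $g''$-timelike vector is $g'$-timelike. This lets me upgrade $g$-causal curves of positive length into strictly $g'$-timelike ones.

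Fix any $t_0 > 0$ and take $q \in \overline{J^+_{g,t_0}(U)}$, with witnessing sequences $p_n \in U$, $q_n \to q$, and $g$-causal curves $\gamma_n$ from $p_n$ to $q_n$ of $h$-length $\ell_n > t_0$. The main difficulty is that $\ell_n$ may be unbounded, so $p_n$ can escape to infinity and no limit-curve argument applies directly at $p_n$. I would handle this by truncation: let $r_n$ be the point on $\gamma_n$ at $h$-arclength $t_0$ before $q_n$. The initial portion of $\gamma_n$ from $p_n$ to $r_n$ has length $\ell_n - t_0 \geq 0$, so either $r_n = p_n \in U$, or this portion is a nonconstant $g$-causal curve, hence $g''$-timelike, giving $r_n \in I^+_{g''}(p_n) \subset I^+_{g'}(p_n) \subset U$. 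In either case $r_n \in U$, and the bound $d_h(r_n, q_n) \leq t_0$ together with completeness of $h$ lets me extract a subsequence with $r_n \to r$.

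Next, I would apply the limit-curve lemma from Section \ref{sec:adapted_metrics} to the arclength-parameterized truncations $\gamma'_n$ (all of the same $h$-length $t_0$) to extract a subsequence converging uniformly to a future directed causal curve from $r$ to $q$. Proposition \ref{long_curve} then yields an honest $g$-causal curve $\eta$ from $r$ to $q$ of length close to $t_0$, in particular positive. Being nonconstant and $g$-causal, $\eta$ is $g''$-timelike and hence $g'$-timelike, so $q \in I^+_{g'}(r)$.

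To finish, I exploit openness of $I^-_{g'}(q)$: since $r_n \to r$ and $r \in I^-_{g'}(q)$, for $n$ large we have $r_n \in I^-_{g'}(q)$. Concatenating a $g'$-timelike curve from $x$ to $r_n$ (witnessing $r_n \in U$) with one from $r_n$ to $q$ produces a $g'$-timelike curve from $x$ to $q$, giving $q \in I^+_{g'}(x) = U$. This establishes $\overline{J^+_{g,t_0}(U)} \subset U$, so $U$ is a pre attractor. The only real obstacle is the unboundedness of curve lengths, and the truncation argument overcomes it by shifting attention from the potentially escaping $p_n$ to the bounded intermediate points $r_n$.
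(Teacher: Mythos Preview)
Your proof is correct and follows essentially the same route as the paper's: truncate the curves at fixed $h$-arclength from the endpoint, observe the truncation points lie in $U$, pass to a limit curve, use that a nonconstant $g$-causal curve is $g'$-timelike to get $q\in I^+_{g'}(r)$, and finish via openness of $I^-_{g'}(q)$. The paper simply takes $t_0=1$, does not introduce an intermediate metric $g''$ (unnecessary, since $g\prec g'$ already makes every nonzero $g$-causal vector $g'$-timelike, and the push-up $J^+_{g'}(I^+_{g'}(x))\subset I^+_{g'}(x)$ suffices for $r_n\in U$), and skips the appeal to Proposition~\ref{long_curve}: the limit curve itself already has positive length by Proposition~\ref{length_limit}, so it is nonconstant and hence $g'$-timelike.
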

\begin{proof} We will show that  $\overline{J^+_1(U)}\subset U$. If $y\in \overline{J^+_1(U)}$, we can find a sequence $\gamma_k$ of past directed causal curves (for $g$), with unit speed (for $h$)  such that $\gamma_k(0) \to y$ and $\gamma_k(t_k)\in U$ for some $t_k\ge 1$. Since a causal curve for $g$ is also causal for $g'$, we have $\gamma_k(1)\in U$. Up to a sub sequence, we can assume that $\gamma_{k/[0,1]}$ converges uniformly to a past directed causal curve $\gamma$. Let $z=\gamma(1)$. We have $z\in J^-_g(y)$, therefore $z\in I^-_{g'}(y)$. Since $I^-_{g'}(y)$ is open and $\gamma_k(1)\to z$, we have $\gamma_k(1) \in I^-_{g'}(y)$ for $k$ large enough, and $\gamma_k(1) \in U$ implies $y\in U$.
\end{proof}

Note that Lemma \ref{future_pre_attractor} is valid regardless of causality, but in general the point $x$ may lie in the attractor associated to $I^+_{g'}(x)$ (it is the case if there is a closed causal curve passing through $x$). In order to show that the point $x$ does not lie in the attractor, the right condition is strong causality.\\
\indent We will use Proposition \ref{stable_implies_strong} which ensures that stably causal spacetimes are strongly causal. In the classical proof of Hawking's Theorem, a slightly weaker notion is used (distinguishing spacetimes), which would probably also be sufficient in our situation.\\

\noindent\textbf{Proposition \ref{stable_attractors}.} \emph{Let $(M,g)$ be a stably causal spacetime. Then for all $x\in M$, there is an attractor $A$ such that $x\in B(A)\setminus A$.}\\

\begin{proof}  Let $x\in M$ and let $g'\succ g$ be a strongly causal metric (this is possible because $g$ is stably causal, see Proposition \ref{stable_implies_strong}). Let $W$ be a neighborhood of $x$ in $U$ such that the intersection of any $g'$-causal curve with $W$ is connected, small enough to satisfy Proposition \ref{short_curves} (there is an upper bound on the length of $g$-causal curves in $W$), and let us consider $z\in I^-_g(x)\cap W$. Then $U=I^+_{g'}(z)$ is a pre attractor, and $x\in U\subset B(A,U)$, where $A$ is the attractor associated to $U$. \\
\indent Let us show that $x \notin A$.  If it were the case, then for all $t>0$ there would be a $g$-causal future curve $\gamma_t$ of length at least $t$ such that $\gamma_t(0)\in U$ and $d(\gamma_t(1),x)\le 1/t$. Since $\gamma_t(0)\in U=I^+_{g'}(z)$, we can consider a $g'$-causal future curve $\eta_t$ such that $\eta_t(0)=z$ and $\eta_t(1)=\gamma_t(1)$ (the concatenation of a $g'$-timelike curve from $z$ to $\gamma_t(0)$ and of $\gamma_t$). If $t$ is large enough, then $\eta_t(1)\in W$, and therefore $\eta_t(s)\in W$ for all $s\in [0,1]$, hence $\gamma_t(s)\in W$ for all $s\in [0,1]$, but this is impossible because $\ell_h(\gamma_t)\to +\infty$. Therefore $x\notin A$.
\end{proof}

 To complete the proof of the direct sense in  Hawking's Theorem, notice that according to Proposition \ref{stable_attractors}, the union $\bigcup_{A\in \mathcal A} B(A)\setminus A$, where $\mathcal A$ is the set of attractors, is equal to the whole manifold $M$.  Corollary \ref{all_attractors} implies that there is time function.\\
 \indent Finally, the proof of Theorem \ref{stable_no_chains} follows by adding in Theorem \ref{time_no_chains}.

~\\
\footnotesize \textsc{Université du Luxembourg, Campus Kirchberg, 6, rue Richard Coudenhove-Kalergi, L-1359 Luxembourg}\\
 \emph{E-mail address:}  \verb|daniel.monclair@uni.lu|
\end{document}